\numberwithin{equation}{section}
\newtheorem{theorem}{Theorem}[section]  
\newtheorem*{theorem*}{Theorem}
\newtheorem{proposition}[theorem]{Proposition}
\newtheorem{corollary}[theorem]{Corollary}
\newtheorem{lemma}[theorem]{Lemma}
\theoremstyle{definition}
\newtheorem{remark}{Remark}
\newcommand{\C}{\mathbb{C}}
\newcommand{\Q}{\mathbb{Q}}
\DeclareMathOperator{\Span}{\mathrm{span}}
\newcommand{\st}{:\,}
\newcommand{\R}{\mathbb{R}}
\newcommand{\cE}{\mathcal{E}}
\renewcommand{\ker}{\mathrm{ker}\,}
\newcommand{\cL}{\mathcal{L}}
\newcommand{\Z}{\mathbb{Z}}
\newcommand{\N}{\mathbb{N}}
\newcommand{\qu}[1]{q_{[#1]}}
\newcommand{\phiu}[1]{\varphi_{[#1]}}
\newcommand{\rhou}[1]{\rho_{[#1]}}
\newcommand{\epsu}[1]{\epsilon_{[#1]}}
\newcommand{\id}{\mathrm{id}}
\newcommand{\qn}{q^*}
\newcommand{\phin}{\varphi^*}
\newcommand{\rhon}{\rho^*}
\newcommand{\bv}{\mathbf{v}}
\newcommand{\bw}{\mathbf{w}}
\newcommand{\floor}[1]{\lfloor #1 \rfloor}
\newcommand{\diag}{\mathrm{diag}\,}
\newcommand{\bmat}[1]{\begin{bmatrix}#1\end{bmatrix}}
\newcommand{\const}{\mathrm{const}}
\title{Gevrey regularity for the formally linearizable billiard of Treschev}
\begin{document}

\author{Qun Wang and Ke Zhang}

\maketitle

\begin{abstract}
  Treschev  made the remarkable discovery that there exists formal power series describing a billiard with locally linearizable dynamics. We show that if the frequency for the linear dynamics is Diophanine, the Treschev example is $(1+ \alpha)$-Gevrey for some $\alpha > 0$. Our proof is based on an iterative scheme that further clarifies the structure and symmetries underlying the original Treschev construction. Hopefully, Our result sheds a light on the more important question of whether this example is convergent.
\end{abstract}

\section{Introduction}
The Birkhoff-Poritsky conjecture (\cite{Por50}) states that the only integrable billiards are the ellipses. Several advances are made recently towards this conjecture, see \cite{ADSK16, KS18, BM22}. In these works, it is assumed that a certain part of the phase space is foliated by \emph{essential} invariant curves. It is an open question whether the analogous conjecture is true if the billiard is integrable near a periodic orbit. In this case, an open set of the phase space is foliated by \emph{contractible} invariant curves. 

Treschev (\cite{Tre13, Tre15, Tre17}) discovered a billiard whose dynamics near a period two orbit is formally linearizable. (See also \cite{Tre22} for an analogous result for Hamiltonian systems). If this example were to converge, then the local version of Birkhoff-Poritsky conjecture is false. Treschev's example is a billiard played between two mirror symmetric curved walls, with each wall symmetric with respect to the horizontal axis, see Figure \ref{fig:treschev}. To give a precise description, we use an alternative coordinate system due to Bialy and Mironov (\cite{BM22}).

\begin{figure}[ht]
 \centering 
 \includegraphics[width=3in]{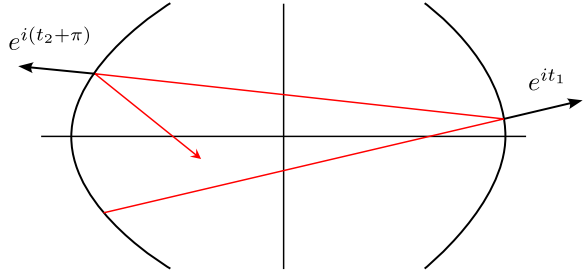} 
 \caption{Treschev's example}\label{fig:treschev}
\end{figure}

Let $\Omega$ be a strictly convex subset of $\R^2$. Define its support function by
\[
  q(\psi) = \sup \{ \langle z, e^{i\psi} \rangle \st z \in \Omega \},\quad 
  \psi \in \R/(2\pi \Z),
\]
where we did and will continue to identify $\C$ and $\R^2$ in the notations. The function
\[
  q(\psi) e^{i\psi} + i q'(\psi) e^{i\psi}
\]
provides a parametrization of $\partial \Omega$. We assume that $q(\psi)$ is equal to a real analytic function near $\psi = 0$, and admits a $(\Z_2 \times \Z_2)$-symmetry, namely
\[
q(\psi) = q(-\psi), \quad q(\pi - \psi) = q(\psi).
\]
In particular, $q$ also admits the central symmetry $q(\psi + \pi) = q(\psi)$. 

Represent a sequence of billiard trajectories by its tangent angles $(t_n)$. In these coordinates, $(0, \pi, 0, \pi, \cdots)$ corresponds to the horizontal two-periodic orbit. By properly choosing the curvature of the boundaries, one can make the two-periodic orbit elliptic, hence KAM stable. The nearby orbits alternate between $t \in (-\frac{\pi}{2}, \frac{\pi}{2})$ and $t \in (\frac{\pi}{2}, \frac{3\pi}{2})$. Due to the central symmetry of the boundary, we can identify $t$ and $t + \pi$ and represent a billiard orbit by the sequence $t_n \in (-\frac{\pi}{2}, \frac{\pi}{2})$, $n \in \Z$. Using the alternative generating function discovered by Bialy and Mironov (\cite{BM22}), we will show that  (See Lemma \ref{lem:t-var}) $(t_1, t_2, t_3)$ represents a billiard trajectory if and only if 
\begin{equation}  \label{eq:intro-billiard-eq}
  \partial_2 S(t_1, t_2) + \partial_1 S(t_2, t_3) = 0, \quad
  S(t_1, t_2) = S_q(t_1, t_2) = q\left( \frac{t_1 + t_2}{2}\right) \cos\left( \frac{t_1 - t_2}{2}\right), 
\end{equation}
where $q$ is the support function of the boundary.
We then define the billiard map $T: (t_1, t_2) \mapsto (t_2, t_3)$ by \eqref{eq:intro-billiard-eq}. By reducing the central symmetry, the horizontal orbit becomes a fixed point, i.e. $T(0, 0) = (0, 0)$.

Suppose $(0, 0)$ is a lineariazble elliptic fixed point of $T$. Then there exists a rotation $R_\theta$ and a change of variable $\Phi: \R^2 \to \R^2$ such that 
\begin{equation} \label{eq:conjugacy}
  T \circ \Phi = \Phi \circ R_\theta,
\end{equation}
where $ 
  R_\theta = \begin{pmatrix}
  \cos{\theta} & - \sin{\theta} \\
  \sin{\theta} & \cos{\theta}
  \end{pmatrix}$. Following Treschev (\cite{Tre13}), we write $\Phi = (\varphi_1, \varphi_2)$ as a function of $z, \bar{z}$, where $z, \bar{z} \in \C$. This way, $R_\theta(z,\bar{z}) = (\lambda z, \lambda^{-1} \bar{z})$ and the conjugacy equation \eqref{eq:conjugacy} can be conveniently written as 
\[
  T(\varphi_1(z,\bar{z}), \varphi_2(z,\bar{z})) 
  = \left(\phi_1\circ R_\theta(z,\bar{z}), \phi_2\circ R_\theta(z,\bar{z}) \right)
  =\left(\varphi_1(\lambda z,\lambda^{-1}\bar{z}), \varphi_2(\lambda z,\lambda^{-1}\bar{z})\right), 
\]
where $\lambda = e^{i\theta} \in \C$. Since $T(t_1, t_2) = (t_2, t_3)$, we get $\varphi_2(z,\bar{z}) = \varphi_1(\lambda z,\lambda^{-1}\bar{z})$. Plug into \eqref{eq:intro-billiard-eq}, we get 
\[
  \partial_2 S(\varphi_2(\lambda^{-1} z, \lambda \bar{z}), \varphi_2(z, \bar{z})) + \partial_1 S(\varphi_2(z, \bar{z}), \varphi_2(\lambda z, \lambda^{-1}\bar{z})) = 0. 
\]
Denote $\varphi = \varphi_2$, and write $\varphi(\lambda^{-1} z,\lambda \bar{z}) = \varphi^-(z,\bar{z})$, $\varphi(\lambda z,\lambda^{-1}\bar{z}) = \varphi^+(z,\bar{z})$, we get the following equation
\begin{equation}  \label{eq:linearizable}
  \cE(q, \varphi) := \partial_2 S(\varphi^-, \varphi) + \partial_1 S(\varphi, \varphi^+) = 0. 
\end{equation}
Henceforth
\begin{itemize}
    \item  we assume that $\displaystyle \varphi(z,\bar{z})=\sum_{ j,k=0}^{\infty}\varphi_{jk}(z^{j}\bar{z}^{k})$ is a complex formal series in $z,\bar{z}$. Then $\varphi$ represents a real function if and only if $\overline{\varphi_{jk}}=\varphi_{kj}$. In the sequel the solution that we investigate will satisfy the stronger condition $\varphi_{jk}=\varphi_{kj}$.
    \item We use the notation $O_{k}$ to represent $\displaystyle O(\sum_{\substack{\alpha\geq0, \beta \geq 0\\ \alpha+\beta=k}}|z|^\alpha |\bar{z}|^{\beta})$
\end{itemize}
 
The following theorem reformulates the result of Treschev.
\begin{theorem}[see also \cite{Tre13}]
  For each $\lambda = e^{i\theta}$ where $\theta \in \R \setminus \Q$, there exists real formal power series 
  \[
    q(t) = \sum_{k = 0}^\infty q_{2k} t^{2k}, \quad 
    \varphi(z, \bar{z}) = \sum_{n = 0}^\infty \sum_{j + k = 2n + 1} \varphi_{j, k} z^j \bar{z}^k
  \]
  where $\varphi_{j, k} = \varphi_{k, j}$, 
  solving equation \eqref{eq:linearizable} as formal power series in $z, \bar{z}$. 
\end{theorem}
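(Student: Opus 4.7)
The plan is to solve \eqref{eq:linearizable} as formal power series, constructing $\varphi$ and $q$ term by term in the total degree, inverting the linearized equation at each step and using the successive even coefficients $q_{2m}$ of $q$ as tuning parameters to kill the resonant obstruction. I would start with $\varphi^{[1]}(z,\bar z) = z + \bar z$, fix $q_0$ (a size parameter), determine $q_2$ from the degree-one part of $\cE = 0$, and then proceed by induction on $m$ to find $\varphi^{[2m+1]}$ and $q_{2m+2}$ simultaneously, where $\varphi^{[N]}$ denotes the homogeneous part of $\varphi$ of total degree $N$.

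The first step is to unpack $\cE$ at degree one. Because $q$ is even, $q'(0) = 0$, hence $\partial_1 S(0,0) = \partial_2 S(0,0) = 0$, and a direct computation with $S(t_1,t_2) = q(\tfrac{t_1+t_2}{2})\cos(\tfrac{t_1-t_2}{2})$ yields $\partial_1^2 S(0,0) = \partial_2^2 S(0,0) = A$ and $\partial_1\partial_2 S(0,0) = B$, with $A = \tfrac{q_2}{2} - \tfrac{q_0}{4}$ and $B = \tfrac{q_2}{2} + \tfrac{q_0}{4}$. The degree-one coefficient of $\cE(q,\varphi^{[1]})$ is $2A + 2B\cos\theta$, whose vanishing is the twist condition $A + B\cos\theta = 0$; this determines $q_2$ in terms of $\theta$ (after normalizing $q_0$) and gives $B \neq 0$. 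Moreover, since $S$ is an even function of $(t_1,t_2)$ and $\varphi$ has only odd total-degree terms, $\cE$ has only odd total-degree terms and no equations arise at even orders.

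For the inductive step I would assume $\varphi^{[1]},\ldots,\varphi^{[2m-1]}$ and $q_0,\ldots,q_{2m}$ have been constructed with $\cE(q,\varphi) = O_{2m+1}$, and extract the degree-$(2m+1)$ component. This produces a system
\[
  L\,\varphi^{[2m+1]} + c_{2m+2}\, q_{2m+2} + F = 0,
\]
where $F$ is computed from previously determined data, $L$ is the leading-order linearization $L\psi = B\psi^- + 2A\psi + B\psi^+$, which is diagonal in the monomial basis with eigenvalue $\Lambda_{j,k} = 2A + 2B\cos((j-k)\theta)$, and $q_{2m+2}$ enters precisely at this degree because the $q'$-factor in $\partial_i S$ first brings $q_{2\ell}$ into $\cE$ at total degree $2\ell - 1$. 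Combining the twist relation $A = -B\cos\theta$ with the irrationality of $\theta$, the eigenvalue $\Lambda_{j,k}$ vanishes if and only if $|j-k| = 1$; for $|j-k| \ge 3$, the component $\varphi^{[2m+1]}_{j,k}$ is solved uniquely, and the $z \leftrightarrow \bar z$ covariance of $\cE$ preserves the symmetry $\varphi_{j,k} = \varphi_{k,j}$. The resonant pair $(m,m+1),(m+1,m)$, identified by this symmetry, contributes a single scalar equation; after the rotational gauge fix $\varphi^{[2m+1]}_{m,m+1} = 0$, it becomes a linear equation for $q_{2m+2}$.

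The main obstacle will be to verify that the scalar coefficient $c_{2m+2}$ of $q_{2m+2}$ in the resonant equation is nonzero for every $m$, so that the induction closes. I would compute this coefficient explicitly from the $u^{2m+1}$-contribution of $q_{2m+2}$ in $\partial_i S$ (with $u = (\varphi^\pm + \varphi)/2$ evaluated on the linear part $\varphi^{[1]} = z+\bar z$), which by a direct binomial expansion reduces to a positive multiple of $(1+\cos\theta)^{m+1}$; this is nonzero provided $\theta \neq \pi \bmod 2\pi$. Granting this nondegeneracy, the induction runs indefinitely and produces the desired real formal series $q$ and $\varphi$ satisfying $\varphi_{j,k} = \varphi_{k,j}$ and solving \eqref{eq:linearizable} to all orders.
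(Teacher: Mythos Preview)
Your approach is correct and essentially reproduces Treschev's original order-by-order construction, but it differs from the route the paper takes. The paper's proof of this statement (Corollary~\ref{cor:treschev-alt}) is a byproduct of the Newton--KAM machinery developed for the main Gevrey theorem: starting from $(\qu{0},\phiu{0})$ with $\cE=O_3$, each step solves the cohomological equation $\cL_z(\Delta\varphi)=-\cE\varphi_z$ via the Levi--Moser factorization $\cL_z=\nabla^+\circ h\circ\nabla$, with $\Delta q$ chosen by the condition $[S_{q+\Delta q}(\varphi^-,\varphi)]=1$ (Proposition~\ref{prop:outer-eq}), and the order of vanishing of $\cE$ doubles at each iteration (Lemma~\ref{lem:order-inc}). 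Your scheme instead increases the order by two per step, inverting the diagonal operator $L\psi=B\psi^-+2A\psi+B\psi^+$ directly and using the single new coefficient $q_{2m+2}$ to kill the resonance at $|j-k|=1$. Your key nondegeneracy---that the resonant coefficient of $q_{2m+2}$ is a nonzero multiple of $(1+\cos\theta)^{m+1}$---plays the same role as the paper's $P_{j,j}=\binom{2j}{j}(\lambda^{-1}+1)^j(\lambda+1)^j\neq 0$ in the proof of Proposition~\ref{prop:outer-eq}; both reduce to $\lambda\neq -1$, which follows from $\lambda$ not being a root of unity. Your argument is more elementary and self-contained for the purely formal statement; the paper's iteration is heavier but is precisely what is needed to carry norm estimates through to the Gevrey bound, so the formal existence comes for free once that machinery is in place.
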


Assume the following Diophantine property for $\lambda$: there exists $1>c> 0$ and $\tau > 0$ such that 
\begin{equation}  \label{eq:diophantine}
  |\lambda^k - 1| \ge c |k|^{-\tau}, \quad
  \text{ for all } k \ne 0. 
\end{equation}
Our main theorem is that this formal solution is of Gevrey class.

\begin{theorem}\label{thm:main}
  For every $\alpha > \frac{5}{4}$, there exists $C(c,\tau) > 0$ such that
\[
  |q_{2k}| \le C^{2k} e^{\alpha (2k) \log (2k)} , \quad 
  \sup_{j + k = n} |\varphi_{j, k}| \le C^n e^{ \alpha n \log n}. 
\]
In particular, the series are of Gevrey order $1 + \alpha$. 
\end{theorem}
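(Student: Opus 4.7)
The plan is to construct and quantitatively control the formal solution $(q,\varphi)$ through an iterative scheme that respects the $\Z_2\times\Z_2$ symmetry of the problem and the parity constraint that $\varphi$ carries only odd-degree monomials while $q$ carries only even-degree coefficients. Gevrey bounds are then extracted by carefully tracking the growth of coefficients through the iteration, using the Diophantine condition \eqref{eq:diophantine} to control small-divisor losses and Cauchy-type convolution estimates to control nonlinear interactions.

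I would begin with an analysis of the linearized operator $\cL := D_\varphi\,\cE$ at a convenient base point (for example, the lowest-order approximation in which $q \approx q_0 + q_2 t^2$ is fixed by the elliptic condition and $\varphi$ is a normalized linear map). In the monomial basis $\{z^j \bar{z}^k\}$, $\cL$ is block-diagonal in the resonance index $j-k$, with symbol essentially of the form $\alpha(\lambda^{j-k}+\lambda^{-(j-k)}) + \beta$ for structural constants $\alpha,\beta$ depending on $q_0, q_2$. By \eqref{eq:diophantine}, this symbol is bounded below by $c'|j-k|^{-\tau'}$ for $j \ne k$ (with $\tau'$ proportional to $\tau$), so that $\cL^{-1}$ loses only a polynomial factor $|j-k|^{\tau'}$ on the non-resonant modes. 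The resonant direction $j = k$ contributes to $\cE$ only at even total degree, and the corresponding solvability condition is precisely what determines the next coefficient $q_{2m}$; the symmetries $\varphi_{jk}=\varphi_{kj}$ and $q(\psi) = q(-\psi) = q(\pi-\psi)$ make this decoupling consistent at every order.

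Next, I would work in Gevrey-weighted norms
\[
  \|\varphi\|_{\alpha,\rho} := \sup_{j,k}\, |\varphi_{jk}|\, \rho^{j+k} \big/ ((j+k)!)^{\alpha},
\]
with an analogous norm for $q$, and verify that $\cE$ acts analytically on these spaces: composition with the analytic generating function $S$ preserves Gevrey regularity, and the multilinear terms arising from Taylor expansion of $\partial_i S(\varphi^-,\varphi)$ are controlled by Cauchy-product estimates using the identity $(j!)^{\alpha}(k!)^{\alpha} = (n!)^{\alpha}\binom{n}{j}^{-\alpha}$ with $n=j+k$ together with uniform summability of $\sum_j \binom{n}{j}^{-\alpha}$ for $\alpha > 1$. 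At step $\nu$ of the iteration, solving the linearized equation up to a geometrically increasing truncation order $N_\nu = N_0 K^\nu$ yields a correction $(\delta q,\delta\varphi)$ whose norm is controlled by the current residual times a polynomial small-divisor factor of order $N_\nu^{\tau'}$.

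Summing the iterative corrections and translating the resulting norm estimates into coefficient bounds gives $|\varphi_{jk}| \le C^{j+k}((j+k)!)^{\alpha}$ and $|q_{2k}| \le C^{2k}((2k)!)^{\alpha}$, which is the desired Gevrey estimate. The threshold $\alpha > 5/4$ arises from balancing the geometric truncation rate $K$, the iterative convergence rate, and the cumulative Cauchy-product and small-divisor losses; the fact that $\tau$ appears only in the constant $C(c,\tau)$ reflects that the small-divisor loss is merely polynomial in $n$ and is readily absorbed by the super-polynomial Gevrey growth. The principal obstacle I anticipate is the uniform treatment of the resonant modes across the iteration: at every step one must explicitly identify the range of $\cL$ and the complementary direction absorbed by the new $q$-coefficient, while preserving reality, the symmetry $\varphi_{jk}=\varphi_{kj}$, and the parity constraints on $q$. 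A secondary technical challenge is tightening the multilinear composition estimates for iterated Fa\`a di Bruno expansions of $\partial_i S(\varphi^-,\varphi)$ so that the induction actually closes at the advertised threshold $\alpha>5/4$ rather than at some larger exponent.
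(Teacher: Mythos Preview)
Your outline captures the correct large-scale strategy---iterate, linearize, solve a cohomological equation, sum the corrections---and you are right that the Diophantine small divisors are \emph{not} what drives the Gevrey exponent: they contribute only polynomial losses, absorbed into $C(c,\tau)$. But there is a genuine gap concerning where the threshold $\alpha>\tfrac54$ actually comes from, and it is precisely the place you treat most lightly.

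The linearized operator is not simply block-diagonal with a scalar symbol. After the Levi--Moser trick (Lemma~\ref{lem:ML}), multiplying by $\varphi_z$ turns $\partial_\varphi\cE$ into the second-order difference operator $\cL_z(w)=\nabla^+(h\,\nabla(w/\varphi_z))$, where $h=\partial_{12}S(\varphi^-,\varphi)\varphi_z\varphi_z^-$. The kernels of $\nabla^+$ and $\nabla$ are spanned by $z^j\bar z^{j+1}$ and $z^{j+1}\bar z^j$ respectively, not by the diagonal $j=k$. The outer solvability condition is $\Pi_+(\cE(q^*,\varphi)\varphi_z)=0$, which by \eqref{eq:S-to-E2} is equivalent to $[S_{q^*}(\varphi^-,\varphi)]=1$. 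Solving this for $\Delta q$ is \emph{not} a scalar condition per degree: it is the infinite lower-triangular linear system \eqref{eq:inf-matrix}, with diagonal entries $P_{jj}=\binom{2j}{j}(\lambda^{-1}+1)^j(\lambda+1)^j$ decaying like $j^{-1/2}\mu^{2j}$ and off-diagonal entries controlled by powers of $\rho$. The paper shows (Proposition~\ref{prop:bound-Delta-q} and Appendix~\ref{sec:delta-q}) that inverting the truncated system $T_M$ in the weighted $\ell^1$ norm costs a factor $\sqrt M$ and, more importantly, requires the smallness condition $M^{5/2}\rho^2<1$ for the Neumann series to converge. This operator $(q,\varphi)\mapsto\Delta q$ is therefore \emph{unbounded} in any fixed analytic norm; this is the bottleneck that forces Gevrey rather than analytic regularity.

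The exponent $\tfrac54$ is then arithmetic: the constraint $M^{5/2}\rho^2<1$ must persist under the iteration $M_{n+1}\approx\tfrac32 M_n$, $\rho_{n+1}=\bar\gamma\rho_n$, which forces $\bar\gamma^2<(2/3)^{5/2}$; the Gevrey order is $\alpha=\log(1/\bar\gamma)/\log(3/2)>\tfrac54$. Your description (``balancing truncation rate, convergence rate, and Cauchy-product and small-divisor losses'') does not isolate this mechanism, and working directly in Gevrey-weighted norms does not by itself circumvent it: you would still need to bound the map $[S_q]-1\mapsto\Delta q$ in those norms, and the same $M^{5/2}$ obstruction would reappear. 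A complete proof has to confront this lower-triangular inversion explicitly.
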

\begin{remark}
The Gevrey order we obtained is independent of the parameters of the Diophantine condition, because the features of the proof that cause faster growth of coefficients have a much larger effect than the small denominators coming from the Diophantine condition. 

In fact, we expect the proof can be adapted to work under the condition
\[
  |\lambda^k - 1| > c e^{-\gamma |k|}, \quad k \ne 0
\]
with a small $\gamma$. In this case we expect the Gevrey order to depend on $\gamma$. This is not done in this paper, to avoid excessive technicality. 
\end{remark}

Our proof proceeds by a KAM-type iterative scheme. Given an initial guess $(q, \varphi)$, write
\[
   \cE(q + \Delta q, \varphi + \Delta \varphi) = \cE(q, \varphi) + \partial_q \cE(q, \varphi) (\Delta q) + \partial_\varphi \cE(q, \varphi) (\Delta \varphi) + O_2
\]
where $O_2$ is a higher order remainder. Using a Newton scheme, we need to solve the linearized equation
\[
  \partial_q \cE(q, \varphi) (\Delta q) + \partial_\varphi \cE(q, \varphi) (\Delta \varphi) 
= - \cE(q, \varphi).  
\]
Two observations allow us to simplify this equation. First of all, the functional $\cE$ is \emph{linear} in $q$, therefore 
\[
\cE(q, \varphi) +  \partial_q \cE(q, \varphi)\Delta q =\cE(q, \varphi)+ \cE(\Delta q, \varphi) = \cE(q+\Delta q, \varphi). 
\]
Secondly, using the Lagrangian setting of Levi and Moser (\cite{LeviMoserLagrangian2001}), we have
\[
  \partial_\varphi \cE(q, \varphi) (\Delta \varphi) \cdot \varphi_z = \cL_z (\Delta \varphi) + O_2,  
\]
where $\cL_z$ is a second order difference operator to be defined later. Multiplying by $\varphi_z$ and ignoring the higher order term, the linearized equation can be rewritten as
\begin{equation}  \label{eq:intro-linear-eq}
  \cL_z (\Delta \varphi) = - \cE(q + \Delta q, \varphi) \varphi_z.
\end{equation}
Roughly speaking, we solve \eqref{eq:intro-linear-eq} in two steps:
\begin{enumerate}[(1)]
  \item Define an operator $(q, \varphi) \mapsto \Delta q$, so that $\cE(q + \Delta q, \varphi)$ projects to the image of $\cL_z$.
  \item Find an approximate inverse of $\cL_z$ on its image.
\end{enumerate}

It turns out that the inverse operator in step (2) is \emph{tame} in sense of KAM theory. However, the operator in step (1) is \emph{unbounded} in the analytic norm. This unboundedness makes the KAM scheme diverge on the space of analytic functions. Yet by modifying the KAM scheme and allowing the domain of analyticity to shrink to $0$, we obtain a Gevrey estimate. The same idea is used in \cite{BL22} to prove Gevrey estimate in the context of KAM theory for conformally symplectic systems.

Here is a basic outline of the paper:
\begin{itemize}
    \item In section 2, after adapting the generating function of Bialy and Mironov \cite{BM22} into a version suitable to our setting, we apply the method of Levi-Moser \cite{LeviMoserLagrangian2001} to obtain the cohomological equation \eqref{eq:app-lin}, and decompose it into an \textit{outer part} as well as an \textit{inner part}, for further investigation. 
    \item In section 3, we solve the outer cohomological equation formally by choosing $\Delta q$ by forcing the averaged generating function to vanish, which in turn implies the invertibility of the $\nabla^{+}$ operator. 
    \item In section 4, we solve the inner cohomological equation approximately, with a prescribed order of truncation, followed by a symmetrisation of the output. Depending on whether the source of error is related to the average of generating function or not, we decompose the error into two parts for further analysis. The process of solving and estimating $\Delta q$ is technical and is presented in detail in the appendix. 
    \item In section 5, after claiming the iterative scheme, we exhibit a series of properties of our chosen norm that will facilitate our estimate of magnitude of solutions produced in each iteration. As a byproduct, we obtain an alternative proof for the existence of solution as formal series, i.e. Treschev's original result (\cite{Tre13}).
    \item In section 6, after choosing an appropriate initial point, we launch the KAM machine with all the previous preparation, which leads to the Gevrey regularity we seek.  
\end{itemize}

\section{Basic calculations and the linearized equation}

Let us first derive the generating function based on the coordinate $t$.
\begin{proposition}[\cite{BM22}]
  Let $q: \R/(2\pi \Z) \to \R$ be the support function of a strictly convex billiard domain $\Omega$. Represent the direction of each billiard trajectory by its outward normal vector $e^{i\phi}$, then $\phi_1, \phi_2, \phi_3$ represent consecutive rays of the billiard trajectories if and only if 
\begin{equation}  \label{eq:Lag-eq}
  \partial_2 S(\phi_1, \phi_2) + \partial_1 S(\phi_2, \phi_3) = 0, 
\end{equation}
where
\[
  S(\phi_1, \phi_2) = q\left( \frac{\phi_1 + \phi_2}{2}\right) \sin \left( \frac{\phi_2 - \phi_1}{2}\right), 
\]
where the variable $\phi_2 - \phi_1$ should be considered to be in $[0, 2\pi)$.
(Our choice differs from \cite{BM22} by a factor of $2$, but this does not affect the equation).  
\end{proposition}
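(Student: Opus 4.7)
My plan is to work purely geometrically in the plane and reduce the claim to a trigonometric identity. First I would parametrize the boundary using the support function: since the tangent line at outward normal angle $\psi$ is $\{z : \langle z, e^{i\psi}\rangle = q(\psi)\}$ and the boundary point is where this line touches $\partial\Omega$, requiring $\dot P(\psi)\parallel ie^{i\psi}$ pins down
\[
  P(\psi) = q(\psi)e^{i\psi} + q'(\psi)\, i e^{i\psi}.
\]
This gives the bounce points directly in terms of $q$ and $q'$.

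Next I would identify the angle $\psi$ of the bounce point between two consecutive rays with outward normals $e^{i\phi_j}$ and $e^{i\phi_{j+1}}$. A ray with outward normal $e^{i\phi}$ has direction $\pm i e^{i\phi}$, so the law of reflection — the outward normal at the boundary bisects the reversed incoming direction and the outgoing direction — yields after a short angle computation $\psi_{j,j+1} = (\phi_j + \phi_{j+1})/2$. Hence, for a trajectory $(\phi_1,\phi_2,\phi_3)$, the two successive bounce points are $P_1 = P(\psi_{12})$ and $P_2 = P(\psi_{23})$.

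The condition that the middle segment of the trajectory actually has outward normal $e^{i\phi_2}$ is $\langle P_2 - P_1, e^{i\phi_2}\rangle = 0$: this is the only geometric constraint not automatically satisfied by prescribing three normals. Using $\langle e^{i\alpha}, e^{i\beta}\rangle = \cos(\alpha-\beta)$ and $\langle ie^{i\alpha}, e^{i\beta}\rangle = \sin(\beta-\alpha)$, I would compute
\[
  \langle P(\psi), e^{i\phi_2}\rangle = q(\psi)\cos(\psi-\phi_2) - q'(\psi)\sin(\psi-\phi_2)
\]
and then evaluate at $\psi = \psi_{12}$ (so $\psi - \phi_2 = (\phi_1-\phi_2)/2$) and $\psi = \psi_{23}$ (so $\psi-\phi_2 = (\phi_3-\phi_2)/2$).

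Finally I would directly differentiate $S(\phi_1,\phi_2) = q((\phi_1+\phi_2)/2)\sin((\phi_2-\phi_1)/2)$ in $\phi_2$ and $S(\phi_2,\phi_3)$ in $\phi_2$, add them, and compare. The computation will show that
\[
  2\bigl[\partial_2 S(\phi_1,\phi_2) + \partial_1 S(\phi_2,\phi_3)\bigr] = \langle P(\psi_{12}) - P(\psi_{23}),\, e^{i\phi_2}\rangle,
\]
which is zero iff the geometric reflection/alignment condition holds, proving the equivalence. The only real obstacle is bookkeeping: the meaning of the outward normal of a chord, the bisector identity at the bounce, and the convention $\phi_2-\phi_1\in[0,2\pi)$ all have to be made consistent so that the $\sin$ carries the correct sign; once the conventions are fixed, the proof is a one-line trigonometric verification and no deeper analysis is needed.
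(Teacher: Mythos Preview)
The paper does not supply its own proof of this proposition; it is quoted from \cite{BM22} and used as a black box. Your argument is correct and in fact reproduces the Bialy--Mironov derivation: the support-function parametrization $P(\psi)=q(\psi)e^{i\psi}+q'(\psi)ie^{i\psi}$, the bisector identity $\psi_{j,j+1}=(\phi_j+\phi_{j+1})/2$ from the law of reflection, and the orthogonality condition $\langle P(\psi_{23})-P(\psi_{12}),e^{i\phi_2}\rangle=0$ for the middle chord together give exactly the identity you state, and a direct differentiation of $S$ confirms
\[
  2\bigl[\partial_2 S(\phi_1,\phi_2)+\partial_1 S(\phi_2,\phi_3)\bigr]=\langle P(\psi_{12})-P(\psi_{23}),\,e^{i\phi_2}\rangle.
\]
Your caveat about sign conventions and the branch $\phi_2-\phi_1\in[0,2\pi)$ is the only place where care is needed, and you have identified it correctly; there is no missing idea.
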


Let $(\phi_n)$ represent the rightward normal vector of a billiard trajectory that bounces between the left and right boundary, with $\phi_0 \in (-\pi, 0)$. Then there exists unique $t_n \in (-\frac{\pi}{2}, \frac{\pi}{2})$ such that
\begin{equation}  \label{eq:phi-to-t}
  (\phi_n)_{n \in \Z} = (t_n +  \frac{\pi}{2} + n\pi)_{n \in \Z}. 
\end{equation}
Note that 
\begin{align*}
    q\left(\frac{\phi_1+ \phi_2}{2}\right)= q\left(\frac{t_1+t_2}{2}\right), \sin \left(\frac{\phi_2-\phi_1}{2}\right) = \cos \left( \frac{t_1-t_2}{2}\right)
\end{align*}
Pluging into the equation \eqref{eq:Lag-eq}, we get:
\begin{lemma}\label{lem:t-var}
  Let $(t_1, t_2, t_3)$	be related to $(\phi_1, \phi_2, \phi_3)$ by \eqref{eq:phi-to-t}. Then $(t_1, t_2)$ is mapped to $(t_2, t_3)$ by the billiard map if and only if
\begin{equation}  \label{eq:billiard-eq}
  \partial_2 S(t_1, t_2) + \partial_1 S(t_2, t_3) = 0, \quad
  S(t_1, t_2) = q\left( \frac{t_1 + t_2}{2}\right) \cos\left( \frac{t_1 - t_2}{2}\right). 
\end{equation}
\end{lemma}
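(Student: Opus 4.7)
The plan is to reduce the statement to a direct substitution and then observe that the change of variables is a pure translation, so partial derivatives transform trivially. Specifically, I would verify the identity
\[
S(\phi_n, \phi_{n+1}) \;=\; q\!\left(\tfrac{\phi_n+\phi_{n+1}}{2}\right)\sin\!\left(\tfrac{\phi_{n+1}-\phi_n}{2}\right) \;=\; q\!\left(\tfrac{t_n+t_{n+1}}{2}\right)\cos\!\left(\tfrac{t_n-t_{n+1}}{2}\right)
\]
under the substitution \eqref{eq:phi-to-t}. Once this is established, since $\phi_n - t_n$ is a $t$-independent constant, the chain rule gives $\partial_{\phi_n} = \partial_{t_n}$, so the Euler--Lagrange equation \eqref{eq:Lag-eq} passes through the change of variables term by term, yielding \eqref{eq:billiard-eq}.

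To verify the identity, I would compute the two arguments separately. From \eqref{eq:phi-to-t}, $(\phi_{n+1}-\phi_n)/2 = (t_{n+1}-t_n)/2 + \pi/2$, so the phase shift $\sin(x + \pi/2) = \cos(x)$ immediately turns the sine into $\cos((t_n-t_{n+1})/2)$, accounting for the change from $\sin$ to $\cos$ in the generating function. For the $q$-argument, $(\phi_n + \phi_{n+1})/2$ equals $(t_n + t_{n+1})/2$ plus a half-integer multiple of $\pi$ that depends on $n$; by combining the $2\pi$-periodicity of $q$ with the central symmetry $q(\psi + \pi) = q(\psi)$ (noted in the introduction as a consequence of the $\Z_2 \times \Z_2$ symmetry), this extra shift drops out and the $q$-factor becomes $q((t_n+t_{n+1})/2)$ in both instances $n = 1$ and $n = 2$.

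No serious obstacle is expected: the only care needed is bookkeeping of whether the residual shift at a given step is an even or odd multiple of $\pi/2$, so that either pure $2\pi$-periodicity or the central symmetry is invoked correctly. The rest is an elementary chain-rule calculation on a translation.
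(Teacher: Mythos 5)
Your proof is correct and follows essentially the same route as the paper: substitute \eqref{eq:phi-to-t} into the generating function, observe that the shift turns $\sin$ into $\cos$ and that the shift in the $q$-argument cancels by periodicity/central symmetry, and then note that since $\phi_n - t_n$ is constant the partial derivatives transfer unchanged. You make explicit a detail the paper glosses over—that the $n=2$ shift of $(\phi_2+\phi_3)/2$ by an odd multiple of $\pi$ requires the central symmetry $q(\psi+\pi)=q(\psi)$ and not just $2\pi$-periodicity—which is a correct and worthwhile observation.
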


We will normalize $q_0 = 1$, then write $q(t) = 1 + q_2 t^2 + q_{4+}(t)$ and  $\cos(t) =: p(t) = 1 + p_2 t^2 + p_{4+}(t)$. Then
\[
  S(t_1, t_2)  = 1 + \frac14 q_2 (t_1 + t_2)^2 + \frac14 p_2(t_1 - t_2)^2 + O_4(t_1, t_2)
\]
and equation \eqref{eq:linearizable} becomes 
\[
  \cE(q, \varphi) = 
  \frac12( (q_2 - p_2) \varphi^- + 2 (q_2 + p_2) \varphi + (q_2 - p_2) \varphi^+)
  + O_3(\varphi).
\]
After normalization, we can always assume that $\varphi(z, \bar{z}) = z + \bar{z} + O_3$. Set 
\[
  \chi(x) = (q_2 - p_2) x^{-1} + 2(q_2 + p_2) + (q_2 - p_2) x, 
\]
we check that 
\[
  \cE(q, \varphi) = \frac12 \chi(\lambda^{-1}) z + \frac12 \chi(\lambda) \bar{z} + O_3.  
\]
We now choose $q_2$ such that $\chi(\lambda) = \chi(\lambda^{-1}) = 0$,  or
\begin{equation}  \label{eq:q2}
  q_2 = p_2 \frac{(\lambda - 1)^2}{(\lambda + 1)^2}.
\end{equation}
Set $\phiu{0}= z + \bar{z}$, $\qu{0}= 1 + q_2 t^2$, we get
\[
  \cE(\qu{0}, \phiu{0}) = O_3. 
\]

As mentioned in the introduction, we will device a KAM-type iterative scheme by solving the linearized equation
\begin{equation}  \label{eq:lin-eq}
  \cE(\qn, \varphi)  + \partial_\varphi \cE(\qn, \varphi)(\Delta \varphi)= 0
\end{equation}
where $\qn = q + \Delta q$. If $\phi = \phi(z, \bar{z})$, we define
\[
  \phi^-(z, \bar{z}) = \phi(\lambda^{-1} z, \lambda \bar{z}), \quad
  \phi^+(z, \bar{z}) = \phi(\lambda z, \lambda^{-1} \bar{z})
\]
and
  \[
    \nabla \phi = \phi^- - \lambda^{-1}\phi, \quad \nabla^+ \phi = \phi - \lambda \phi^+.
  \]
\begin{lemma}[See \cite{LeviMoserLagrangian2001}] \label{lem:ML}
  Let $h = \partial_{12} S(\varphi^-, \varphi) (\varphi_z)(\varphi_z)^-$, then for $w = w(z, \bar{z})$:
  \begin{equation}  \label{eq:S-varphi}
	\partial_\varphi \cE (w) \varphi_z =  \partial_z \cE \cdot w + \nabla^+(h \nabla (w/ \varphi_z)). 
  \end{equation}
\end{lemma}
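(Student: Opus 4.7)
The plan is a direct calculation: compute $\partial_\varphi \cE(w)\,\varphi_z$ and $\partial_z \cE \cdot w$ separately, take their difference, and recognize it as $\nabla^+(h\nabla(w/\varphi_z))$. First, linearizing $\cE(q,\varphi)=\partial_2 S(\varphi^-,\varphi)+\partial_1 S(\varphi,\varphi^+)$ in $\varphi$ yields
\[
\partial_\varphi \cE(w)=\partial_{12}S(\varphi^-,\varphi)\,w^- + \partial_{22}S(\varphi^-,\varphi)\,w + \partial_{11}S(\varphi,\varphi^+)\,w + \partial_{12}S(\varphi,\varphi^+)\,w^+.
\]
Separately, the chain rule together with the equivariance $\partial_z \varphi^\pm = \lambda^{\pm 1}(\varphi_z)^\pm$ gives
\[
\partial_z \cE = \lambda^{-1}\partial_{12}S(\varphi^-,\varphi)(\varphi_z)^- + \partial_{22}S(\varphi^-,\varphi)\,\varphi_z + \partial_{11}S(\varphi,\varphi^+)\,\varphi_z + \lambda\,\partial_{12}S(\varphi,\varphi^+)(\varphi_z)^+.
\]
In forming $\partial_\varphi \cE(w)\,\varphi_z - \partial_z \cE\cdot w$, the $\partial_{22}S$ and $\partial_{11}S$ contributions match up and cancel, leaving only the two cross terms involving $\partial_{12}S$.

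Next I would introduce $g:=w/\varphi_z$, so that $w=g\varphi_z$ and $w^\pm=g^\pm(\varphi_z)^\pm$. Substituting this into the two surviving cross terms regroups them as
\[
\partial_{12}S(\varphi^-,\varphi)\,\varphi_z(\varphi_z)^-(g^- - \lambda^{-1}g) \;+\; \partial_{12}S(\varphi,\varphi^+)\,\varphi_z(\varphi_z)^+(g^+ - \lambda g).
\]
The first term is exactly $h\cdot \nabla g$, since $h=\partial_{12}S(\varphi^-,\varphi)\varphi_z(\varphi_z)^-$ and $\nabla g=g^- -\lambda^{-1}g$.

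For the second term, I would verify $\lambda(h\nabla g)^+ = \partial_{12}S(\varphi,\varphi^+)\varphi_z(\varphi_z)^+(\lambda g-g^+)$, using the involutivity identities $(\varphi^-)^+=\varphi$, $((\varphi_z)^-)^+=\varphi_z$, and $(g^-)^+=g$, which all follow from the fact that the shifts $(z,\bar z)\mapsto (\lambda^{\pm 1}z,\lambda^{\mp 1}\bar z)$ are mutually inverse. Then $h^+=\partial_{12}S(\varphi,\varphi^+)(\varphi_z)^+\varphi_z$ and $(\nabla g)^+=g-\lambda^{-1}g^+$, so
\[
\nabla^+(h\nabla g)=h\nabla g - \lambda(h\nabla g)^+
\]
exactly reproduces the two-term expression above. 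Combining gives the claimed identity.

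I do not anticipate a substantive obstacle; the whole argument is a bookkeeping exercise. The only real nuisance is to keep the factors of $\lambda^{\pm 1}$ consistent when distributing the shifts across $h\nabla g$, which is the point at which the operators $\nabla$ and $\nabla^+$ emerge naturally with the correct twist. This is the analog of the classical Levi--Moser computation for standard-map-type systems, with $\Z$-translations replaced by the $\lambda$-rotation acting on $(z,\bar z)$.
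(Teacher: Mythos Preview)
Your proposal is correct and follows essentially the same approach as the paper: compute $\partial_\varphi\cE(w)$ and $\partial_z\cE$ separately, observe that the $\partial_{22}S$ and $\partial_{11}S^+$ terms cancel in the difference, and then factor the two surviving $\partial_{12}S$ cross terms to recognize $\nabla^+(h\nabla(w/\varphi_z))$. The only cosmetic difference is that you introduce $g=w/\varphi_z$ explicitly and verify the $\nabla^+$ identity by computing $(h\nabla g)^+$, whereas the paper writes both terms directly in terms of $w/\varphi_z$ and its shifts before identifying the structure.
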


Let us first record the following lemma, whose proof is straightforward.
\begin{lemma}
\[
  (\phi^-)_z = \lambda^{-1} (\phi_z)^-, \quad
  (\phi^+)_z = \lambda (\phi_z)^+, 
\]
\[
  (\phi^-)_{\bar{z}} = \lambda (\phi_z)^-, \quad
  (\phi^+)_{\bar{z}} = \lambda^{-1} (\phi_{\bar{z}})^+.
\]
\end{lemma}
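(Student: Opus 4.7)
The plan is to apply the chain rule directly to the defining identities
\[
\phi^-(z,\bar{z}) = \phi(\lambda^{-1}z,\, \lambda \bar{z}), \qquad
\phi^+(z,\bar{z}) = \phi(\lambda z,\, \lambda^{-1}\bar{z}),
\]
treating $z$ and $\bar{z}$ as independent formal indeterminates in the $(z,\bar{z})$-calculus used throughout the paper. Since $\phi^\pm$ is just $\phi$ precomposed with a diagonal linear change of variables, each partial derivative picks up exactly the corresponding diagonal coefficient, while the inner substitution is preserved.

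For the first identity I would compute
\[
\partial_z \phi^-(z,\bar{z}) = \lambda^{-1} \cdot (\phi_z)(\lambda^{-1}z, \lambda \bar{z}) = \lambda^{-1} (\phi_z)^-,
\]
where the prefactor $\lambda^{-1}$ comes from differentiating the first slot of the inner substitution. Each of the other three identities is obtained by exactly the same one-line argument: a $\partial_z$ applied to $\phi^+$ pulls a $\lambda$ out of the first slot; a $\partial_{\bar{z}}$ applied to $\phi^-$ pulls a $\lambda$ out of the second slot; and a $\partial_{\bar{z}}$ applied to $\phi^+$ pulls a $\lambda^{-1}$ out of the second slot. If a purely formal verification is preferred (which is the natural setting here since $\varphi$ is a formal series), I would instead check the claim on the monomial basis $z^j\bar z^k$: the operator $\phi \mapsto \phi^-$ acts on $z^j\bar z^k$ as multiplication by $\lambda^{k-j}$, and differentiating in $z$ or $\bar{z}$ then yields exactly the shift by $\lambda^{-1}$ or $\lambda$ claimed in the lemma.

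There is no real obstacle; the whole content is bookkeeping the exponents of $\lambda$ attached to each of the four combinations of $\pm$ and of the differentiation variable. This is exactly what the authors signal by calling the proof ``straightforward,'' and the resulting intertwining identities are what allows the operator $\nabla$ from Lemma~\ref{lem:ML} to be rewritten conveniently when one replaces $\phi$ by $\phi_z$ or $\phi_{\bar z}$ in the Levi--Moser factorization.
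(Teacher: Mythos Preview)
Your proposal is correct and matches exactly what the paper intends: the authors omit the proof entirely, stating only that it is ``straightforward,'' and your chain-rule (or equivalently monomial-by-monomial) computation is precisely that omitted verification. There is nothing to add.
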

For simplicity, $(\phi_{z})^\pm$ will be denoted $\phi_z^\pm$, and similarly for $\phi_{\bar{z}}^\pm$.

\begin{proof}[Proof of Lemma \ref{lem:ML}]
  We have
  \[
    \begin{aligned}
      \partial_\varphi \cE(w)
  &= \partial_\varphi (\partial_2 S(\varphi^-, \varphi) + \partial_1 S(\varphi, \varphi^+))(w) \\
  & = \partial_{12} S (w^-) + \partial_{22} S (w)
  + \partial_{11} S^+ (w) + \partial_{12} S^+ (w^+),
    \end{aligned}
  \]
  where we used the notational convention that $S = S(\varphi^-, \varphi)$ and $S^+ = S(\varphi, \varphi^+)$. 

  Similarly, 
  \[
    \begin{aligned}
      \partial_z \cE
  &= \partial_z (\partial_2 S(\varphi^-, \varphi) + \partial_1 S(\varphi, \varphi^+)) \\
  & = \partial_{12} S (\varphi^-)_z + \partial_{22} S  (\varphi_z) 
  + \partial_{11} S^+ (\varphi_z) + \partial_{12} S^+ (\varphi^+)_z \\
  & = \lambda^{-1} \partial_{12} S \varphi_z^- + \partial_{22} S  \varphi_z 
  + \partial_{11} S^+ \varphi_z + \lambda \partial_{12} S^+ \varphi_z^+.
    \end{aligned}
  \]
Then
  \begin{equation}  \label{eq:E-varphi-z}
    \begin{aligned}
 & \quad \partial_\varphi \cE(w) \varphi_z - 
 \partial_z \cE (w)  \notag\\
 & = \partial_{12}S \left( (w^-) \varphi_z - \lambda^{-1} w \varphi_z^-\right) 
 + \partial_{12}S^+ \left( (w^+) \varphi_z - \lambda w \varphi_z^+\right) \notag\\
 & = \partial_{12}S (\varphi_z \varphi_z^-) \left( w^-/ \varphi_z^- - \lambda^{-1} w/ \varphi_z\right)  \notag\\
 & \quad - \partial_{12}S^+ (\varphi_z \varphi_z^+) \lambda \left(  w/ \varphi_z - \lambda^{-1} w^+ /\varphi_z^+ \right) \\
 & =  \nabla^+\left( \partial_{12} S (\varphi_z \varphi_z^-) \nabla (w/ \varphi_z)\right).
    \end{aligned}
  \end{equation}
\end{proof}

Using Lemma \ref{lem:ML}, we multiply \eqref{eq:lin-eq} by $\varphi_z$, and drop the quadratically small term $\partial_z \cE (\Delta \varphi)$ to get the following equation
\begin{equation}  \label{eq:app-lin}
  \cL_z(\Delta \varphi) := \nabla^+(h \nabla(\Delta \varphi/\varphi_z)) = - \cE(\qn, \varphi) \varphi_z
\end{equation}
which we will call the \emph{cohomological equation}. (The name $\cL_z$ is related to the $z$ derivative used to derive Lemma \ref{lem:ML}). Since $\cL_z$ is a second order operator, we need to solve \eqref{eq:app-lin} in two steps
\[
\begin{aligned}
  & \nabla^+(\psi)  = - \cE(\qn, \varphi) \varphi_z, \\
  & h \nabla(\Delta\varphi/ z)  = \psi. 
\end{aligned}
\]
Let's call the first equation the \emph{outer equation} and the second the \emph{inner equation}.

\section{Solving the outer cohomological equation}\label{section:Solving_the_outer_cohomological_equation}

For $\phi = \sum_{j, k} \phi_{j, k} z^j \bar{z}^k$, we have
\[
  \nabla^+ \phi = \sum_{j, k}\phi_{j, k}(1 - \lambda^{j - k + 1}) z^j \bar{z}^k, 
\]
Similarly, 
\[
  \nabla \phi = \sum_{j, k}\phi_{j,k} (\lambda^{k - j} - \lambda^{-1}) z^j \bar{z}^k, 
\]
We define further more the kernels the above operators, namely 
\begin{align*}
    K^+ :=& \ker \nabla^+ = \Span\{ z^j \bar{z}^{j + 1} \st j \ge 0\}\\
    K :=& \ker \nabla = \Span\{ z^{j+1} \bar{z}^j \st j \ge 0\},
\end{align*}
 as well as the orthogonal complements of 
the kernels of the standard projections:
\begin{align*}
    (K^+)^{\perp} :=& (\ker \nabla^+)^c = \Span\{ z^k \bar{z}^{j} \st j-k\neq   1, j,k\geq 0\}\\
    (K)^{\perp} :=& (\ker \nabla)^c = \Span\{ z^k \bar{z}^{j} \st k-j\neq   1, j,k\geq 0\}\\
\end{align*}

The operators $\nabla^+$ and $\nabla$ are invertible on  $(K^+)^\perp$ and $K^\perp$,  respectively. The inverses are given by 
\begin{equation}  \label{eq:E}
  E^+(\phi) = \sum_{k \ne j + 1} \frac{\phi_{j, k}}{1 - \lambda^{j - k + 1}} z^j \bar{z}^k, \quad
  E(\phi) = \sum_{j \ne k + 1} \frac{\phi_{j, k}}{\lambda^{k - j} - \lambda^{-1}} z^j \bar{z}^k.
\end{equation}
Let us define
\[
  \left[\sum_{j, k} \phi_{j, k} z^j \bar{z}^k \right] = \sum_{j = 1}^\infty \phi_{j, j} z^j \bar{z}^j. 
\]
Note that if we write $\phi(z, \bar{z})$ in polar coordinates $(r, \theta)$, then
\[
  [\phi] = \frac{1}{2\pi}\int_0^{2\pi} \phi(r, \theta) d\theta. 
\]
Using this notation, the projections to $K^+$ and $K$ are given by
\[
  \Pi_+(\phi) = [z \phi]/z, \quad \Pi(\phi) = [\bar{z} \phi]/\bar{z}.
\]

The following properties of the $[\cdot]$ operator are easy to prove.
\begin{lemma}\label{lem:ave-property} Let $\kappa$, $\phi$ be power series in $z, \bar{z}$, then:
\begin{enumerate}
 \item $[\phi^+] = [\phi^-] = [\phi]$.
 \item If $[\kappa] = \kappa$, then $[\kappa \phi] = \kappa [\phi]$.
 \item $\Pi_+(\nabla^+ \phi)=\Pi(\nabla \phi)=0$
\end{enumerate}	
\end{lemma}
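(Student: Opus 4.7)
The plan is to unpack the definitions and verify each claim monomial by monomial, exploiting the key observation that both $(\cdot)^+$ and $(\cdot)^-$ act \emph{diagonally} in the $z^j \bar z^k$ basis (each $z^j \bar z^k$ is multiplied by $\lambda^{j-k}$ or $\lambda^{k-j}$), while the bracket $[\,\cdot\,]$ extracts precisely the diagonal $j = k$ component. On this diagonal, both scaling factors reduce to $\lambda^0 = 1$, so item (1) is immediate: the coefficient of $z^j \bar z^j$ in each of $\phi$, $\phi^+$, $\phi^-$ is the same $\phi_{j,j}$.

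For item (2), the hypothesis $[\kappa] = \kappa$ says $\kappa$ is supported on the diagonal, i.e.\ $\kappa = \sum_{j \ge 1} \kappa_{j,j} z^j \bar z^j$. I would then expand the Cauchy product
\[
\kappa \phi = \sum_{j \ge 1} \sum_{l, m \ge 0} \kappa_{j,j} \phi_{l,m} z^{j+l} \bar z^{j+m},
\]
and note that a diagonal monomial $z^N \bar z^N$ appears only when $l = m$; collecting those terms gives $[\kappa\phi] = \kappa [\phi]$ directly. For item (3), I would just read off the coefficient of a kernel monomial in $\nabla^+\phi$ and $\nabla\phi$ using the explicit formulas stated just above the lemma: the prefactor $1 - \lambda^{j-k+1}$ vanishes precisely when $k = j+1$, which defines $K^+$, and the prefactor $\lambda^{k-j} - \lambda^{-1}$ vanishes precisely when $j = k+1$, which defines $K$. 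Hence the projections $\Pi_+$ and $\Pi$ onto those kernels must annihilate the images of $\nabla^+$ and $\nabla$ respectively.

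There is essentially no obstacle beyond careful bookkeeping: the lemma packages three structural identities that will be used repeatedly later — the averaging $[\,\cdot\,]$ commutes with the shifts $(\cdot)^\pm$, it factors out diagonal multipliers, and the first-order operators $\nabla^\pm$ land automatically in the orthogonal complement of their own kernels. The only mild subtlety is that the sum defining $[\,\cdot\,]$ in the paper begins at $j = 1$ rather than $j = 0$, but this is automatically consistent in item (2) because $[\kappa] = \kappa$ already forces the constant term of $\kappa$ to vanish, and in items (1) and (3) the monomials of interest involve strictly positive exponents in at least one of $z, \bar z$.
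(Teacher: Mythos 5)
Your proof is correct and follows the same route as the paper: verify each identity monomial-by-monomial, noting that $(\cdot)^\pm$ multiplies $z^j\bar z^k$ by $\lambda^{\pm(j-k)}$ (so acts as the identity on the diagonal), that a diagonal monomial in $\kappa\phi$ can only arise from a diagonal contribution of $\phi$ when $\kappa$ is itself diagonal, and that the prefactors $1-\lambda^{j-k+1}$ and $\lambda^{k-j}-\lambda^{-1}$ vanish exactly on $K^+$ and $K$ respectively. One small slip in your closing remark: for item (2) the $j\ge 1$ versus $j\ge 0$ convention in $[\,\cdot\,]$ is potentially delicate because of $\phi_{0,0}$ rather than $\kappa_{0,0}$ (the terms $\kappa_{ll}\phi_{0,0}z^l\bar z^l$ with $l\ge 1$ appear in $[\kappa\phi]$ but would be missing from $\kappa[\phi]$ if the bracket omits the constant term), though this is moot since the integral formula stated right after the definition indicates $[\,\cdot\,]$ is intended as the full angular average including $j=0$.
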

\begin{proof}
\begin{enumerate}
    \item By definition of $\phi^{\pm}$, one has that 
\begin{align*}
[\phi^+] = \sum_{j=1}^{\infty} \phi_{jj} (\lambda z^j) (\lambda^{-1} \bar{z}^{j}) =      \sum_{j=1}^{\infty} \phi_{jj} z^j  \bar{z}^{j} = [\phi]\\
[\phi^-] = \sum_{j=1}^{\infty} \phi_{jj} (\lambda^{-1} z^j) (\lambda \bar{z}^{j}) =      \sum_{j=1}^{\infty} \phi_{jj} z^j  \bar{z}^{j}= [\phi]
\end{align*}
\item Let $\displaystyle \kappa = [\kappa] = \sum_{l=1}^{\infty}\kappa_{ll}z^l\bar{z}^l$, it follows that 
\begin{align*}
    [\kappa \phi] = [\sum_{l=1}^{\infty}\kappa_{ll}z^l\bar{z}^l\sum_{k,j}\phi_{k,j}z^k\bar{z}^{j}] = \sum_{p=2}^{\infty} \sum_{l+j=p} \kappa_{ll} \phi_{jj} = [\kappa][\phi]
\end{align*}
\item By definition of $\nabla^+ \phi$
\[\Pi_+(\nabla^+ \phi) =\frac{1}{z}\left[\sum_{j, k}\phi_{j, k}(1 - \lambda^{j - k + 1}) z^{j+1} \bar{z}^k\right]=0, \]

\[\Pi(\nabla \phi) =\frac{1}{\bar{z}} \left[\sum_{j, k}\phi_{j, k}(\lambda^{k-j} - \lambda^{-1}) z^{j} \bar{z}^{k+1}\right]=0
\]
\end{enumerate}

\end{proof}
The operators we defined is closely related to the symmetries of the system. Recall that $\phi_{j,k}=\phi_{k,j}$ in the formal power series $\phi$. Define
\begin{equation}  \label{eq:I}
 I(z, \bar{z}) = (\bar{z}, z), 	
\end{equation}
We have the following lemma, whose proof is straightforward.
\begin{lemma}\label{lem:phi-symmetry}
  Let $\phi = \phi(z, \bar{z})$ be a formal power series. Then:
\begin{enumerate}
 \item $(\phi^-) \circ I = (\phi \circ I)^+$, $(\phi^+) \circ I = (\phi \circ I)^-$.
 \item $(\nabla \phi) \circ I = -\lambda^{-1} \nabla^+(\phi \circ I)$, $(\nabla^+ \phi) \circ I = - \lambda \nabla(\phi \circ I)$.
 \item $E^+(\phi) \circ I = - \lambda^{-1} E(\phi \circ I)$, $E(\phi) \circ I = - \lambda E^+(\phi \circ I)$.
 \item $\phi_z \circ I = (\phi \circ I)_{\bar{z}}$, $\phi_{\bar{z}} \circ I = (\phi \circ I)_z$.
\end{enumerate}	
\end{lemma}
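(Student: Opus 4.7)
The plan is to dispatch each of the four items in turn, all reducing to direct substitution using $I(z,\bar z)=(\bar z,z)$; the lemma really is a bookkeeping statement about how the swap $z\leftrightarrow\bar z$ interacts with the rotation $(z,\bar z)\mapsto(\lambda^{\pm 1}z,\lambda^{\mp 1}\bar z)$.

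For part (1), I will just unwrap the definitions. Starting from $\phi^-(z,\bar z)=\phi(\lambda^{-1}z,\lambda\bar z)$, evaluating at $(\bar z,z)$ gives $\phi(\lambda^{-1}\bar z,\lambda z)$; on the other hand $(\phi\circ I)^+(z,\bar z)=(\phi\circ I)(\lambda z,\lambda^{-1}\bar z)=\phi(\lambda^{-1}\bar z,\lambda z)$, so the two agree. The second identity is symmetric. Part (4) is a one-line chain rule: writing $(\phi\circ I)(z,\bar z)=\phi(\bar z,z)$, the partial $\partial_z$ hits the second slot of $\phi$, giving $\phi_{\bar z}(\bar z,z)=(\phi_{\bar z})\circ I$, and likewise for $\partial_{\bar z}$.

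Part (2) follows formally from (1) combined with the definitions $\nabla\phi=\phi^--\lambda^{-1}\phi$ and $\nabla^+\phi=\phi-\lambda\phi^+$. Composing with $I$ and applying (1),
\[
(\nabla\phi)\circ I=(\phi\circ I)^+-\lambda^{-1}(\phi\circ I)=-\lambda^{-1}\bigl((\phi\circ I)-\lambda(\phi\circ I)^+\bigr)=-\lambda^{-1}\nabla^+(\phi\circ I),
\]
and the second identity is analogous.

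For part (3), I will use that $E$ and $E^+$ are right-inverses of $\nabla$ and $\nabla^+$ on the complements $K^\perp$ and $(K^+)^\perp$. The small verification to spell out is that $I$ swaps these subspaces: since $z^j\bar z^{j+1}\circ I=z^{j+1}\bar z^j$, one has $K^+\circ I=K$ and hence $(K^+)^\perp\circ I=K^\perp$, so that both $E^+(\phi)$ and $E(\phi\circ I)$ make sense. Letting $\psi=E^+(\phi)\in(K^+)^\perp$, we have $\nabla^+\psi=\phi$; composing with $I$ and using (2) yields $-\lambda\nabla(\psi\circ I)=\phi\circ I$, i.e.\ $\nabla(\psi\circ I)=-\lambda^{-1}(\phi\circ I)$. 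Since $\psi\circ I\in K^\perp$ (by the subspace observation just made), applying $E$ inverts $\nabla$ on this side and gives $\psi\circ I=-\lambda^{-1}E(\phi\circ I)$, which is the first identity of (3); the second is handled the same way.

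The only place any care is required is this subspace-matching in (3); everything else is pure substitution, so I do not anticipate any real obstacle.
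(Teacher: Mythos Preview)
Your arguments for items (1), (2), and (4) are exactly the paper's: direct substitution using the definitions of $I$, $\phi^\pm$, $\nabla$, $\nabla^+$.

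For item (3) you take a slightly different route. The paper verifies the identity by direct coefficient manipulation, writing out $E^+(\phi)\circ I$ term by term and matching it against the series for $-\lambda^{-1}E(\phi\circ I)$. Your operator-theoretic argument (invert $\nabla^+$, transport by $I$ using item (2), re-invert via $E$) is cleaner conceptually and avoids any index bookkeeping; the paper's computation is more explicit but entirely mechanical. Both are equally elementary.

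One small imprecision in your item (3): for general $\phi$ one has $\nabla^+\psi=\phi-\Pi_+(\phi)$, not $\nabla^+\psi=\phi$, since $E^+$ simply discards the $K^+$-component. This does not damage the argument, because after transporting by $I$ the extra term $\Pi_+(\phi)\circ I$ lands in $K$ and is annihilated by $E$; alternatively, since $E^+(\phi)=E^+(\phi-\Pi_+\phi)$ and you have already observed that $I$ swaps $K^+$ with $K$, you may assume $\phi\in(K^+)^\perp$ without loss of generality. Either way the conclusion stands.
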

\begin{proof}
Let $\phi(z,\bar{z})= \sum_{i,j}\phi_{ij}z^i\bar{z}^{j}$, then  
 \begin{enumerate}
 \item By definition, 
\begin{align*}
   (\phi^{-})\circ I &= \phi(\lambda^{-1}\bar{z}, \lambda z  )= (\phi\circ I)^{+} \\ 
   (\phi^{+})\circ I &= \phi(\lambda\bar{z}, \lambda^{-1}z  ) = (\phi\circ I)^{-} 
\end{align*}
\item $\nabla \phi= \phi^{-}-\lambda^{-1}\phi, \nabla^{+} \phi=\phi-\lambda \phi^{+}$. Hence 
\begin{align*}
    (\nabla \phi)\circ I &= \phi^{-}\circ I -\lambda^{-1}\phi\circ I = -\lambda^{-1}(\phi\circ I - \lambda \phi^{-}\circ I) =-\lambda^{-1}(\phi\circ I - \lambda (\phi\circ I)^{+}) \\
    &=-\lambda^{-1}\nabla^{+}(\phi\circ I)\\
    (\nabla^{+} \phi)\circ I &= \phi\circ I -\lambda\phi^{+}\circ I = -\lambda(\phi^{+}\circ I - \lambda^{-1} \phi\circ I)  = -\lambda((\phi\circ I)^{-} - \lambda^{-1} \phi\circ I) \\
    &= -\lambda \nabla(\phi\circ I)
\end{align*}
\item   $\displaystyle E^+(\phi) = \sum_{k \ne j + 1} \frac{\phi_{j, k}}{1 - \lambda^{j - k + 1}} z^j \bar{z}^k,   E(\phi) = \sum_{j \ne k + 1} \frac{\phi_{j, k}}{\lambda^{k - j} - \lambda^{-1}} z^j \bar{z}^k$. Hence 
\begin{align*}
    E^+(\phi)\circ I &= \sum_{k \ne j + 1} \frac{\phi_{j, k}}{1 - \lambda^{j - k + 1}} \bar{z}^j z^k = -\lambda^{-1} \sum_{k \ne j + 1} \frac{\phi_{j, k}}{ \lambda^{j - k } -\lambda^{-1}} \bar{z}^j z^k = -\lambda^{-1} E(\phi\circ I)\\
    E(\phi)\circ I &= \sum_{j \ne k + 1} \frac{\phi_{j, k}}{\lambda^{k - j} - \lambda^{-1}} \bar{z}^j z^k = -\lambda \sum_{k \ne j + 1} \frac{\phi_{j, k}}{1- \lambda^{j - k+1 } } \bar{z}^j z^k = -\lambda E(\phi\circ I)
\end{align*}
\item Direct calculation shows that 
\begin{align*}
    \phi_z\circ I = \sum_{j,k} j \bar{z}^{j-1} z^{k} = (\phi\circ I)_{\bar{z}} \\
    \phi_{\bar{z}} \circ I = \sum_{j,k} k \bar{z}^{j} z^{k-1} = (\phi\circ I)_{z} 
\end{align*}
 \end{enumerate}
\end{proof}
Noting that $S(t_1, t_2) = S(t_2, t_1)$, we also have:
\begin{lemma}\label{lem:ES-symmetry}
Suppose $\varphi \circ I = \varphi$, then:
\begin{enumerate}
  \item $\partial_2 S(\varphi^-, \varphi) \circ I = \partial_1 S(\varphi, \varphi^+)$, $\partial_1 S(\varphi, \varphi^{+}) \circ I = \partial_2 S(\varphi^{-}, \varphi)$.
  \item $\partial_{12} S(\varphi^-, \varphi) \circ I = \partial_{12} S(\varphi, \varphi^+)$.
  \item $\partial_{22}S(\varphi^{-},\varphi)\circ I = \partial_{11}S(\varphi,\varphi^{+}) $. 
 \item  $\cE(q, \varphi) \circ I = \cE(q, \varphi)$.
\end{enumerate}	
\end{lemma}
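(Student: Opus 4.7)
The plan is to reduce every claim to two inputs: the symmetry $S(t_1,t_2)=S(t_2,t_1)$ of the generating function, and item (1) of Lemma \ref{lem:phi-symmetry}, which together with the assumption $\varphi\circ I=\varphi$ gives the crucial identity
\[
  \varphi^- \circ I = (\varphi\circ I)^+ = \varphi^+, \qquad \varphi^+\circ I = \varphi^-.
\]
Because $I$ is an involution that acts on the arguments $(z,\bar z)$ and does not touch the $S$-slot, composition with $I$ commutes with partial differentiation in the $S$-arguments: for any smooth $F(a,b)$ of two variables one has
\[
  (F(\varphi^-,\varphi))\circ I = F(\varphi^-\circ I,\varphi\circ I) = F(\varphi^+,\varphi),
\]
and analogously for $F(\varphi,\varphi^+)$.

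For (1) I would differentiate $S(a,b)=S(b,a)$ once to get $\partial_2 S(a,b)=\partial_1 S(b,a)$ and $\partial_1 S(a,b)=\partial_2 S(b,a)$. Applying the commutation identity above with $F=\partial_2 S$ yields
\[
  \partial_2 S(\varphi^-,\varphi)\circ I = \partial_2 S(\varphi^+,\varphi) = \partial_1 S(\varphi,\varphi^+),
\]
and the second half of (1) is identical with the roles of $\varphi^\pm$ swapped (or equivalently follows by applying $I$ once more and using $I^2=\id$). For (2), differentiating $S(a,b)=S(b,a)$ twice, once in each slot, gives $\partial_{12}S(a,b)=\partial_{21}S(b,a)=\partial_{12}S(b,a)$ by equality of mixed partials; then the same commutation gives
\[
  \partial_{12}S(\varphi^-,\varphi)\circ I = \partial_{12}S(\varphi^+,\varphi) = \partial_{12}S(\varphi,\varphi^+).
\]
For (3), differentiating $S(a,b)=S(b,a)$ twice in $b$ on the left equals differentiating twice in the first argument on the right, so $\partial_{22}S(a,b)=\partial_{11}S(b,a)$, and again
\[
  \partial_{22}S(\varphi^-,\varphi)\circ I = \partial_{22}S(\varphi^+,\varphi) = \partial_{11}S(\varphi,\varphi^+).
\]
Finally, (4) follows by adding the two identities of (1):
\[
  \cE(q,\varphi)\circ I = \partial_2 S(\varphi^-,\varphi)\circ I + \partial_1 S(\varphi,\varphi^+)\circ I = \partial_1 S(\varphi,\varphi^+) + \partial_2 S(\varphi^-,\varphi) = \cE(q,\varphi).
\]

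There is no real obstacle here: the content is entirely bookkeeping with the involution $I$ and the symmetry of $S$. The only subtlety worth flagging is to check that item (1) of Lemma \ref{lem:phi-symmetry} combined with $\varphi\circ I=\varphi$ indeed produces the swap $\varphi^\pm\circ I=\varphi^\mp$ in the order needed, since otherwise the $\pm$ labels in parts (1) and (3) would come out reversed.
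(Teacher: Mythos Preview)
Your proposal is correct and follows essentially the same approach as the paper: both reduce everything to the symmetry $S(t_1,t_2)=S(t_2,t_1)$, the identity $\varphi^\pm\circ I=\varphi^\mp$ coming from Lemma~\ref{lem:phi-symmetry}(1) together with $\varphi\circ I=\varphi$, and the fact that composition with $I$ acts only on the $(z,\bar z)$ arguments. The only cosmetic difference is the order in which you apply the two ingredients (you swap $\varphi^-\circ I\to\varphi^+$ first and then use the $S$-symmetry, whereas the paper writes out $F(\varphi^-\circ I,\varphi\circ I)$ and then swaps the $S$-slots), which is immaterial.
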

\begin{proof}
Since $S(t_1, t_2) = S(t_2, t_1)$, we have $\partial_1 S(t_1, t_2) = \partial_2 S(t_2, t_1)$. Therefore
\[
  \partial_1 S(\varphi^-, \varphi) \circ I = \partial_1 S(\varphi^- \circ I, \varphi \circ I) = \partial_2 S(\varphi \circ I, \varphi^- \circ I) = \partial_2 S(\varphi \circ I,(\varphi \circ I)^+)  = \partial_2 S(\varphi, \varphi^+), 
\]
where we used item 1 of Lemma~\ref{lem:phi-symmetry} and $\varphi \circ I = \varphi$. This proves the first part of item 1. The second part follows from a symmetric computation. 

For item 2, note that $\partial_{12} S(t_1, t_2) = \partial_{21} S(t_2, t_1)$,
\[
  \partial_{12} S(\varphi^-, \varphi) \circ I 
   = \partial_{12} S(\varphi^- \circ I, \varphi \circ I) 
  = \partial_{12} S(\varphi \circ I, \varphi^- \circ I) 
   = \partial_{12} S(\varphi, \varphi^+)
\]
For item 3, again by symmetry, 
\[
\partial_{22} S(\varphi^-, \varphi) \circ I = \partial_{22} S(\varphi^-\circ I, \varphi\circ I ) =  \partial_{11} S(\varphi\circ I, \varphi^-\circ I) =  \partial_{11} S(\varphi, \varphi^{+})
\]
For item 4, recall that by definition
\[
    \cE(q, \varphi) := \partial_2 S(\varphi^-, \varphi) + \partial_1 S(\varphi, \varphi^+),
\]
Then using the previous result one sees that  
\[
    \cE(q, \varphi)\circ I = \partial_2 S(\varphi^-, \varphi)\circ I + \partial_1 S(\varphi, \varphi^+) \circ I = \partial_1 S(\phi,\phi^{+})+\partial_2 S(\phi^{-},\phi)=  \cE(q, \varphi).
\]
\end{proof}

We now solve the cohomological equation \eqref{eq:app-lin}. Note that the equation only has a solution if $-\mathcal{E}(q^{*},\phi)\phi_z$ is in $(K^+)^\perp$. On the \emph{formal} level, we can always choose $\Delta q$ so that this is the case, which is justified by the following lemma. 
\begin{proposition}\label{prop:outer-eq}
  Suppose $\varphi \circ I = \varphi$. Then there exists $\Delta q(t) = \sum_{k = 2}^\infty \eta_{2k} t^{2k}$ such that both
  \begin{equation}  \label{eq:zero-mean-outer}
    \Pi_+\left( \cE(q+\Delta q, \varphi) \varphi_z\right) = 0
  \end{equation}
  and 
  \begin{equation}  \label{eq:zero-mean-outer-dual}
	\Pi\left( \cE(q+\Delta q, \varphi) \varphi_{\bar{z}}\right) = 0
  \end{equation}
  hold.
\end{proposition}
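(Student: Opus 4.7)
The plan is to exploit the linearity of $\cE$ in $q$ together with the $I$-symmetry of the setup in order to reduce the pair \eqref{eq:zero-mean-outer}-\eqref{eq:zero-mean-outer-dual} to a single triangular linear system in the unknowns $\{\eta_{2k}\}_{k\ge 2}$, which I then solve order by order.

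First, $\cE$ is linear in $q$, so $\cE(q+\Delta q,\varphi) = \cE(q,\varphi) + \cE(\Delta q,\varphi)$. I claim \eqref{eq:zero-mean-outer} and \eqref{eq:zero-mean-outer-dual} are equivalent under $\varphi\circ I=\varphi$. Since $S(t_1,t_2)=S(t_2,t_1)$, Lemma \ref{lem:ES-symmetry}(4) gives $\cE(q',\varphi)\circ I = \cE(q',\varphi)$ for every $q'$, and Lemma \ref{lem:phi-symmetry}(4) gives $\varphi_z\circ I=\varphi_{\bar{z}}$. A short computation yields the identity $\Pi_+(\phi)\circ I = \Pi(\phi\circ I)$ for any $\phi$: both sides equal $[z\phi]/\bar{z}$, since $[z\phi]=[\bar{z}(\phi\circ I)]$ is a series of diagonal monomials and hence $I$-invariant. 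Composing \eqref{eq:zero-mean-outer} with $I$ therefore produces \eqref{eq:zero-mean-outer-dual}, so I focus on constructing $\Delta q$ satisfying \eqref{eq:zero-mean-outer}.

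I would then determine $\eta_{2k}$ inductively on $k\ge 2$. For $\Delta q=\eta_{2k}t^{2k}$ alone, a direct Taylor expansion from the definition of $S$ shows that $\cE(\eta_{2k}t^{2k},\varphi)$ starts at order $2k-1$ in $z,\bar{z}$ with leading part
\[
  \frac{k\,\eta_{2k}}{2^{2k-1}}\left[(\varphi^-+\varphi)^{2k-1} + (\varphi+\varphi^+)^{2k-1}\right],
\]
and multiplying by $\varphi_z=1+O_2$ preserves this order. Substituting $\varphi=z+\bar{z}+O_3$ and $\varphi^\pm = \lambda^{\pm 1}z+\lambda^{\mp 1}\bar{z}+O_3$, together with the identity $(1+\lambda^{-1})^{k-1}(1+\lambda)^k + (1+\lambda)^{k-1}(1+\lambda^{-1})^k = (2+\lambda+\lambda^{-1})^k$, the coefficient of the unique $K^+$ monomial $z^{k-1}\bar{z}^k$ of this order in $\cE(\eta_{2k}t^{2k},\varphi)\varphi_z$ simplifies to
\[
  \eta_{2k}\cdot\frac{k}{2^{2k-1}}\binom{2k-1}{k-1}(2+\lambda+\lambda^{-1})^k.
\]
Because $\theta\in\R\setminus\Q$ forces $\lambda\ne -1$, the factor $2+\lambda+\lambda^{-1}=(1+\lambda)(1+\lambda^{-1})$ is non-zero, so this diagonal coefficient never vanishes.

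Given $\eta_4,\ldots,\eta_{2k-2}$, the $z^{k-1}\bar{z}^k$ component of \eqref{eq:zero-mean-outer} is therefore a single linear equation in $\eta_{2k}$ with non-zero leading coefficient and uniquely solvable; iterating over $k$ yields the required $\Delta q$. The main point to verify is really only this algebraic non-vanishing. The remaining delicate point — that the $\bar{z}$-coefficient of $\cE(q,\varphi)\varphi_z$ must vanish automatically, since $\Delta q$ carries no $t^2$ term — is supplied by the invariant $\cE(q,\varphi)=O_3$ preserved by the outer Newton iteration (established by the choice \eqref{eq:q2} of $q_2$ and maintained in each subsequent step), so it is part of the context in which this proposition is invoked rather than an extra hypothesis.
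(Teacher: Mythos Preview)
Your argument is correct. The reduction of \eqref{eq:zero-mean-outer-dual} to \eqref{eq:zero-mean-outer} via the $I$-symmetry is fine, your leading-order computation of the $z^{k-1}\bar z^{k}$ coefficient is accurate (the identity $(1+\lambda^{-1})^{k-1}(1+\lambda)^{k}+(1+\lambda)^{k-1}(1+\lambda^{-1})^{k}=(2+\lambda+\lambda^{-1})^{k}$ is exactly the right simplification), and the triangular induction goes through since $\cE(\eta_{2l}t^{2l},\varphi)=O_{2l-1}$ does not touch degree $2k-1$ when $l>k$. Your remark about the $\bar z$-coefficient being handled by the ambient condition $\cE(q,\varphi)=O_3$ (equivalently, the choice \eqref{eq:q2} of $q_2$) is also correct; the paper uses the same fact implicitly.

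The paper takes a closely related but slightly different route: rather than working directly with $\Pi_+(\cE\varphi_z)=0$, it first shows (Lemma~\ref{lem:S-zero-mean}) that this condition is equivalent to $[S_{q+\Delta q}(\varphi^-,\varphi)]=1$, via the identity $z\,\Pi_+(\cE\varphi_z)=D([S])$. It then sets up the triangular system for the $(z\bar z)^j$-coefficients of $[S]$, with diagonal entries $P_{j,j}=\binom{2j}{j}(\lambda^{-1}+1)^j(\lambda+1)^j$. Your diagonal coefficient and theirs differ only by the factor $k$ coming from the operator $D$ and the binomial identity $k\binom{2k-1}{k-1}=\tfrac12\binom{2k}{k}\cdot k\cdot\ldots$ --- in any case the non-degeneracy condition is the same, $\lambda\ne -1$. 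The payoff of the paper's reformulation is downstream: the quantity $[S_{q^*}]-1$ is what is estimated quantitatively in Proposition~\ref{prop:bound-Delta-q} and what controls the error term $R_1$ in Proposition~\ref{prop:S-2nd}, so passing through $[S]=1$ sets up the language for the rest of the iteration. For the present formal statement alone, your more direct approach is equally valid.
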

Note that \eqref{eq:zero-mean-outer} and \eqref{eq:zero-mean-outer-dual} are equivalent by Lemmas \ref{lem:phi-symmetry} and \ref{lem:ES-symmetry}.
\begin{lemma}\label{lem:S-zero-mean}
  For a symmetric and normalised $\varphi$, equation \eqref{eq:zero-mean-outer} and \eqref{eq:zero-mean-outer-dual} are both equivalent to 
  \begin{equation}  \label{eq:S-zero-mean}
    [S_{\qn}(\varphi^-, \varphi)] = 1. 
\end{equation}
\end{lemma}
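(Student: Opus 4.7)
The plan is to reduce the vanishing of $\Pi_+\bigl(\cE(\qn, \varphi)\varphi_z\bigr)$ to a normalised mean condition on $S_{\qn}(\varphi^-, \varphi)$; the dual version \eqref{eq:zero-mean-outer-dual} is then automatic from the equivalence announced in the remark preceding the lemma.

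Setting $B := \partial_2 S(\varphi^-, \varphi)$ and $A^+ := \partial_1 S(\varphi, \varphi^+)$, one has $\cE(\qn, \varphi) = B + A^+$. I would first expand $\partial_z S(\varphi, \varphi^+)$ via the chain rule, using $(\varphi^+)_z = \lambda\, \varphi_z^+$, to obtain
\[
  \partial_z S(\varphi, \varphi^+) = A^+ \varphi_z + \lambda\, B^+\, \varphi_z^+,
\]
where $B^+ := \partial_2 S(\varphi, \varphi^+)$. Solving for $A^+ \varphi_z$ and adding $B\,\varphi_z$ to both sides yields the rewriting
\[
  \cE(\qn, \varphi)\,\varphi_z \;=\; \partial_z S(\varphi, \varphi^+) \;+\; \nabla^+\!\bigl(B\,\varphi_z\bigr),
\]
after using the multiplicativity $(B\varphi_z)^+ = B^+ \varphi_z^+$ and the definition $\nabla^+ X = X - \lambda X^+$. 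Since $\Pi_+ \circ \nabla^+ = 0$ by item 3 of Lemma~\ref{lem:ave-property}, applying $\Pi_+$ collapses the identity to $[z\,\cE \varphi_z] = [z\,\partial_z S(\varphi, \varphi^+)]$.

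Next, I would analyse the right-hand side. Writing $S(\varphi, \varphi^+) = \sum s_{jk} z^j \bar z^k$, a direct computation gives $[z\,\partial_z S(\varphi, \varphi^+)] = \sum_{j \ge 1} j\, s_{jj}\, z^j\bar z^j$, whose vanishing is equivalent to $s_{jj} = 0$ for every $j \ge 1$, i.e.\ to $[S(\varphi, \varphi^+)] = s_{00}$. The constant term is $s_{00} = S_{\qn}(\varphi(0,0), \varphi^+(0,0)) = S_{\qn}(0,0) = \qn(0)\cos 0 = 1$, since $\varphi(0,0) = 0$ by the normalisation $\varphi = z + \bar z + O_3$ and $\Delta q$ has no terms of degree $< 4$, so that $\qn(0) = q_0 = 1$. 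Combining this with $[S(\varphi, \varphi^+)] = [S(\varphi^-, \varphi)]$ from item 1 of Lemma~\ref{lem:ave-property} gives \eqref{eq:zero-mean-outer} $\Leftrightarrow$ \eqref{eq:S-zero-mean}, and \eqref{eq:zero-mean-outer-dual} follows from the remark preceding the lemma. The only substantive step is the algebraic identity $\cE \varphi_z = \partial_z S(\varphi, \varphi^+) + \nabla^+(B\,\varphi_z)$; everything else is formal manipulation with the $[\cdot]$-calculus already developed in this section.
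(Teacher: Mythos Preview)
Your proof is correct and follows essentially the same approach as the paper. The only cosmetic difference is that you work with $S(\varphi,\varphi^+)=S^+$ and obtain the identity $\cE\varphi_z=\partial_z S^+ + \nabla^+(\partial_2 S\,\varphi_z)$, whereas the paper works with $S=S(\varphi^-,\varphi)$ and obtains $\cE\varphi_z=\partial_z S - \lambda^{-1}\nabla^+(\partial_1 S\,\varphi_z^-)$; your version then needs the extra (trivial) step $[S^+]=[S]$ at the end, but otherwise the argument---split off a $\nabla^+$ term, kill it with $\Pi_+\circ\nabla^+=0$, and read the remaining $[z\,\partial_z S]$ as $D([S])$---is identical.
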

\begin{proof}

Observe that 
\[
    \partial_2 S(\varphi^-, \varphi) \varphi_z
    = \partial_z\left( S(\varphi^-, \varphi)\right) - \lambda^{-1} \partial_1 S(\varphi^-, \varphi) \varphi^-_z,
\]
hence
\begin{equation}  \label{eq:S-to-E}
    \begin{aligned}
      \cE \varphi_z & = \partial_2 S \varphi_z + \partial_1 S^+ \varphi_z
      = \partial_z S - \lambda^{-1}(\partial_1 S \varphi_z^- - \lambda \partial_1 S^+ \varphi_z) \\
                    & = \partial_z S - \lambda^{-1} \nabla^+ (\partial_1 S \varphi_z^-).
    \end{aligned}
\end{equation}
Similarly, we have
\begin{equation}  \label{eq:S-to-E-bar}
  \cE \varphi_{\bar{z}} = \partial_{\bar{z}}S^+ +  \nabla^+ (\partial_2 S^+ \varphi_{\bar{z}}^+).
\end{equation}
This allows us to verify a different condition which implies \eqref{eq:zero-mean-outer}.

Let $f = \displaystyle \sum_{n = 0}^\infty f_n (z \bar{z})^n$, define the operator
\begin{equation}  \label{eq:D}
  D(f) = \sum_{n = 1}^\infty n f_n (z \bar{z})^n.
\end{equation}
Then 
\begin{equation}  \label{eq:zS}
  [z \partial_z S] = D([S]).
\end{equation}
In particular $[z \partial_z S] = 0$ if and only if $[S] = \const$. Moreover, since $S(\varphi^-, \varphi) = 1 + O_2(z, \bar{z})$, $[S] = \const$ is the same as $[S] = 1$.
Let 
\begin{equation}  \label{eq:D-bar}
  \bar{D}(f) = \bar{D}(f - f_0) = \sum_{n = 1}^\infty \frac{f_n}{n} (z \bar{z})^n 	
\end{equation}
then $ \bar{D}(D(f))=f-f_0$. Moreover, by \eqref{eq:zS} and lemma \ref{lem:ave-property},
\begin{align} \label{eq:D-proj}
   D([S])&=[z\partial_z S]= [z\mathcal{E}\phi_z+ \lambda^{-1}z\nabla^+(\partial_1S\phi^-_z)] =z\Pi_+(\mathcal{E}\varphi_z)+\lambda^{-1}\Pi_+(\nabla^+(\partial_1 S\varphi^-_z) \notag\\
   &=z\Pi_+(\mathcal{E}\varphi_z)
\end{align}
It follows that 
\begin{align}  \label{eq:S-to-E2}
  [S] - 1 &= \bar{D}(D([S]))= \bar{D}(z \Pi_+(\cE \varphi_z)). 
\end{align}
The result then follows from \eqref{eq:S-to-E}.
\end{proof}
With the lemma in hand, we now prove proposition \ref{prop:outer-eq}.
\begin{proof}[Proof of Proposition \ref{prop:outer-eq}]
  It suffices to prove \eqref{eq:S-zero-mean}. Recall that
  \[
    S(t_1, t_2)  = 1 + \frac14 q_2 (t_1 + t_2)^2 + \frac14 p_2(t_1 - t_2)^2 + O_4(t_1, t_2).
  \]
  Denote by $Q(t_1, t_2)$ the quadratic part of $S$, and using $\varphi = \varphi^{(0)} + O_3 = z + \bar{z} + O_3 $, we have
  \[
    \begin{aligned}
     \  [Q(\varphi^-, \varphi)] & = [Q((\varphi^{(0)})^-, \varphi^{(0)})] + O_4 \\
                              &= \frac14\left[ q_2(\lambda^{-1} z + \lambda \bar{z}  + z + \bar{z})^2
                              + p_2  (\lambda^{-1} z + \lambda \bar{z} - z - \bar{z})^2\right] + O_4  \\
                              & = \frac14 \left( 2 q_2 (\lambda^{-1} + 1)(\lambda + 1)
                              + 2 p_2 (\lambda^{-1} - 1)(\lambda - 1)\right) z \bar{z} + O_4  \\
							  & =  \frac{\lambda^{-1}}{2} \left( 
							   q_2(\lambda + 1)^2 - p_2(\lambda - 1)^2	
							  \right)z\bar{z} + O_4 
							  = O_4, 
    \end{aligned}
  \]
  where the last step is due to \eqref{eq:q2}. We conclude that 
  \[
    [S_q(\varphi^-, \varphi) ]= O_4 
  \]
  for any $\varphi = (z + \bar{z}) + O_3$.

  Write $\Delta q = \sum_{k = 2}^\infty \eta_{2k} t^{2k}$ and denote $\xi = \varphi^- + \varphi$, $\zeta = \varphi^- - \varphi$, $\xi_0 = (\varphi^{(0)})^- + \varphi^{(0)}$,  \eqref{eq:S-zero-mean} becomes
  \begin{equation}  \label{eq:S-eq}
	\sum_{k = 2}^\infty \eta_{2k} [\xi^{2k} p(\zeta)] + [S_q] = 0.
  \end{equation}
  Suppose $[\xi^{2k} p(\zeta)] = \sum_{j = k}^\infty P_{j, k} (z \bar{z})^j$, \eqref{eq:S-eq} is equivalent to 
  \begin{equation}  \label{eq:inf-matrix}
	\sum_{k = 2}^j P_{j, k} \eta_{2k} = - [S_q]_{2j}, \quad j \ge k.
  \end{equation}
  which is an infinite, lower diagonal linear system, with the infinite coefficient matrix being
  \begin{align*}
  \begin{pmatrix}
  0 & 0 & 0 & 0 &0 &\cdots \\
  0 & P_{22} & 0 & 0&0 &\cdots \\
  0 & P_{32} & P_{33} & 0&0 &\cdots\\
  0 & P_{42} & P_{43} & P_{44} & 0 &\cdots \\
  \cdots & \cdots & \cdots & \cdots & \cdots & \cdots  
  \end{pmatrix}
  \end{align*}
  Since
  \[
	P_{j, j} = [\xi_0^{2j}]_{2j}  = \left[ ((\lambda^{-1} - 1)z + (\lambda - 1) \bar{z})^{2j}\right]_{2j} = 
    \binom{2j}{j} (\lambda^{-1} - 1)^j(\lambda - 1)^j \ne 0,
  \]
  equation \eqref{eq:inf-matrix} has a unique solution $(\eta_{2k})_{k \ge 2}$, given by the recursive formula 
  \begin{align}
    \eta_{2k} = \begin{cases}    
      \displaystyle -\frac{1}{P_{22}}[S_q]_4, & j=2\\
      \displaystyle -\frac{1}{P_{jj}}\left([S_q]_{2j} - \sum_{k=2}^{j-1}P_{j,k}\eta_{2k}\right), &j>2
      \end{cases}
  \end{align}
\end{proof}

While Lemma \ref{lem:S-zero-mean} provides a satisfactory answer on the formal level, the operator mapping $[S_q]$ to $\Delta q$ via \eqref{eq:S-zero-mean} is hopelessly unbounded, even if we use weighted norms. For the actual iteration, we will truncate equation \eqref{eq:S-zero-mean} to a sufficiently high order. This means \eqref{eq:S-zero-mean} is only approximately satisfied.

Given $M \in \N$, define the truncation operator
\begin{equation}  \label{eq:truncation}
  \Lambda_M(q) = \sum_{j \le M} q_j t^j, \quad
  \Lambda_M(\phi) = \sum_{j + k \le M} \phi_{j, k} z^j \bar{z}^k. 
\end{equation}
\begin{corollary}\label{cor:Delta-q}
  For any $M \ge 2$, there exists a unique polynomial $\Delta q = \sum_{k = 2}^M \eta_{2k} t^{2k}$, such that 
  \begin{equation}  \label{eq:Delta-q}
	\Lambda_{2M} \left( [S_{q + \Delta q}]\right) = 1.
  \end{equation}
  Moreover, if $\Lambda_{2N}([S_q] - 1) = 0$ for some $N < M$, we have 
  \[
	\Lambda_{2N} \Delta q = 0.
  \]
\end{corollary}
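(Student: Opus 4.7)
The plan is to extract the result directly from the lower-triangular recursion already established in the proof of Proposition \ref{prop:outer-eq}. Writing $\Delta q = \sum_{k=2}^{M}\eta_{2k}t^{2k}$ and expanding $[S_{q+\Delta q}]$ exactly as there, the equation $\Lambda_{2M}\bigl([S_{q+\Delta q}]\bigr)=1$ reduces to the finite system
\[
\sum_{k=2}^{j} P_{j,k}\,\eta_{2k} \;=\; -[S_q]_{2j}, \qquad j = 2,\ldots,M,
\]
i.e.\ exactly the first $M-1$ rows of the infinite system \eqref{eq:inf-matrix}. Two observations justify this reduction: first, since $\varphi = z+\bar z + O_3$, the contribution of $\eta_{2k}$ to $[S_{q+\Delta q}]$ starts at total degree $2k$, so any $\eta_{2k}$ with $k>M$ cannot affect $\Lambda_{2M}$; second, the equation at level $j=1$ is automatic, because the quadratic computation in the proof of Proposition \ref{prop:outer-eq} together with the choice \eqref{eq:q2} of $q_2$ gives $[S_q]_2 = 0$.

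The resulting system is square and lower-triangular of size $(M-1)\times(M-1)$. Since $P_{j,j} = \binom{2j}{j}(\lambda^{-1}-1)^{j}(\lambda-1)^{j}\neq 0$ for every $j\ge 2$, forward substitution produces a unique solution $(\eta_4,\ldots,\eta_{2M})$, via exactly the recursion displayed at the end of the proof of Proposition \ref{prop:outer-eq}. This simultaneously proves existence and uniqueness.

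For the second assertion, assume $\Lambda_{2N}([S_q]-1)=0$, so $[S_q]_{2j}=0$ for all $j\le N$. I would induct on $j$: at $j=2$ the equation reduces to $P_{2,2}\,\eta_4 = 0$, forcing $\eta_4 = 0$; for each $j\le N$, both the right-hand side $-[S_q]_{2j}$ and the previously computed coefficients $\eta_{2k}$ with $k<j$ vanish by the inductive hypothesis, so the diagonal equation $P_{j,j}\,\eta_{2j}=0$ forces $\eta_{2j}=0$. Hence $\Lambda_{2N}\Delta q = 0$.

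I do not foresee any genuine obstacle; the statement is essentially a bookkeeping consequence of the triangular structure already in hand. The only point meriting attention is the degree accounting that isolates the correct $(M-1)\times(M-1)$ block from the infinite system, and this is transparent from $\varphi = z + \bar z + O_3$ together with the form $S(t_1,t_2) = q\bigl(\tfrac{t_1+t_2}{2}\bigr)\cos\bigl(\tfrac{t_1-t_2}{2}\bigr)$.
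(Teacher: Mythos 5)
Your proposal is correct and takes the same route as the paper: the paper itself dispatches this corollary with the one-line remark that it follows from the proof of Proposition~\ref{prop:outer-eq} by inverting the corresponding finite lower-triangular block, which is precisely what you spell out (together with the degree bookkeeping showing $\eta_{2k}$ for $k>M$ cannot affect $\Lambda_{2M}[S]$, and the induction for the ``moreover'' part). One cosmetic note: you have inherited the paper's typo $P_{j,j}=\binom{2j}{j}(\lambda^{-1}-1)^j(\lambda-1)^j$, which should read $(\lambda^{-1}+1)^j(\lambda+1)^j$ since $\xi_0=(\lambda^{-1}+1)z+(\lambda+1)\bar z$; either way $P_{j,j}\neq 0$, so the argument is unaffected.
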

The proof of Corollary~\ref{cor:Delta-q} is the same as Proposition \ref{prop:outer-eq} except that we invert a finite matrix. 

When applying the iterative process to prove the main theorem, we will choose an appropriate $M \in \N$ for each step, then define $\Delta q$ using Corollary \ref{cor:Delta-q}. 

Set
\begin{equation}\label{eq:sol-outer-part}
   \psi = -E^+\Bigl( \cE(q^*, \varphi) \varphi_z - \Pi_+(\cE(q^*, \varphi) \varphi_z) \Bigr),   
\end{equation}

then $\psi$ solves the \emph{outer part} of \eqref{eq:app-lin} up to the error term $\Pi_+(\cE(q^*, \varphi) \varphi_z)$ (the error term can be made zero for a formal $\Delta q$). Plug into \ref{eq:app-lin}, we need to solve the \emph{inner part} of the equation
\[
  h\nabla (w/\varphi_z) = \psi,
\]
which requires $\Pi(\psi/h) = 0$. This does not hold in general. However, we will show that if $[S_{\qn}]$ is sufficiently close to $1$, $\Pi(\psi/h)$ is also small. We can then set
\[
  \nabla(w/ \varphi_z) = \psi/h - \Pi(\psi/h).
\]
This is done in the next section.

\section{Solving the inner cohomological equation}

In this section, we will show that $\Pi(\psi/h)$ is small if $[S_{\qn}] - 1$ is sufficiently small. Since the actual result of this section is quite technical, let us state a \emph{formal} version as motivation. In proposition \ref{prop:small-average-psi} below, we estimate the amplitude of $\Pi(\frac{\psi}{h})$ with $\Delta q$ constructed as in section \ref{section:Solving_the_outer_cohomological_equation} without any truncation error, which in turn gives the solution of Trechev's original series. In this case, equation \eqref{eq:sol-outer-part} is simplified to $\psi = E^+\left( \cE(\qn, \varphi) \varphi_z\right)$. 

\begin{proposition}\label{prop:small-average-psi}
  Given $(q, \varphi)$, suppose $\qn = q + \Delta q$, $\psi$ are chosen as follows:
  \[
	[S(\qn, \varphi)] = 1, \quad
	\psi = E^+\left( \cE(\qn, \varphi) \varphi_z\right). 
  \]
  Then
  \[
	\Pi\left( \frac{\psi}{h}\right) 
  \]
  is quadratically small with respect to $\cE$. 
\end{proposition}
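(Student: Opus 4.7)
The plan is to evaluate $\Pi(\psi/h)$ by first making $\psi$ explicit. Using the identity $\cE\varphi_z = \partial_z S - \lambda^{-1}\nabla^+(\partial_1 S\cdot\varphi_z^-)$ from the proof of Lemma~\ref{lem:S-zero-mean}, and the fact that $[S]=1$ gives $\Pi_+(\partial_z S)=0$ by \eqref{eq:zS}, the relation $E^+\nabla^+=\id-\Pi_+$ yields
\[
  \psi \;=\; E^+(\partial_z S) \;-\; \lambda^{-1}\bigl(\partial_1 S\cdot\varphi_z^- - \Pi_+(\partial_1 S\cdot\varphi_z^-)\bigr).
\]

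The quadratic smallness is motivated by the following leading-order picture. Lemma~\ref{lem:S-zero-mean} identifies $[S]=1$ with $\Pi_+(\cE\varphi_z)=0$, and since $\varphi_z=1+O_2$, this amounts at leading order to $\Pi_+(\cE)=0$, i.e.\ the $k=j+1$ diagonal of $\cE$ vanishes. The $I$-symmetry $\cE_{jk}=\cE_{kj}$ from Lemma~\ref{lem:ES-symmetry} then forces the $j=k+1$ diagonal of $\cE$ to vanish as well, so $\Pi(\cE)=0$ to leading order; since $h$ has nonzero constant term, this propagates to $\Pi(\psi/h)=0$ to leading order. The residual terms arise from the $O_2$ corrections in $\varphi_z-1$ and $h-h_0$, each of which in the end carries an explicit additional factor of $\cE$ via the $\Pi_+(\cE)=-\Pi_+(\cE(\varphi_z-1))$ identity that the constraint $[S]=1$ forces.

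To make this rigorous, I would substitute the billiard equation shifted by one step, $\partial_1 S(\varphi^-,\varphi)=\cE^- -\partial_2 S(\varphi^{--},\varphi^-)$, into the expression above, and combine with the chain-rule identity $\partial_z S=\partial_2 S\cdot\varphi_z+\lambda^{-1}\partial_1 S\cdot\varphi_z^-$. After telescoping through $E^+$ (using the identity $E^+(G)-\lambda^{-1}E^+(G^-)=-\lambda^{-1}G^-$ modulo $\ker\nabla^+=K^+$), all non-$\cE$ contributions cancel, and applying $\Pi$ to the quotient $\psi/h$ leaves a sum of terms that either carry an explicit factor $\cE^-$ or are $\Pi$-projections of quantities that vanish by the symmetries of Lemma~\ref{lem:phi-symmetry} and the identity $\Pi\nabla=0$ from Lemma~\ref{lem:ave-property}.

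The main obstacle will be the bookkeeping in this telescoping cancellation. The identity $E^+(G)-\lambda^{-1}E^+(G^-)=-\lambda^{-1}G^-$ holds only up to elements of $K^+$, and the resulting corrections must be matched against the $\Pi_+(\partial_1 S\cdot\varphi_z^-)$ term in $\psi$ using the dualization $E^+(\phi)\circ I=-\lambda^{-1}E(\phi\circ I)$ together with $\Pi_+(\phi)\circ I=\Pi(\phi\circ I)$, both from Lemma~\ref{lem:phi-symmetry}, and the $I$-invariance of $\cE$ from Lemma~\ref{lem:ES-symmetry}. Once this matching is carried out, what survives is an expression in which each term contains one explicit $\cE$-factor multiplying either $\varphi_z-1$ or $h-h_0$, and the second $\cE$-factor emerges from recognizing these small prefactors as themselves $\Pi_+$-projections controlled by $\cE$ through the same mean-zero identity of Lemma~\ref{lem:S-zero-mean}.
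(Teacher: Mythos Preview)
Your proposal has a genuine gap in the final step, where you claim the second $\cE$-factor emerges. You correctly identify one factor of $\cE$ through $\psi=E^+(\cE\varphi_z)$, but your mechanism for producing the second factor does not work: the prefactors $\varphi_z-1$ and $h-h_0$ are $O_2$ in the $(z,\bar z)$-order, not in the $\cE$-order. The identity $\Pi_+(\cE\varphi_z)=0$ lets you write $\Pi_+(\cE)=-\Pi_+(\cE(\varphi_z-1))$, but both sides are still \emph{linear} in $\cE$; nothing here furnishes a second $\cE$-factor. Your telescoping via $\partial_1 S=\cE^- -\partial_2 S(\varphi^{--},\varphi^-)$ also does not terminate --- it shifts the same problem back one step indefinitely --- so at best you would arrive at $\Pi(\psi/h)=O(\cE)\cdot O_2(z,\bar z)$, which is the trivial estimate.

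The paper's argument hinges on one idea that your proposal is missing. Writing $g=\varphi_{\bar z}/\varphi_z$, one has the factorization
\[
\frac{\psi}{h}=\frac{\psi(\lambda^{-1}g-\lambda g^-)}{\kappa},\qquad \kappa=\partial_{12}S\,(\lambda^{-1}\varphi_z^-\varphi_{\bar z}-\lambda\varphi_{\bar z}^-\varphi_z),
\]
together with the identity
\[
\partial_z\cE\cdot\varphi_{\bar z}-\partial_{\bar z}\cE\cdot\varphi_z=\kappa-\kappa^+,
\]
so that $\kappa-[\kappa]=\tilde E(\partial_z\cE\cdot\varphi_{\bar z}-\partial_{\bar z}\cE\cdot\varphi_z)$ is itself of order $\cE$. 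Using $[\bar z\psi g^-]=[\lambda^{-1}\bar z\psi^+ g]$ and $\nabla^+\psi=\cE\varphi_z$, the average $[\bar z\psi(\lambda^{-1}g-\lambda g^-)/[\kappa]]$ collapses to $\bar z\,\Pi(\cE\varphi_{\bar z})/(\lambda[\kappa])$, which vanishes by the dual of Lemma~\ref{lem:S-zero-mean}. Hence
\[
\Pi(\psi/h)=\frac{1}{\bar z}\Bigl[\bar z\,\psi(\lambda^{-1}g-\lambda g^-)\bigl(\kappa^{-1}-[\kappa]^{-1}\bigr)\Bigr],
\]
visibly a product of two $\cE$-sized quantities, $\psi$ and $[\kappa]-\kappa$. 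The function $\kappa$ is the Jacobian of the symplectic change of variables $(z,\bar z)\mapsto(\varphi^-,\partial_1 S)$; its approximate radial invariance when $\cE\approx 0$ is the geometric fact driving the quadratic smallness, and it has no counterpart in your approach.
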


First, let us note the following calculation:
\[
 \begin{aligned}
   \partial_z \cE \cdot \varphi_{\bar{z}} - \partial_{\bar{z}} \cE \cdot \varphi_z 
   &=\left(\lambda^{-1} \partial_{12} S \varphi_z^- + \partial_{22} S  \varphi_z 
  + \partial_{11} S^+ \varphi_z + \lambda \partial_{12} S^+ \varphi_z^+\right)\varphi_{\bar{z}}\\
  &\quad \quad -\left(\lambda \partial_{12} S \varphi_{\bar{z}}^- + \partial_{22} S  \varphi_{z} 
  + \partial_{11} S^+ \varphi_{\bar{z}} + \lambda^{-1} \partial_{12} S^+ \varphi_{\bar{z}}^+ \right)\varphi_{\bar{z}} \\
   &= \partial_{12} S\left( \lambda^{-1} \varphi_z^- \varphi_{\bar{z}}
   - \lambda \varphi_{\bar{z}}^- \varphi_z \right) -
   \partial_{12} S^+\left( 
    \lambda^{-1} \varphi_{\bar{z}}^+ \varphi_z-\lambda \varphi_z^+ \varphi_{\bar{z}} \right) \\
   &= \kappa - \kappa^+
\end{aligned}
\]
where
\begin{equation}  \label{eq:kappa}
	\kappa = \partial_{12} S\left( \lambda^{-1} \varphi_z^- \varphi_{\bar{z}}
   - \lambda \varphi_{\bar{z}}^- \varphi_z \right).
\end{equation}
For $\phi = \sum_{j, k} \phi_{j, k} z^j \bar{z}^k$, 
\[
  \phi - \phi^+ = \sum_{j \ne k} \phi_{j, k}(1 - \lambda^{j - k}) z^j \bar{z}^k,
\]
the operator
\begin{equation}  \label{eq:tilde-E}
  \tilde{E}(\phi) = \sum_{j \ne k} \phi_{j, k}/(1 - \lambda^{j - k}) z^j \bar{z}^k
\end{equation}
satisfies
\[
  \tilde{E}(\phi - \phi^+) = \phi - [\phi].
\]
It follows that 
\begin{equation}  \label{eq:non-res-kappa}
  \kappa - [\kappa] = \tilde{E} \left( \partial_z \cE \cdot \varphi_{\bar{z}} - \partial_{\bar{z}} \cE \cdot \varphi_z \right)
\end{equation}
has the same order as $\cE$ does. The function $\kappa$ has a geometrical meaning: it is the determinant of the Jacobian matrix of the mapping
\[
  (z, \bar{z}) \mapsto \left( \varphi^-, \partial_1 S(\varphi^-, \varphi)\right).
\]
This mapping is the symplectic version of the coordinate change $(z, \bar{z}) \mapsto (\varphi^-, \varphi)$ ($\partial_1 S(\varphi^-, \varphi)$ is the momentum variable), hence $\kappa$ is the conformal factor of our coordinate change. Our calculation suggests that if $\cE = 0$, then $\kappa$ depends only on the radial component $z \bar{z}$.

\begin{proof}[Proof of Proposition~\ref{prop:small-average-psi}]
Rewrting $h$ using $\kappa$, we get
\[
  \begin{aligned}
    \frac{\psi}{h} & = \frac{\psi}{\partial_{12} S \varphi_z^- \varphi_z}
  = \psi \frac{\lambda^{-1} \varphi_{\bar{z}}/\varphi_z - \lambda \varphi_{\bar{z}}^-/\varphi_z^-}{\kappa}.
  \end{aligned}
\]
Denote $g = \varphi_{\bar{z}}/\varphi_z$. Lemma \ref{lem:ave-property} implies that $[(\bar{z} \psi g^{-})] = [(\bar{z} \psi g^{-})^{+}] = [\lambda^{-1}\bar{z}\psi^{+}g]$, hence 
\[
  \begin{aligned}
  \left[\bar{z}  \frac{\psi (\lambda^{-1} g - \lambda g^-)}{[\kappa]}\right]
  & = \frac{1}{[\kappa]} \left( \lambda^{-1} [\bar{z} \psi g] - \lambda [\bar{z} \psi g^-]\right)
  = \frac{1}{[\kappa]} \left( \lambda^{-1} [\bar{z} \psi g] - \lambda [\lambda^{-1} \bar{z} \psi^+ g ]\right) \\
  & = \frac{1}{\lambda[\kappa]} \left( [\bar{z} \psi g] - \lambda [\bar{z} \psi^{+} g] \right) = \frac{1}{\lambda [\kappa]} \left[ \bar{z} g \nabla^+\psi\right] = \frac{1}{\lambda [\kappa]} \left[ \bar{z} \frac{\varphi_{\bar{z}}}{\varphi_z} \cE \varphi_z\right] \\
  & = \frac{\left[ \bar{z} \cE \varphi_{\bar{z}}\right]}{\lambda[\kappa]} 
  = \frac{\bar{z}}{\lambda[\kappa]} \Pi\left( \cE \varphi_{\bar{z}}\right) = 0,
  \end{aligned}
\]
where the last equality is due to the way in which $\Delta q$ is constructed and the dual version of Lemma \ref{lem:S-zero-mean}.

As a result, we get
\begin{align}
    \Pi\left( \frac{\psi}{h}\right)
  &= \frac{1}{\bar{z}} \left[ \bar{z} \frac{\psi(\lambda^{-1} g - \lambda g^-)}{\kappa}\right]\notag \\
 &=\frac{1}{\bar{z}} \left[ \bar{z} \frac{\psi(\lambda^{-1} g - \lambda g^-)}{\kappa}\right] -\frac{1}{\bar{z}} \left[ \bar{z} \frac{\psi(\lambda^{-1} g - \lambda g^-)}{[\kappa]}\right]\notag\\
  &= \frac{1}{\bar{z}} \left[ \bar{z} \psi(\lambda^{-1} g - \lambda g^-)(1/\kappa - 1/[\kappa]) \right] = O_2(\mathcal{E}),   
\end{align}
i.e., $\displaystyle \Pi\left( \frac{\psi}{h}\right)$is quadratically small in $\cE$, since $\psi = E^+(\cE \varphi_z)$ and $(1/\kappa - 1/[\kappa]) = - ([\kappa] - \kappa)/(\kappa [\kappa])$ are both of order $\cE$ (see \eqref{eq:non-res-kappa}).
\end{proof}

We now state a proposition that as an analogue of the previous result, yet the assumption $[S_{\qn}] = 1$ is dropped. It turns out the quantity $ \Pi\left( \psi/h\right)$ can be decomposed into two parts: the first part is generated due to the deviation of $[S_q]$ from 1, and the second part is again quadratically small in $\mathcal{E}$. To this end, we  we will use the notation for $N \in \N$,
\[
  f = O_{N + 1} \iff \Lambda_N (f) = 0. 
\]

\begin{proposition}\label{prop:S-2nd}
For
\[
  \psi = -E^+\left( \cE(\qn, \varphi) \varphi_z - \Pi_+(\cE(\qn, \varphi) \varphi_z)\right), 
\]
we have
\[
  \Pi\left( \psi/h\right) 
  = R_1 + R_2, 
\]
where 
\begin{align}
\label{eq:r1andr2}
R_1 = \frac{ -[\bar{z} \partial_{\bar{z}} S] + [\bar{z} g [z \partial_z S]/ z]}{ \lambda z [\kappa]}, \quad 
R_2 =  \frac{1}{z} \left[\bar{z} \psi(\lambda^{-1} g - \lambda g^-) \frac{[\kappa] - \kappa}{\kappa [\kappa]}\right]. 
\end{align}

Moreover, if 
\[
  [S_{\qn}] - 1 = O_{2M + 2}, \quad 
  \cE(\qn, \varphi) = O_{2N + 1}
\]
for some $M , N \in \N$, we have
\[
  R_1 = O_{2M + 1}, \quad R_2 = O_{4N + 1}.
\]
\end{proposition}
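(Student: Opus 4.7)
My plan is to retrace the argument of Proposition~\ref{prop:small-average-psi}, but now to keep the error terms that were killed there by the hypothesis $[S_{\qn}] = 1$. As in that proof, one starts from
\[
  \frac{\psi}{h} = \frac{\psi(\lambda^{-1} g - \lambda g^{-})}{\kappa}, \qquad g := \frac{\varphi_{\bar{z}}}{\varphi_z},
\]
and splits $1/\kappa = 1/[\kappa] + ([\kappa] - \kappa)/(\kappa[\kappa])$. After applying $\Pi$, the second summand contributes exactly $R_2$ by definition, so the real task is to identify the $1/[\kappa]$ contribution with $R_1$ and then bound both remainders in the claimed orders.

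For the $1/[\kappa]$ contribution, the chain of manipulations from Proposition~\ref{prop:small-average-psi} carries through verbatim up to
\[
  \Bigl[\bar{z}\,\tfrac{\psi(\lambda^{-1} g - \lambda g^-)}{[\kappa]}\Bigr] = \frac{1}{\lambda[\kappa]} \bigl[\bar{z}\, g\,\nabla^+ \psi\bigr].
\]
The new ingredient is that now $\nabla^{+} \psi = -\bigl(\cE \varphi_z - \Pi_+(\cE \varphi_z)\bigr)$, so the right-hand side splits into two pieces. For the first, the identity $g\varphi_z = \varphi_{\bar{z}}$ reduces it to $-[\bar{z}\,\cE\varphi_{\bar{z}}]$; by an identity dual to~\eqref{eq:S-to-E} (or equivalently by invoking the $I$-symmetry of Lemmas~\ref{lem:phi-symmetry} and~\ref{lem:ES-symmetry}) together with Lemma~\ref{lem:ave-property}(3), this collapses to $-[\bar{z}\,\partial_{\bar{z}} S] = -D([S])$. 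For the second, combining~\eqref{eq:S-to-E}, \eqref{eq:zS}, and $\Pi_+(\nabla^+(\cdot)) = 0$ gives $\Pi_+(\cE\varphi_z) = [z\partial_z S]/z$, so this contribution becomes $[\bar{z} g\,[z\partial_z S]/z]$. Assembling these pieces (and carrying through the prefactor from $\Pi = [\bar{z}\,\cdot\,]/\bar{z}$) reproduces the announced formula for $R_1$.

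The size estimates then reduce to tracking vanishing orders. Under $[S_{\qn}] - 1 = O_{2M+2}$, the operator $D$ preserves the order, so $[\bar{z}\partial_{\bar{z}} S] = D([S_{\qn}]) = O_{2M+2}$, and a direct check gives $[\bar{z} g\,[z\partial_z S]/z] = O_{2M+2}$ as well; absorbing the $\bar{z}$ from the denominator lowers the order by one to give $R_1 = O_{2M+1}$. For $R_2$, the hypothesis $\cE(\qn, \varphi) = O_{2N+1}$ gives $\psi = O_{2N+1}$ since $E^+$ is order-preserving, while \eqref{eq:non-res-kappa} combined with $\partial_z \cE,\, \partial_{\bar{z}} \cE = O_{2N}$ and the order-preservation of $\tilde{E}$ yields $\kappa - [\kappa] = O_{2N}$. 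Multiplying by the $O_0$ factors $\lambda^{-1} g - \lambda g^-$ and $1/(\kappa[\kappa])$, the integrand defining $R_2$ is $O_{4N+1}$, which propagates unchanged through the averaging operator to give $R_2 = O_{4N+1}$.

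The whole argument is algebraically routine; the only genuinely delicate point is verifying that the $[\kappa] - \kappa$ factor delivers its full $O(\cE)$ smallness so that $R_2$ comes out \emph{quadratically} rather than linearly small in $\cE$. This quadratic smallness — which reflects the fact that $\kappa$ is radial whenever $\cE = 0$, i.e., the conformal-factor interpretation of $\kappa$ mentioned after~\eqref{eq:non-res-kappa} — is exactly what will eventually permit the Newton-type iteration to converge.
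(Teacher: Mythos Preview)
Your argument is correct and follows essentially the same route as the paper: you split $1/\kappa$ into $1/[\kappa]$ and the remainder, carry the Levi--Moser manipulation through to $[\bar z\, g\,\nabla^+\psi]/(\lambda[\kappa])$, and then use $\nabla^+\psi = -(\cE\varphi_z - \Pi_+(\cE\varphi_z))$ together with the identities $[\bar z\,\cE\varphi_{\bar z}] = [\bar z\,\partial_{\bar z} S]$ and $\Pi_+(\cE\varphi_z) = [z\partial_z S]/z$ to recover $R_1$; the order count for $R_1$ and $R_2$ is exactly the paper's. The only cosmetic slip is that the statement of $R_1,R_2$ carries $1/z$ rather than $1/\bar z$ (a harmless typo in the paper, since the bracketed quantities are functions of $z\bar z$), so your phrase ``absorbing the $\bar z$ from the denominator'' is in fact the more accurate description.
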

\begin{remark}
  The $R_1$ is small if $[S]$ is close to $1$, $R_2$ is quadratically small in $\cE$.	
\end{remark}
\begin{proof}
  We repeat of the proof of Proposition~\ref{prop:small-average-psi}, this time without assuming $[S_{q + \Delta q}] = 1$. Let us recall \eqref{eq:S-to-E} and \eqref{eq:S-to-E-bar}:
  \[
	\cE \varphi_z = \partial_z S - \lambda^{-1} \nabla(\partial_1 S \varphi_z^-), \quad
	\cE \varphi_{\bar{z}} = \partial_{\bar{z}} S^+ + \nabla^+ (\partial_2 S \varphi_{\bar{z}}), 
  \]
  which implies
  \[
	[z \partial_z S] = z \Pi_+(\partial_z S) = z \Pi_+(\cE \varphi_z) = [z \cE \varphi_z], \quad
	[\bar{z} \partial_{\bar{z}} S] = [\bar{z} \cE \varphi_{\bar{z}}].
  \]
  We have
  \[
	\begin{aligned}
  \left[\bar{z}  \frac{\psi (\lambda^{-1} g - \lambda g^-)}{[\kappa]}\right]
  & = \frac{1}{\lambda [\kappa]}  \left[ \bar{z} g \nabla^+ \psi\right]
   = \frac{1}{\lambda [\kappa]}  \left[- \bar{z} g (\cE \varphi_z + \Pi_+ (\cE \varphi_z)) \right] \\
  & = \frac{1}{\lambda [\kappa]} \left(- [\bar{z} \cE \varphi_{\bar{z}}] 
   + \bar{z} g \Pi_+(\cE \varphi_z)\right)  \\
  & = \frac{1}{\lambda [\kappa]} \left(- [\bar{z} \partial_{\bar{z}} S] 
	 +[\bar{z} g [z \partial_z S]/ z]   \right) =  z R_1.
	\end{aligned}
  \]
  We have
  \[
  \Pi\left( \frac{\psi}{h}\right)
= \frac{1}{z} \left[ \bar{z}\psi(\lambda^{-1} g - \lambda g^-) ([\kappa]^{-1} + \kappa^{-1} - [\kappa]^{-1})\right]
  = R_1 + R_2. 
  \]

  We now prove the ``moreover'' part. If $[S] = O_{2M + 2}$, then $[z \partial_z S] = D([S]) = O_{2M +2}$. Noting that $\kappa$, $g$ both have nonzero constant term, we have
$R_1 = O_{2M + 1}$.

For $R_2$, we note that the operators $\Pi_+$, $E^+$ both preserve order. Therefore $\psi = O_{2N + 1}$. Since $\kappa - [\kappa]$ is related to $\cE$ after taking one derivative  (see \eqref{eq:non-res-kappa}), we get $\kappa - [\kappa] = O_{2N}$. It's then straightforward to see that $R_2 = O_{4N + 1}$. 
\end{proof}

Finally, let us summarize our approach to solve equation \eqref{eq:lin-eq}. After $\qn = q + \Delta q$ is chosen, we set
\begin{equation}  \label{eq:solution-coh}
  \psi = -E^+\left( \cE(\qn, \varphi) \varphi_z - \Pi_+(\cE(\qn, \varphi) \varphi_z)\right),  \quad
  w = \varphi_z E\left( \psi/h - \Pi(\psi/h)\right). 
\end{equation}

\begin{proposition}\label{prop:w}
  Let $w$ be chosen as in \eqref{eq:solution-coh}, then 
  \[
	\cL_z(w) + \cE(\qn, \varphi) \varphi_z =[z \partial_z S]/z - \nabla^+(h \Pi(\psi/h))
  \]
\end{proposition}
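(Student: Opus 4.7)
The plan is to unwind the definitions of $w$ and $\psi$ and use that $E$ and $E^+$ are right inverses of $\nabla$ and $\nabla^+$ on their respective ranges. First I would observe that since $\psi/h - \Pi(\psi/h) \in K^\perp$, we have
\[
  \nabla(w/\varphi_z) = \nabla E\bigl(\psi/h - \Pi(\psi/h)\bigr) = \psi/h - \Pi(\psi/h),
\]
and consequently
\[
  \cL_z(w) = \nabla^+\bigl(h\nabla(w/\varphi_z)\bigr) = \nabla^+\psi - \nabla^+\bigl(h\,\Pi(\psi/h)\bigr).
\]
Next, since $\cE(\qn,\varphi)\varphi_z - \Pi_+(\cE(\qn,\varphi)\varphi_z) \in (K^+)^\perp$, the definition of $\psi$ and the fact that $\nabla^+ E^+ = \mathrm{id}$ on $(K^+)^\perp$ yield
\[
  \nabla^+\psi = -\cE(\qn,\varphi)\varphi_z + \Pi_+\bigl(\cE(\qn,\varphi)\varphi_z\bigr).
\]

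Adding $\cE(\qn,\varphi)\varphi_z$ to both sides of the expression for $\cL_z(w)$ gives
\[
  \cL_z(w) + \cE(\qn,\varphi)\varphi_z = \Pi_+\bigl(\cE(\qn,\varphi)\varphi_z\bigr) - \nabla^+\bigl(h\,\Pi(\psi/h)\bigr).
\]
It then remains to identify $\Pi_+(\cE(\qn,\varphi)\varphi_z)$ with $[z\partial_z S]/z$. This is exactly the content of equation \eqref{eq:D-proj} derived in the proof of Lemma \ref{lem:S-zero-mean}: combined with \eqref{eq:zS}, it reads $z\,\Pi_+(\cE\varphi_z) = D([S]) = [z\partial_z S]$, so dividing by $z$ produces the desired identity.

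There is no real obstacle here since every ingredient has already been established; the only thing to be careful about is to apply $E$ and $E^+$ only to arguments lying in $K^\perp$ and $(K^+)^\perp$ respectively (which is arranged precisely by subtracting $\Pi(\psi/h)$ and $\Pi_+(\cE\varphi_z)$), and to invoke \eqref{eq:D-proj} to convert the projection of $\cE\varphi_z$ onto $K^+$ into the more convenient form $[z\partial_z S]/z$, which is the quantity that will be controlled in later estimates via the truncated equation $\Lambda_{2M}([S_{\qn}]) = 1$.
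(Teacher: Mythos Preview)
Your proof is correct and follows essentially the same route as the paper's: both unwind the definitions of $w$ and $\psi$, use that $\nabla E$ and $\nabla^+ E^+$ act as the identity on $K^\perp$ and $(K^+)^\perp$ respectively, and then invoke \eqref{eq:D-proj} to rewrite $\Pi_+(\cE\varphi_z)$ as $[z\partial_z S]/z$. Your presentation is slightly more explicit about why the subtracted projections are needed before applying $E$ and $E^+$, but the argument is the same.
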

\begin{proof}
Since $\psi$ and $w$ satisfies the two equations in \eqref{eq:solution-coh} respectively, direct computation shows that 
\begin{align*}
\cL_z(w)  +	\cE(\qn, \varphi)\varphi_z  &= \nabla^+ (h \nabla(w/\varphi_z)) + \cE(\qn, \varphi)\varphi_z \\
& =  \nabla^+ (h \nabla(E(\psi/h-\Pi(\psi/h)))) + \cE(\qn, \varphi)\varphi_z\\
&= \nabla^+ (h \left(\psi/h-\Pi(\psi/h)  \right)) + \cE(\qn, \varphi)\varphi_z\\
&=\nabla^+  \left(\psi-h\Pi(\psi/h)  \right) + \cE(\qn, \varphi)\varphi_z\\
&=-\cE(\qn, \varphi) \varphi_z + \Pi_+(\cE(\qn, \varphi) \varphi_z)-\nabla^+(h\Pi(\psi/h) +\cE(\qn, \varphi)\varphi_z\\
&= \Pi_+(\cE(\qn, \varphi) \varphi_z)-\nabla^+(h\Pi(\psi/h)\\
&=[z \partial_z S]/z - \nabla^+(h \Pi(\psi/h))
\end{align*}
\end{proof}
In the spirit of proposition \ref{prop:w}, we define the following new error terms: 
\begin{align}
    R_3 &= [z \partial_z S]/z - \nabla^+(h \Pi(\psi/h))\\
    R_4 &= \cE(\qn, \varphi) + \partial_\varphi \cE(\qn, \varphi)(w) 
\end{align}
Note that 
\begin{align*}
    \varphi_{z}R_4 &= \varphi_{z}\left(\cE(\qn, \varphi) +\partial_\varphi \cE(\qn, \varphi)(w) \right) = \varphi_z \cE(\qn, \varphi) + \partial_z \cE\cdot w + \nabla^{+}(h\nabla(w/\phi_z))\\
    &= \partial_z \cE \cdot w + R_3,
\end{align*}
in other words, the error terms $R_3$ and $R_4$ are related by the equation
 \[
R_4 = \partial_z \mathcal{E}\cdot(w/\varphi_z) + R_3/\varphi_z.
\]
While \eqref{eq:solution-coh} solves \eqref{eq:lin-eq}, we broke the symmetry of the system in that $w \circ I$ may not be equal to $w$. This can be dealt with by setting
\begin{equation}  \label{eq:symmetrize}
\Delta \varphi = \frac12(w + w \circ I).	
\end{equation}
\begin{proposition}\label{prop:dphi}
  Let $\Delta \varphi$ be given by \eqref{eq:symmetrize} and \eqref{eq:solution-coh}. Then
  \[
	\cE(\qn, \varphi) + \partial_\varphi \cE(\qn, \varphi)(\Delta \varphi)
	=  \frac12(R_4(\qn, \varphi) + R_4(\qn, \varphi) \circ I). 
  \]
\end{proposition}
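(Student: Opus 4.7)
The plan is to reduce the statement to a single symmetry property, namely that the linear operator $\partial_\varphi \cE(\qn, \varphi)$ commutes with $I$-composition on its argument:
\[
  (\partial_\varphi \cE(\qn, \varphi)(u)) \circ I = \partial_\varphi \cE(\qn, \varphi)(u \circ I)
\]
whenever $\varphi \circ I = \varphi$. Once this is in hand, expanding the right-hand side of the proposition using $R_4 = \cE + \partial_\varphi \cE(w)$, applying item 4 of Lemma~\ref{lem:ES-symmetry} to collapse $\frac12(\cE + \cE \circ I) = \cE$, and applying the commutation identity to the remaining pieces yields
\[
  \tfrac12 (R_4 + R_4 \circ I) = \cE + \tfrac12 \partial_\varphi \cE(w) + \tfrac12 \partial_\varphi \cE(w \circ I) = \cE + \partial_\varphi \cE(\Delta\varphi),
\]
where the last equality uses linearity of $\partial_\varphi \cE$ in the perturbation direction together with the definition \eqref{eq:symmetrize}.

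So the only real work is verifying the commutation identity. I would start from the explicit expression used in the proof of Lemma~\ref{lem:ML},
\[
  \partial_\varphi \cE(\qn, \varphi)(u) = \partial_{12} S \cdot u^- + \partial_{22} S \cdot u + \partial_{11} S^+ \cdot u + \partial_{12} S^+ \cdot u^+,
\]
and compose termwise with $I$. For the coefficients I use items 2 and 3 of Lemma~\ref{lem:ES-symmetry} directly ($\partial_{12} S \circ I = \partial_{12} S^+$, $\partial_{22} S \circ I = \partial_{11} S^+$), and apply the same argument with the roles of $S$ and $S^+$ swapped to obtain the two dual identities $\partial_{11} S^+ \circ I = \partial_{22} S$ and $\partial_{12} S^+ \circ I = \partial_{12} S$; these follow from $\varphi \circ I = \varphi$, item 1 of Lemma~\ref{lem:phi-symmetry}, and the symmetry $S(t_1, t_2) = S(t_2, t_1)$ (which also gives $\partial_{12}S(t_1,t_2) = \partial_{12}S(t_2,t_1)$). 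For the arguments $u, u^\pm$, item 1 of Lemma~\ref{lem:phi-symmetry} gives $u^- \circ I = (u \circ I)^+$ and $u^+ \circ I = (u \circ I)^-$. Substituting everything and relabeling the four terms produces exactly $\partial_\varphi \cE(\qn, \varphi)(u \circ I)$.

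There is no genuine obstacle here: the entire proposition is a clean consequence of the bilinearity/linearity structure combined with the $I$-symmetry bookkeeping already assembled in Lemmas~\ref{lem:phi-symmetry} and~\ref{lem:ES-symmetry}. The only place where one has to be a little careful is in reading off the two "dual" coefficient identities $\partial_{11} S^+ \circ I = \partial_{22} S$ and $\partial_{12} S^+ \circ I = \partial_{12} S$ that are not stated verbatim in Lemma~\ref{lem:ES-symmetry}, but these are immediate by applying the same computation used there with $(\varphi^-, \varphi)$ replaced by $(\varphi, \varphi^+)$.
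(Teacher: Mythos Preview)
Your proposal is correct and follows essentially the same route as the paper: both arguments hinge on the commutation identity $(\partial_\varphi \cE(\qn,\varphi)(u))\circ I = \partial_\varphi \cE(\qn,\varphi)(u\circ I)$, verified term by term from the explicit expression for $\partial_\varphi\cE$ using Lemmas~\ref{lem:phi-symmetry} and~\ref{lem:ES-symmetry}, after which the result follows from linearity, $\cE\circ I=\cE$, and the definitions of $R_4$ and $\Delta\varphi$. The paper's proof also includes an auxiliary computation showing $\cL_z(w\circ I)=\cL_{\bar z}(w)\circ I$, but this is not actually used to reach the conclusion, so your streamlined version loses nothing.
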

\begin{proof}
  Let 
  \[
	\cL_{\bar{z}}(w) = \nabla (\partial_{12} S^+ \varphi_{\bar{z}} \varphi_{\bar{z}}^+ \nabla^+(w/ \varphi_{\bar{z}})), 
  \]
  we check using Lemmas \ref{lem:phi-symmetry} and \ref{lem:ES-symmetry} that 
  \[
	\cL_z(w \circ I) = \cL_{\bar{z}}(w) \circ I.
  \]
  Indeed, since $\varphi$ is symmetric, i.e., $\varphi\circ I = \varphi$, one has that 
  \begin{align*}
     \mathcal{L}_{\bar{z}}(w)\circ I &=  \nabla (\partial_{12} S^+ \varphi_{\bar{z}} \varphi_{\bar{z}}^+ \nabla^+(w/ \varphi_{\bar{z}})) \circ I \\
     &=-\lambda^{-1}\nabla^{+}\left((\partial_{12}S^{+}\circ I)\cdot( \varphi_{\bar{z}}\circ I)\cdot( \varphi^{+}_{\bar{z}}\circ I)\cdot( \nabla^{+}(\frac{w}{\varphi_{\bar{z}}})\circ I)  \right)\\
     &=-\lambda^{-1}\nabla^{+}\left(\partial_{12}S\cdot  (\varphi \circ I)_{z}\cdot( \varphi\circ I)^{-}_{z}\cdot(-\lambda) ( \nabla(\frac{w}{\varphi_{\bar{z}}}\circ I))  \right)\\
     &=\nabla^{+}\left(\partial_{12}S\cdot  \varphi_{z} \varphi^{-}_{z}\cdot  \nabla\left(\frac{w\circ I}{\varphi_{\bar{z}}\circ I}\right)  \right)\\
     &=\nabla^{+}\left(\partial_{12}S\cdot  \varphi_{z} \varphi^{-}_{z}\cdot  \nabla\left(\frac{w\circ I}{(\varphi\circ I)_{z}}\right)  \right)\\
     &=\nabla^{+}\left(\partial_{12}S\cdot  \varphi_{z} \varphi^{-}_{z}\cdot  \nabla\left(\frac{w\circ I}{\varphi_{z}}\right)  \right)\\
     &=\mathcal{L}_z(w\circ I)
  \end{align*}
  From the definition of $\partial_\varphi \cE$,
\begin{align*}
       \partial_\varphi \cE(w)
  &  = \partial_{12} S (w^-) + \partial_{22} S (w)
  + \partial_{11} S^+ (w) + \partial_{12} S^+ (w^+),
\end{align*}
Composing with $I$ on both sides and using the symmetry $S(t_1,t_2) = S(t_2,t_1)$ leads to
\begin{align*}
    ( \partial_\varphi \cE(w))\circ I & = (\partial_{12} S\circ I) (w^-\circ I) + (\partial_{22} S\circ I)
     (w\circ I)\\
  &\quad + (\partial_{11} S^+ \circ I) (w\circ I) + (\partial_{12} S^+ \circ I) (w^+\circ I ) \\
  &=\partial_{12}S^{+}((w\circ I)^{+}) + \partial_{11}S^{+}(w\circ I) + \partial_{22}S(w\circ I) +\partial_{12}S((w\circ I )^{-}) \\
  &=\partial_{\varphi}\mathcal{E}(w\circ I ),
\end{align*}
i.e., 
  \[
	(\partial_\varphi \cE(w)) \circ I = \partial_\varphi \cE(w \circ I). 
  \]
From Proposition \ref{prop:w}, we have
\begin{equation}\label{eq:R_4_without_I}
	\cE + \partial_\varphi \cE (w \circ I) \circ I
	= \cE + \partial_\varphi \cE (w)
	= R_4(\qn, \varphi)
\end{equation}
  and after composing with $I$:
\begin{equation}\label{eq:R_4_with_I}
  	\cE + \partial_\varphi \cE(w \circ I) = R_4 \circ I.    
\end{equation}
The equality desired is then achieved by adding  \eqref{eq:R_4_without_I} and \eqref{eq:R_4_with_I} together.
\end{proof}
Now suppose that $\varphi = \varphi \circ I$ and $\Delta \varphi$ are given by \eqref{eq:solution-coh} and \eqref{eq:symmetrize} respectively.
Finally, let us compute the function $\cE(\qn, \varphi + \Delta \varphi)$. Suppose $\varphi = \varphi \circ I$, and let $\Delta \varphi$ be determined by \eqref{eq:solution-coh} and \eqref{eq:symmetrize}. We can define the following error term:
\begin{align*}
    	R_5 &= \cE(\qn, \varphi + \Delta \varphi)- \frac12(R_4 + R_4 \circ I) \\
	&=\cE(\qn, \varphi + \Delta \varphi) - \cE(\qn, \varphi) -  \partial_\varphi \cE(\qn, \varphi)(\Delta \varphi),
\end{align*}
which represents the higher order error after linear approximation of $\cE(\qn, \varphi + \Delta \varphi)$ based on a pre-fixed $\Delta q$ (hence $q^{*}$)

\section{The Iterative Step}

We summarize the iterative step as follows.

Given $q = 1 + \sum_{k = 1}^\infty q_{2k} t^{2k}$ , $\varphi = \sum_{n = 0}^\infty \sum_{j + k= 2n + 1} \varphi_{j, k} z^k \bar{z}^k$ satisfying $\varphi \circ I = \varphi$, and a parameter $M \in \N$, we perform the iterative step as follows.
\begin{enumerate}
  \item Let $\Delta q = \sum_{k = 2}^{2M} \eta_{2k} t^{2k}$ be defined using Corollary \ref{cor:Delta-q}, i.e.
	\[
	  \Lambda_{2M}\left( [S_{q + \Delta q}] - 1\right) = 0. 
	\]
  \item Denote $\qn = q + \Delta q$,
	\[
  \psi = E^+\Bigl( \cE(q^*, \varphi) \varphi_z - \Pi_+(\cE(q^*, \varphi) \varphi_z) \Bigr),
	\]
	\[
	  h = \partial_{12}S_{\qn}(\varphi^-, \varphi) \varphi_z \varphi_z^-, 
	\]
  \[
 w = \varphi_z E\left( \psi/h - \Pi(\psi/h)\right), 
  \]
  \[
	\Delta \varphi = \frac12(w + w \circ I).
  \]
\item According to the proposition \ref{prop:dphi}, we have that 
  \[
	\cE(q + \Delta q, \varphi + \Delta \varphi) = \frac12(R_4 + R_4 \circ I) + R_5, 
  \]
  where 
  \[
	R_4 = \partial_z \cE(w/\varphi_z) + R_3, 
  \]
  \[
	R_3 = [z \partial_z S]/z - \nabla^+(h \Pi(\psi/h)), 
  \]
  from Proposition \ref{prop:S-2nd}, if we set
  \[
	\kappa = \partial_{12}S (\lambda^{-1}  \varphi_z^- \varphi_{\bar{z}} - \lambda \varphi_{\bar{z}}^- \varphi_z), \quad
	g = \varphi_{\bar{z}}/\varphi_z, 
  \]
  then 
\[
  \Pi\left( \psi/h\right) 
  = R_1 + R_2, 
\]
where 
\[
  R_1 = -\frac{ [\bar{z} \partial_{\bar{z}} S] + [\bar{z} g [z \partial_z S]/ z]}{ \lambda [\kappa]}, \quad 
  R_2 =  \frac{1}{z} \left[\bar{z} \psi(\lambda^{-1} g - \lambda g^-) \frac{[\kappa] - \kappa}{\kappa [\kappa]}\right],
\]
and
  \[
  \kappa - [\kappa] = \tilde{E} \left( \partial_z \cE \cdot \varphi_{\bar{z}} - \partial_{\bar{z}} \cE \cdot \varphi_z \right).
  \]
\end{enumerate}

We now introduce the function spaces for our iteration.

Consider the spaces of power series 
\[
  X_1 = \left\{ \sum_{k = 0}^\infty q_{k} t^{k}\st  q_k \in \C\right\}, \quad
  X_2 = \left\{ \sum_{j, k \ge 0} f_{j, k} z^j \bar{z}^k\st f_{j, k} \in \C\right\}
\]
equipped with the weighted norms
\[
  \|q\|_\rho = \sum_{k = 0}^\infty |q_{k}| \rho^{k}, \quad
  \|f\|_{\rho} = \sum_{j, k \ge 0} |f_{j, k}| \rho^{j + k}.
\]

Denote
\[
  M(q)(t) = \sum_{k \ge 0} |q_k|t^k, \quad
  M(f)(z, \bar{z}) =  \sum_{j, k \ge 0} |f_{j, k}| z^k \bar{z}^k, 
\]
Denote $B_\rho = \{z, \bar{z} \st |z|, |\bar{z}| \le \rho\}$, then if $\|q\|_\rho, \|f\|_\rho < \infty$, $M(q), M(f)$ are convergent series on $|t| < \rho$ and $|z|, |\bar{z}| < \rho$ respectively as complex variable functions. In this case
\[
  \|q\|_\rho = \sup_{|t| < \rho} |M(q)(t)|, \quad
  \|f\|_\rho = \sup_{|z|, |\bar{z}| < \rho} |M(f)(z, \bar{z})|.
\]

Given two power series $f, g \in X_2$, we say $g$ is a majorant series of $f$ (denoted by $f \prec g$) if $|f_{j, k}| \le g_{j, k}$ (in particular, $g_{j, k} \ge 0$). It's clear that we always have $f \prec M(f)$. 

 For either $f \in X_1$ or $X_2$, 
\[
  \|f\|_{\rho, l} = \max_{0 \le |\alpha| \le l} \|\partial^\alpha f\|_\rho
\]
where $\partial^\alpha$ denote any regular or partial derivative of order $|\alpha|$. Also note that for $\tau>0, \sigma>0$
\[
\sup_{x>0} x^{\tau} e^{-\sigma x} = x^{\tau}e^{-\sigma x}|_{x= \tau\sigma^{-1}}
= (\tau\sigma^{-1})^{\tau} e^{-\tau} = \left(\frac{\tau}{\sigma e}\right)^{\tau}
\]

We have the following basic properties for the norm. 
\begin{lemma}\label{lem:norm-properties}
  Let $f, g \in X_2$ and $q \in X_1$, $\rho, \rho_1, \rho_2 > 0$ and $\gamma \in (0, 1)$. 
\begin{enumerate}
 \item If $f = \sum_{j, k} f_{j, k} z^j \bar{z}^k$, then
 \begin{itemize}
     \item  $|f_{j, k}| \le \rho^{-j-k} \|f\|_\rho$. If $f = zg$, then $\|f\|_{\rho} = \rho \|g\|_{\rho}$. 
     \item $\|f\|_{\rho} = \|f^{-}\|_{\rho} = \|f^{+}\|_{\rho}$
 \end{itemize}
 \item $\|fg\|_\rho \le \|f\|_\rho \|g\|_\rho$. 
 \item If $\|f\|_{\rho_1} \le \rho_2$, then $\|q \circ f\|_{\rho_1} \le \|q\|_{\rho_2}$.
 \item We have $\|\partial^\alpha f\|_{\gamma \rho} \le C \rho^{-|\alpha|} (1 - \gamma)^{-|\alpha|} \|f\|_\rho$, where $\alpha = (\alpha_1, \alpha_2)$ is a multi-index and $|\alpha| = \alpha_1 + \alpha_2$ and $C$ is a constant that depends only on $\alpha$.

   The same applies for $q \in X_1$ and $\partial^\alpha$ is a regular derivative.
 \item If $\Lambda_{M-1} f = 0$, then $\|f\|_{\gamma \rho} \le \gamma^M \|f\|_\rho$. 
 \item Consider the operators $\Pi,\Pi^{+}, \nabla, \nabla^{+}$, then 
 \begin{itemize}
     \item    $\|\Pi(f)\|_\rho, \,  \|\Pi_+(f)\|_\rho, \, \|f - \Pi(f)\|_\rho, \, \|f - \Pi_+(f)\|_\rho, \, \|[f]\|_\rho \le \|f\|_\rho$,
      \item $\|\nabla(f)\|_\rho, \|\nabla^+(f)\|_\rho \le 2 \|f\|_\rho$.
 \end{itemize}

 \item Let the operator $A$ be one of $E$, $E^+$ (see \eqref{eq:E}), or $\tilde{E}$ (see \eqref{eq:tilde-E}). There exists $C > 0$ depending only on $\tau$ such that
   \[
	 \|A(f)\|_{\gamma \rho} \le C_\tau (\log(1/\gamma))^{-\tau} \|f\|_\rho
   \]
   where $\tau$ is the Diophantine exponent (see \eqref{eq:diophantine}).
 \item Let $D$, $\bar{D}$ be the operators defined in \eqref{eq:D} and \eqref{eq:D-bar}. Then for $f = f_0 +  \sum_{n \ge 1} f_n (z \bar{z})^n$,
   \[
	 \|D(f)\|_{\gamma \rho} \le \frac{1}{2e} (\log(1/\gamma))^{-1} \|f - f_0\|_\rho, \quad
	 \|\bar{D}(f - f_0)\|_\rho \le \|f\|_\rho. 
   \]
\end{enumerate}	
\end{lemma}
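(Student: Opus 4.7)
The plan is to treat the eight items largely in parallel, since each is a separate bookkeeping property of the weighted $\ell^1$-type majorant norm $\|f\|_\rho = \sum |f_{j,k}|\rho^{j+k}$ together with the auxiliary operators already introduced. Items 1, 5 and 6 are immediate from the definition: the pointwise bound $|f_{j,k}|\rho^{j+k} \le \|f\|_\rho$ is one term in a nonnegative sum, the formula $\|zg\|_\rho = \rho\|g\|_\rho$ is an index shift, and $\|f^\pm\|_\rho = \|f\|_\rho$ because the only effect of the $\pm$ operation on each coefficient is multiplication by a unimodular factor $\lambda^{\pm(j-k)}$. Item 5 follows by pulling a common $\gamma^M$ out of every surviving term, and item 6 follows from the fact that $\Pi$, $\Pi_+$ and $[\cdot]$ are coordinate projections and so multiply coefficients by $0$ or $1$, while $\nabla$, $\nabla^+$ multiply by differences of unimodular numbers, whose moduli are bounded by $2$.

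For items 2, 3 and 4 I would use standard majorant arguments. The submultiplicativity $\|fg\|_\rho \le \|f\|_\rho\|g\|_\rho$ follows by expanding the Cauchy product and exchanging summations. The composition bound then gives $\|f^m\|_{\rho_1} \le \|f\|_{\rho_1}^m \le \rho_2^m$, so writing $q\circ f = \sum_m q_m f^m$ and applying the triangle inequality yields item 3. For item 4, differentiation multiplies the $(j,k)$-coefficient by a falling factorial bounded by $(j+k)^{|\alpha|}$, and the loss of radius is absorbed via $\sup_{n\ge 0} n^{|\alpha|}\gamma^n \le C_{|\alpha|}(1-\gamma)^{-|\alpha|}$, which is the usual Cauchy-type estimate in this setting. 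The scalar case $q \in X_1$ is identical with a single index.

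The main obstacle is item 7, the gain estimate for the inverse small-divisor operators $E$, $E^+$ and $\tilde E$. In each case the denominator has the form $\lambda^m - 1$ (up to a unimodular factor) with $m = j-k+1$, $k-j+1$ or $j-k$ respectively, and the operator excludes exactly the resonant indices so that $m \ne 0$. Applying the Diophantine hypothesis \eqref{eq:diophantine} gives $|\lambda^m - 1|^{-1} \le c^{-1}|m|^\tau \le c^{-1}(j+k+1)^\tau$. Combining this coefficient-wise with the domain loss factor $\gamma^{j+k}$ yields
\[
  \|A(f)\|_{\gamma\rho} \le c^{-1}\Bigl(\sup_{n\ge 0}(n+1)^\tau \gamma^n\Bigr)\|f\|_\rho.
\]
The key calculus identity is the one already noted in the preamble, $\sup_{x>0} x^\tau e^{-\sigma x} = (\tau/(\sigma e))^\tau$, applied with $\sigma = \log(1/\gamma)$. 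This gives $\sup_{n\ge 0}(n+1)^\tau \gamma^n \le C_\tau(\log(1/\gamma))^{-\tau}$ and completes item 7. The delicate point, and the one place where both the arithmetic condition on $\lambda$ and the specific weighted-$\ell^1$ structure are essential, is this trade-off between the polynomial loss from small denominators and the exponential gain from shrinking the disk.

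Finally, item 8 reuses exactly the same trick with $\tau = 1$: for $D$ one writes $\|D(f)\|_{\gamma\rho} \le \sup_{n\ge 1} n\gamma^{2n}\,\|f - f_0\|_\rho$ and evaluates the supremum as $(2e\log(1/\gamma))^{-1}$, while $\bar{D}$ simply divides each nonzero-index coefficient by $n\ge 1$ and is therefore a contraction on $f - f_0$, giving $\|\bar{D}(f-f_0)\|_\rho \le \|f - f_0\|_\rho \le \|f\|_\rho$.
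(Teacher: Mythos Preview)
Your proposal is correct and follows essentially the same approach as the paper. The only stylistic difference is in items 2--4: you argue directly at the level of coefficients (Cauchy product, $\|f^m\|_{\rho_1}\le\rho_2^m$, and the elementary bound $\sup_n n^{|\alpha|}\gamma^n\lesssim(1-\gamma)^{-|\alpha|}$), while the paper passes through the majorant function $M(f)$, its supremum on bidisks, and invokes the classical Cauchy estimate for item 4. Both routes give the same bounds; the majorant/Cauchy route is slightly slicker for higher derivatives, whereas your direct approach avoids introducing the auxiliary $M(\cdot)$ and makes the constants more explicit. For items 1, 5--8 the arguments coincide essentially line by line, including the key trade-off $\sup_{x>0}x^\tau e^{-\sigma x}=(\tau/(\sigma e))^\tau$ with $\sigma=\log(1/\gamma)$ in item 7.
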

\begin{proof}
All claims follow from definitions.
\begin{enumerate}
\item  Given $\displaystyle f=\sum_{j,k}f_{j,k}z^{j}\bar{z}^{k}$, then $\|f\|_{\rho} \geq |f_{j,k}|\rho^{j+k}$, i.e. $\displaystyle |f_{j,k}|\leq \frac{\|f\|_{\rho}}{\rho}$. If $f=zg$, then 
\[
\|f\|_{\rho} = \|zg\|_{\rho}= \sum_{k,j}|g_{j,k}|\rho^{j+k+1} =\rho \sum_{k,j}|g_{j,k}|\rho^{j+k} = \rho \|g\|_{\rho} 
\]
Finally, by definition of $f^{+}$ and $f^{-}$
\begin{align*}
   \|f^{+}\| &= \sum_{j,k}|f_{j,k}\lambda^{j-k}|\rho^{j+k} = \sum_{j,k}|f_{j,k}|\rho^{j+k} = \|f\|;\\
   \|f^{-}\| &= \sum_{j,k}|f_{j,k}\lambda^{k-j}|\rho^{j+k} = \sum_{j,k}|f_{j,k}|\rho^{j+k} = \|f\|.
\end{align*}

\item Note that as functions, $M(fg) \prec M(f) M(g)$. We have
\[
  \|fg\|_\rho = \sup_{B_\rho} M(fg) 
  \le \sup_{B_\rho} M(f)M(g) 
  = \|f\|_\rho \|g\|_\rho.
\]
\item Observe that $M(f \circ g) \prec M(f) \circ M(g)$. We abuse notation slightly and use $B_\rho$ to also denote $\{t \st |t| \le \rho\}$. Then
\[
  \|q \circ f\|_{\rho_1} = \sup_{B_{\rho_1}} |M(q \circ f)| \le \sup_{B_{\rho_1}} |M(q) \circ M(f)| \le \sup_{B_{\rho_2}} |M(q)| 
\]
since $\sup_{B_{\rho_1}} |M(f)| = \|f\|_{\rho_1} \le \rho_2$.
\item Observe that $M(\partial^\alpha f) = \partial^{\alpha}M(f)$ as formal power series. If $\|f\|_\rho < \infty$, $M(f)$ is holomorphic on $|z|, |\bar{z}| < \rho$ (as a function of two complex variables). Apply the Cauchy estimate, we have
\[
  \|\partial^\alpha f\|_{\gamma \rho} = \sup_{B_{\gamma \rho}} |\partial^\alpha M(f)|
  \le C_\alpha ((1- \gamma) \rho)^{-|\alpha|} \sup_{B_\rho} |M(f)| 
\]
for a constant $C_\alpha$. The same proof applies to $q \in X_1$ and $\partial^\alpha$ is a regular derivative.
\item One has that
\[
  \|f\|_{\gamma \rho} = \sum_{j + k \ge M} |f_{j, k}| \gamma^{j + k} \rho^{j + k}
  \le \gamma^{M} \sum_{j + k \ge M} |f_{j, k}| \rho^{j + k} \le \gamma^M \|f\|_\rho. 
\]
\item The first formula in this statement follows from the definition of $\|\cdot \|_{\rho}$ directly. Indeed, 
\begin{align*}
    \|f-\Pi^{+}(f)\|_{\rho} + \|\Pi^{+}(f)\|_{\rho} &=  \sum_{j}^{\infty}|f_{j,j+1}|\rho^{2j+1} + \sum_{\substack{j,k\\k\neq j+1}  } |f_{j, k}| \rho^{j + k} = \|f\|_{\rho} \\
     \|f-\Pi(f)\|_{\rho} + \|\Pi(f)\|_{\rho} &=  \sum_{j}^{\infty}|f_{j+1,j}|\rho^{2j+1} + \sum_{\substack{j,k\\k\neq j-1}  } |f_{j, k}| \rho^{j + k} = \|f\|_{\rho}\\
       \|[f]\|_{\rho}  &=\sum_{j}^{\infty}|f|_{jj} \rho^{2j}
\le \sum_{j,k} |f_{j, k}| \rho^{j + k} =\|f\|_{\rho},  
\end{align*}
from the above the desired inequalities follows directly. Concerning the second formula of this statement, by definition, 
\[
\|\nabla^{+} f\|_{\rho}= \sum_{j,k} |f_{jk}(1-\lambda^{j-k+1})|\rho^{j+k} \leq  \sum_{j,k} 2|f_{jk}|\rho^{j+k} = 2\|f\|_{\rho},
\]
similarly $\|\nabla f\|_{\rho} \le 2\|f\|_{\rho}$.
\item  Recall that $\lambda$ satisfies the Diophatine condition, i.e.,   $\forall l\neq0, |\lambda^l - 1| \ge c |l|^{-\tau},$ As a result,
\[
  |(A(f))_{j, k}|  \le  \frac{1}{c}|f_{j, k}| (j + k)^\tau, \forall j+k\neq 0
\]
for $A = E, E^+, \tilde{E}$. By the Diophatine property,
\begin{align*}
      \|A(f)\|_{\gamma \rho} &\le |f_{0,0}|+ \frac{1}{c}\sum_{\substack{j, k\\ j+k>0}} |f_{j, k}| (j + k)^\tau \gamma^{j + k} \rho^{j + k} \le \frac{1}{c}\sum_{j, k } |f_{j, k}| (j + k)^\tau \gamma^{j + k} \rho^{j + k}\\
      &\le \frac{1}{c}\|f\|_\rho \sup_n n^\tau \gamma^n, 
\end{align*}
where we have used that $0<c<1$.
Write $\sigma = \log(1/\gamma)$, a direct calculation shows that $\displaystyle \sup_{x > 0} x^\tau e^{-\sigma x} \le \left(\frac{\tau}{\sigma e}\right)^{\tau}$. Denote 
\[
C_{c,\tau}=\frac{1}{c}\left(\frac{\tau}{e}\right)^\tau, 
\]
the above inequality becomes 
\[
 \|A(f)\|_{\gamma \rho} \leq C_{c,\tau} \left(\log\frac{1}{\gamma}\right)^{-\tau}\|f\|_{\rho}
\]
\item One has that
\[
  \|D(f)\|_\rho = \sum_{k \ge 0} (k) f_{k, k}\gamma^{2k} \rho^{2k}
  \le \frac12 \|f-f_0\|_\rho \sup_n n \gamma^n = \frac{1}{2e} (\log(1/\gamma))^{-1} \|f-f_0\|_\rho,
\]
and
\[
  \|\bar{D}(f)\|_\rho = \sum_{k \ge 1} \frac{|f_{k, k}|}{k} \rho^{2k} \le \sum_{k \ge 1} |f_{k, k}| \rho^{2k} = \|f-f_0\|_\rho\leq \|f\|_{\rho}.
\]
\end{enumerate}

\end{proof}

We start with some basic norm estimates for the quantities involved in the iteration. 

\begin{lemma}\label{lem:basic-est}
Let $\mu = |1 + \lambda|$. Given $C_0 > 0$, there exists $r_0 > 0$ such that the following hold. Let $\rho_0 \in (0, r_0)$, and suppose
\[
  \|\varphi\|_{\rho_0, 3} \le C_0, \quad \|q\|_{\rho_0, 3} \le C_0, 
\]
then there exists $C_1, C_2, C_3> 0$ such that the following hold for any $\rho \in (0, \rho_0)$ and $\gamma \in (0, 1)$. 
\begin{enumerate}
  \item  $\|\xi\|_\rho \le \mu\rho(1 + C_1 \rho_2^2)$.

  \item Recall that $p(t) = \cos(t)$. We have $\|p(\zeta)\|_\rho \le C_2$.

  \item $\|\varphi_z - 1\|_\rho \le C_1 \rho^2$, and $\|\varphi_z\|_\rho, \|1/\varphi_z\|_\rho \le C_1$. The same estimates hold when $z$ is replaced with $\bar{z}$. 

  \item $\|[z\partial_z S_{\qn}]/z\|_{\gamma\rho} = \|D([S_{\qn}])/z\|_{\gamma \rho} \le \frac{C_1}{\gamma \rho \log(1/\gamma)} \|[S_{\qn}] - 1\|_\rho$. The same holds for $z$ replaced with $\bar{z}$.

  \item $\|\Pi_+(\cE(\qn, \varphi))\varphi_z\|_{\gamma\rho} = \|[z \partial_z S_{\qn}]/z\|_{\gamma\rho} \le \frac{C_1}{\gamma \rho \log(1/\gamma)} \|[S_{\qn}] - 1\|_\rho$.  

\item Let $\psi$ be as defined in \eqref{eq:solution-coh}, then
  \[
\|\psi\|_{\gamma \rho} \le \frac{C_2}{(\log(1/\gamma))^\tau} \|\cE(\qn, \varphi)\|_\rho  \], where $\tau$ is the Diophantine exponent of $\lambda$ (see \eqref{eq:diophantine}).
\item Let $s_0 = \frac14(1 + 2q_2)$, which is the constant term of $\partial_{12} S(t_1, t_2)$. Then 
\[\|\partial_{12} S_{\qn}(\varphi^-, \varphi) - s_0\|_\rho \le C_2 \mu \rho(1 + C_1 \rho^2)\|\qn\|_{\mu \rho(1 + C_1 \rho^2), 3}.
\]
Suppose that
  \begin{equation}  \label{eq:small-q}
  	\|\partial_{12} S_{\qn}(\varphi^-, \varphi) -  s_0\|_\rho  < \frac12|s_0|, 
  \end{equation}
  then $\|h\|_\rho, \|1/h\|_\rho \le C_3$, where $h = \partial_{12}S_{\qn}(\varphi^-, \varphi) \varphi_z \varphi_z^-$.
\item Let $w$ be defined in \eqref{eq:solution-coh}, then 
  \[
	\|w\|_{\gamma \rho} \le \frac{C_2}{(1 - \gamma)^\tau} \|\psi\|_\rho \|1/h\|_\rho. 
  \]
\item The next few items estimate the remainder in Proposition \ref{prop:small-average-psi}. 

  Let $\kappa$ be as defined in \eqref{eq:kappa}. Then similar to the estimate for $h$, we have that $\|\kappa - (\lambda^{-1}-\lambda)s_0\|_{\rho} \le C_2 \|\qn\|_{\mu \rho(1 + C_1 \rho^2), 3}$. If \eqref{eq:small-q} holds, then
  \[
	\|\kappa\|_\rho, \|[\kappa]\|_\rho, \|1/\kappa\|_\rho, \|1/[\kappa]\|_\rho \le C_3.
  \]

\item 
  \[
	\|\kappa - [\kappa]\|_{\gamma^2 \rho} \le
\frac{C_2}{(\log(1/\gamma))^\tau (1 - \gamma) \rho} \|\cE\|_\rho.
  \]
\item Let $R_1, R_2$ be as defined in Proposition \ref{prop:S-2nd}. Then
  \[
	\|R_1\|_{\gamma \rho} \le \frac{C_2}{\gamma \rho \log(1/\gamma)} \|1/[\kappa]\|_{\gamma \rho} \|[S_{\qn}] - 1\|_\rho. 
  \]
  \[
	\|R_2\|_{\gamma \rho} \le \frac{C_2}{\gamma \rho}\|1/\kappa\|_{\gamma \rho} \|1/[\kappa]\|_{\gamma \rho} \|\psi\|_{\gamma \rho} \|\kappa - [\kappa]\|_{\gamma \rho}. 
  \]
\item Recall that from Proposition \ref{prop:S-2nd}
  \[
	\Pi(\psi/h) = R_1 + R_2. 
  \]
Let $R_3, R_4$ be as defined in Proposition \ref{prop:w}. Then 
  \[
	\|R_3\|_{\gamma \rho} \le \frac{C_2}{\gamma\rho(1 - \gamma)} \|[S_{\qn}] - 1\|_\rho
	+ C_2 \|h\|_{\gamma \rho} \|R_1 + R_2\|_{\gamma \rho},
  \]
  \[
	\|R_4\|_{\gamma \rho} \le \frac{C_2}{1 - \gamma} \|\cE(\qn, \varphi)\|_\rho \|w\|_\rho + C_2 \|R_3\|_{\gamma \rho}. 
  \]
\item Suppose for some $\rho_1 \in (0, \rho)$, we have
\begin{equation}  \label{eq:delta-phi-assumption}
	\|\Delta \varphi\|_{\rho_1, 3} < C_0, 
\end{equation}
  then
  \[
	\|R_5\|_\rho \le C_2 \|\qn\|_{\mu \rho_1( 1+ C_1 \rho_1^2), 3} \|\Delta \varphi\|_{\rho_1}^2. 
  \]
\end{enumerate}
\end{lemma}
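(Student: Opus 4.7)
The proof is essentially a systematic application of the norm machinery of Lemma \ref{lem:norm-properties} to the explicit formulas from Sections 2--4, and I would work through items (1)--(13) in order. The main conceptual choice is to fix $r_0$ small enough that every Taylor expansion and Neumann-series inversion appearing below converges with constants depending only on $C_0$, $c$, and $\tau$. Then $C_1$ absorbs derivative-at-origin data for $\varphi$, $C_2$ absorbs subsequent polynomial combinations in $\|\varphi\|$ and $\|q\|$, and $C_3$ absorbs the geometric-series constants activated once the smallness hypothesis \eqref{eq:small-q} is in force.

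The first block (1)--(6) is routine. Items (1)--(3) follow from $\varphi = z + \bar{z} + O_3$, the invariance $\|f^\pm\|_\rho = \|f\|_\rho$, and the Neumann inversion of $\varphi_z = 1 + O_2$; item (2) uses composition with the entire function $\cos$ through item (3) of Lemma \ref{lem:norm-properties}. Items (4)--(5) rely on the identity $[z\partial_z S_{\qn}] = D([S_{\qn}])$ from \eqref{eq:D-proj} combined with the logarithmic loss of $D$ from item (8) of Lemma \ref{lem:norm-properties}, after factoring out $1/z$ at the cost of $1/(\gamma\rho)$ via item (1). Item (6) combines the $(\log(1/\gamma))^{-\tau}$ loss of $E^+$ from item (7) of Lemma \ref{lem:norm-properties} with the projection bound already established in (5).

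The central block (7)--(10) is a Taylor expansion exercise on $\partial_{12}S_{\qn}(t_1,t_2)$. I would expand at $(0,0)$ as a series in $t_1, t_2$ whose coefficients come from $\qn$ and $p$, substitute $t_1 = \varphi^-$, $t_2 = \varphi$, and bound via the composition estimate of Lemma \ref{lem:norm-properties}; the bound $\|\varphi^\pm\|_\rho \le \mu\rho(1+C_1\rho^2)$ from item (1) supplies the required inner radius. Once \eqref{eq:small-q} holds, $h$ and $\kappa$ lie within $|s_0|/2$ of nonzero constants, so $1/h$, $1/\kappa$, and $1/[\kappa]$ are inverted by geometric series. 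For $\kappa - [\kappa]$ I would invoke \eqref{eq:non-res-kappa}: one derivative of $\cE$ costs $(1-\gamma)^{-1}\rho^{-1}$ by item (4) of Lemma \ref{lem:norm-properties} and then $\tilde{E}$ costs $(\log(1/\gamma))^{-\tau}$ by item (7); shrinking from $\rho$ to $\gamma\rho$ to $\gamma^2\rho$ in two stages decouples the two losses.

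The final items (11)--(13) are substitution. For $R_1$ and $R_2$ I would read off the formulas in Proposition \ref{prop:S-2nd}, factor out $1/z$ and $1/[\kappa]$, and chain the already-established bounds. For $R_3$ and $R_4$ I would combine the bound on $\Pi(\psi/h) = R_1 + R_2$ with the derivative loss on $\partial_z\cE$. The one item requiring a mild new idea is $R_5$, which is the second-order Taylor remainder $\cE(\qn, \varphi + \Delta\varphi) - \cE(\qn, \varphi) - \partial_\varphi\cE(\qn, \varphi)(\Delta\varphi)$; I would write it in integral form $\int_0^1(1-s)\,\partial_\varphi^2\cE(\qn, \varphi + s\Delta\varphi)(\Delta\varphi, \Delta\varphi)\,ds$, observe that $\partial_\varphi^2\cE$ is a second derivative of $S_{\qn}$ composed with $\varphi^-$ and $\varphi$, and bound that composition by $\|\qn\|_{\mu\rho_1(1+C_1\rho_1^2),3}$ provided \eqref{eq:delta-phi-assumption} keeps $\varphi + s\Delta\varphi$ inside the analyticity domain. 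The main obstacle throughout is administrative rather than mathematical: ensuring every composition stays in the analyticity domain and every inverted quantity stays uniformly bounded away from zero, all of which becomes automatic once $r_0$ is chosen small enough.
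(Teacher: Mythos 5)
Your proposal matches the paper's own proof essentially item for item: items (1)--(6) are routine applications of Lemma \ref{lem:norm-properties}, items (7)--(10) are Taylor expansion of $\partial_{12}S_{\qn}$ composed with $\varphi^-, \varphi$ followed by Neumann-series inversion under \eqref{eq:small-q}, and items (11)--(13) are direct substitution, with $R_5$ handled via the integral form of the second-order Taylor remainder exactly as in the paper. The only detail you pass over lightly --- and the paper makes explicit --- is that $C_1$ is deliberately chosen as $2C_0$ in item (1) precisely so that the radius estimate carries over to $\varphi + s\Delta\varphi$ in item (13), but this is consistent with your remark about keeping compositions inside the analyticity domain.
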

\begin{proof}
  Within this proof, $f \lesssim g$ stands for $f \le C g$ for a constant $C$ depending only on $C_0$ and the constant $c$ in the Diophantine condition \eqref{eq:diophantine}.
\begin{enumerate}
    \item Since $\varphi = z + \bar{z} + O_3$, we have $\xi = \frac12 (\lambda^{-1} + 1) z + \frac12 (\lambda + 1) z + O_3 = \xi_0 + O_3$, hence
  \[
	\|\xi - \xi_0\|_\rho = \sup_{B_\rho} |M(\xi - \xi_0)|
	\le \rho^3 \sup_{B_\rho} \sup_{|\alpha| \le 3} |\partial^\alpha M(\xi - \xi_0)|\le \|\varphi\|_{\rho,3}.
  \]
  We will set 
\begin{equation}  \label{eq:C1}
C_1 = 2 C_0.	
\end{equation}
Note that $\|\xi_0\|_\rho = \mu \rho$, we have
\[
  \|\xi\|_\rho \le \|\xi_0\|_\rho + \|\xi - \xi_0\|_\rho 
  \le \mu \rho + \frac12 C_1 \rho^3 = \mu \rho(1 + \frac12 C_1 \rho^2).
 \]
 This estimate is \emph{better} than what's required. In fact, the choice in \eqref{eq:C1} is to ensure the estimate holds if we replace $C_0$ with $2C_2$. This is needed in item 13. 
 \item Since $p$ is an entire function, we can choose say $C_2 = \|p\|_1$.
 \item We have
  \[
	\|\varphi_z - 1\|_\rho = \sup_{B_\rho} |M(\varphi_z - 1)|
	\lesssim \rho^2 \sup_{B_\rho} \sup_{|\alpha| \le 3} |\partial^\alpha M(\varphi)| 
	\lesssim \rho^2
  \]
  by the same argument as item 1. This immediately implies $\|\varphi_z\|_\rho \lesssim 1$. By choosing $\rho_0$ small enough, we have $\|\varphi_z - 1\|_\rho < \frac12$. Since $\displaystyle \frac{1}{\varphi_z} = \sum_{k=0}^{\infty}(1-\varphi_z)^k$, it follows that  $\|1/\varphi_z\|_\rho < 2$. 
\item By item 1\& item 8 in Lemma~\ref{lem:norm-properties}
  \[
	\|D([S_{\qn}])/z\|_{\gamma \rho} \le (\gamma \rho)^{-1} \|D([S_{\qn}])\|_{\gamma \rho} \lesssim \frac{1}{\gamma \rho \log(1/\gamma)} \|[S_{\qn}] - 1\|_\rho,
  \]
\item This is a direct consequence of \eqref{eq:S-to-E2}.
\item By definition 
  \[
	\psi = E^+ \left( \cE(\qn, \varphi) \varphi_z - \Pi_+ (\cE(\qn, \varphi)\varphi_z)\right). 
  \]
  By item 6 and 7 of Lemma~\ref{lem:norm-properties},
  \[
	\|\psi\|_{\gamma \rho} \lesssim \frac{1}{(\log(1/\gamma))^\tau} \|(\id - \Pi_+) \cE\varphi_z\|_\rho 
	\le \frac{1}{(\log(1/\gamma))^\tau} \|\cE\varphi_z\|_\rho
	\lesssim \frac{1}{(\log(1/\gamma))^\tau} \|\cE\|_\rho
  \]
  where in the last inequality, we used $\|\varphi_z\|_\rho \lesssim 1$.
 \item A direct calculation shows that
  \[
	\partial_{12} S(t_1, t_2) = \frac14 \cos\frac{t_2 - t_1}{2} \cdot (q + q'')\left( \frac{t_1 + t_2}{2}\right).
  \]
  This also implies
  \[
	s_0 = \frac14(1 + 2q_2).
  \]
 Let $\rho_1 = \mu\rho(1 + C_1 \rho^2)$. Recall that $\|\eta\|_{\rho}\leq \rho_1$. By lemma \ref{lem:norm-properties}, items 1, 2 and 3, one has that 
 \begin{align}\label{eq:S12-split}
   \|\partial_{12} S(\varphi^-, \varphi) - s_0\|_{\rho} 
	\leq & \| \frac14(q'' + q-s_0)(\xi)\|_{\rho} + \|\frac14 (q'' + q)(\xi) (\cos(\zeta) - 1)\|_{\rho}\notag \\
	\leq & \|\frac14(q'' + q-s_0)\|_{\rho_1} + \|\frac14 (q'' + q)(\xi)\|_{\rho} \|(\cos(\zeta) - 1)\|_{\rho} \notag \\
	\leq & \|\frac14(q'' + q-s_0)\|_{\rho_1} + \|\frac14 (q'' + q)\|_{\rho_1} \|\zeta\|^2_{\rho}\notag\\
	\le& \rho_1\|q\|_{\rho_1,3} + \rho^2\|q\|_{\rho_1,3}\notag \\
	\lesssim& \rho_1 \|q\|_{\rho_1,3}
\end{align} 
where the last step is due to that $\rho^2 < \rho_1$ for for $\rho_0$ small enough.
  To prove the second part of this item, note that when $\rho$ is small, estimate \eqref{eq:S12-split} implies \eqref{eq:small-q} 
  \[
	\|\partial_{12}S(\varphi^-, \varphi) - s_0\|_{\rho} < \frac12 |s_0|. 
  \]
  Since $\|\varphi_z - 1\|_\rho, \|\varphi_z^- - 1\|_\rho \lesssim \rho^2$, if $\rho_0$ is small enough, we can ensure
  \begin{align*}
     \|h - s_0\|_\rho = \| \partial_{12}S(\varphi^-, \varphi) \varphi_z \varphi_z^- - s_0\|_\rho< \frac34 |s_0|.
  \end{align*}
It follows that 
\begin{align*}
 \|h\|_\rho \leq \|h-s_0\|_{\rho} + \|s_0\|_\rho \leq \frac74|s_0|\\
 \|\frac1h\|_\rho = \|s_0^{-1}\sum_{k = 0}^\infty \left( \frac{s_0 - h}{s_0}\right)^k\|_{\rho}\leq \frac4{|s_0|}
\end{align*}

  \item By lemma \ref{lem:norm-properties} item 2, item 6 and item 8,  We have
\begin{align*}
  	\|\varphi_z E \circ (\id - \Pi) (\psi/h)\|_{\gamma \rho}
	&\lesssim \|E \circ (\id - \Pi) (\psi/h)\|_{\gamma \rho}\\
	&\lesssim \frac{1}{(\log(1/\gamma))^\tau}\|E \circ (\id - \Pi) (\psi/h)\|_{\rho}\\
	&\lesssim \frac{1}{(\log(1/\gamma))^\tau}\|(\id - \Pi) (\psi/h)\|_{\rho}\\
	&\lesssim \frac{1}{(\log(1/\gamma))^\tau}\|(\psi/h)\|_{\rho}\\
	&\lesssim \frac{1}{(\log(1/\gamma))^\tau} \|\psi\|_\rho\|1/h\|_\rho.    
\end{align*}
\item Recall that 
  \[
	\kappa = \partial_{12} S \left(\lambda^{-1} \varphi_z^- \varphi_{\bar{z}}
   - \lambda \varphi_{\bar{z}}^- \varphi_z \right).
  \]
Denote $g = \lambda^{-1} \varphi_z^- \varphi_{\bar{z}} - \lambda \varphi_{\bar{z}}^- \varphi_z $. Since $\varphi(z, \bar{z}) = z + \bar{z} + O_3$, it follows that $\varphi_z^- \varphi_{\bar{z}}= 1+ O_2$, $\varphi_{\bar{z}}^- \varphi_z=1+ O_2$,    we have thus 
\[
\|g - (\lambda^{-1} - \lambda)\|_\rho  =\|\lambda^{-1}(\varphi_z^- \varphi_{\bar{z}}-1) -\lambda(\varphi_{\bar{z}}^- \varphi_z-1)\|_\rho   \lesssim \rho^2.
\]
The estimates for $\|\kappa\|_{\rho}$ and $\|\frac{1}{\kappa}\|_{\rho}$ is identical to the estimate of $h$ in item 7, while those for $\|[\kappa]\|_{\rho}$ and $\displaystyle \|[\frac{1}{\kappa}]\|_{\rho}$ follows from item 6 in Lemma \ref{lem:basic-est}.
\item We have (see \eqref{eq:non-res-kappa})
  \[
  \kappa - [\kappa] = \tilde{E} \left( \partial_z \cE \cdot \varphi_{\bar{z}} + \partial_{\bar{z}} \cE \cdot \varphi_z \right).
  \]
Then by item 4 and item 7 in Lemma \ref{lem:basic-est},
  \[
	\|\kappa - [\kappa]\|_{\gamma^2 \rho}
	\lesssim \frac{1}{(\log(1/\gamma))^\tau} \|\partial_z \cE \varphi_{\bar{z}} + \partial_{\bar{z}} \cE \varphi_z\|_{\gamma \rho}
	\lesssim \frac{1}{(\log(1/\gamma))^\tau (1 - \gamma) \rho} \|\cE\|_\rho.
  \]
\item By definition (see \eqref{eq:r1andr2})
  \[
  R_1 = \frac{- [\bar{z} \partial_{\bar{z}} S] + [\bar{z} g [z \partial_z S]/ z]}{ \lambda z [\kappa]}, \quad 
  R_2 =  \frac{1}{z} \left[ \psi(\lambda^{-1} g - \lambda g^-) \frac{[\kappa] - \kappa}{\kappa [\kappa]}\right].
  \]
 Since $\|g\|_{\rho} \lesssim 1$, by item 4 of Lemma \ref{lem:basic-est}
 \begin{align}
    \|R_1\|_{\gamma\rho} &\leq  \left\|\left(\frac{- [\bar{z} \partial_{\bar{z}} S]}{z} + \frac{[\bar{z} g [z \partial_z S]/ z]}{z} \right) \frac{1} {\lambda[\kappa]}\right\|_{\gamma\rho} \notag\\
    &\leq \left(\| [\bar{z} \partial_{\bar{z}} S]/z\|_{\gamma\rho}+\|g\|_{\gamma \rho}\|[z\partial_zS]/z\|_{\gamma\rho}\right)\|\frac{1}{\kappa}\|_{\gamma \rho}\notag\\
    &\lesssim \frac{1}{\gamma \rho \log(1/\gamma)} \|1/[\kappa]\|_{\gamma \rho} \|[S_{\qn}] - 1\|_\rho. 
 \end{align}

  The estimate for $R_2$ is straight forward:
  \[
	\|R_2\|_{\gamma \rho} \lesssim (\gamma \rho)^{-1} \|\psi\|_{\gamma \rho} \|[\kappa] - \kappa\|_{\gamma \rho} \|1/\kappa\|_{\gamma \rho} \|1/[\kappa]\|_{\gamma \rho}.
  \]
  \item  We have
  \[
	R_3 = [z \partial_z S]/z - \nabla^+(h \Pi(\psi/h)), \quad
	R_4 = \partial_z \cE (w/\varphi_z) + R_3/\varphi_z.
  \]
  Using item 6 of Lemma~\ref{lem:norm-properties} and item 4 of this lemma, we have
  \begin{align*}
      	\|R_3\|_{\gamma \rho} &\leq \|[z\partial_zS]/z\|_{\gamma\rho} + \|h\|_{\gamma \rho }\|\Pi(\psi/h)\|_{\gamma \rho}\\
	&\leq \|[z\partial_zS]/z\|_{\gamma\rho} + \|h\|_{\gamma \rho }\|\psi/h\|_{\gamma \rho}\\
	&\lesssim \frac{1}{\gamma \rho \log(1/\gamma)} \|[S_{\qn}] - 1\|_\rho + \|h\|_{\gamma \rho} \|R_1 + R_2\|_{\gamma \rho}.\\
	\|R_4\|_{\gamma \rho} &\leq\left( \|\partial_{z}\cE(w)\|_{\gamma \rho}  + \|R_3\|_{\gamma \rho}\right) \|\frac{1}{\varphi_z}\|_{\gamma\rho}.\\
	&\lesssim  \frac{1}{(1 - \gamma)\rho} \|\cE\|_\rho \|w\|_{\gamma \rho} + \|R_3\|_{\gamma \rho}\|
  \end{align*}
\item We have
  \[
	R_5 = \int_0^1 (1 - t)\frac{d^2}{dt^2}\cE(\qn, \varphi + t \Delta \varphi)dt 
  \]
  as formal power series. Write $\varphi_t = \varphi + t \Delta \varphi$, $\xi_t = (\varphi_t^- + \varphi_t)/2$, $\zeta_t = (\varphi_t - \varphi_t^-)/2$, we have
  \[
	\cE(q, \varphi_t)
	=  \frac12 \left( 
	q'(\xi_t) p(\zeta_t) - q(\xi_t) p'(\zeta_t) + q'(\xi_t^+) p(\zeta_t^+) + q(\xi_t^+)p'(\zeta_t^+)
	\right).
  \]
  Hence $\frac{d^2}{dt^2} \cE(q, \varphi_t)$ is an expression of the type
  \[
	\sum_{j, k, \beta = (\beta_1, \beta_2)} c_{j, k, \beta} q^{(j)}(\xi_t^{\beta_1}) p^{(k)}(\zeta_t^{\beta_2}) V_{j, k, \beta} W_{j, k, \beta} 
  \]
  where $j + k = 3$, $\beta_1, \beta_2$ is either empty or $+$, and $V, W$ takes values from $\Delta \xi, \Delta \xi^+, \Delta \zeta, \Delta \zeta^+$. 

  Assumption \eqref{eq:delta-phi-assumption} implies $\|\varphi_t\|_{\rho_1, 3} \le 2 C_0$, which implies $\|\xi_t\|_{\rho_1}, \|\xi_t^+\|_{\rho_1} \le \mu \rho_1(1 + C_1 \rho_1^2)$ with the same $C_1$ as in Item 1 of this lemma. This is why in \eqref{eq:C1} we chose $C_1$ to be twice as large as necessary. It follows that
  \[
	\|q^{j}(\xi_t^{\beta_1})\|_{\rho_1}  \le \|q\|_{\mu \rho_1(1 + C_1 \rho_1^2), 3}.
  \]
  Since 
$\|p^{(k)}(\zeta_t^{\beta_2})\|_\rho \lesssim 1$, $\|\Delta \zeta\|_\rho, \|\Delta \zeta^+\|_\rho, \|\Delta \xi\|_\rho, \|\Delta \xi^+\|_{\rho_1}\lesssim \|\Delta \varphi\|_{\rho_1}$, we obtain
\[
  \left\|
	 \frac{d^2}{dt^2} \cE(\qn, \varphi_t) 
	 \right\|_{\rho_1}
	 \lesssim \|q\|_{\mu \rho_1(1 + C_1 \rho_1^2), 3} \|\Delta \varphi\|_{\rho_1}^2. 
\]

\end{enumerate}

\end{proof}

The following proposition estimate the norm of the mapping from $[S] - 1$ to $\Delta q$. 

\begin{proposition}\label{prop:bound-Delta-q}
  Let $C_0, C_1, r_0$ be as given in Lemma \ref{lem:basic-est}. There exists $C_3, C_4 > 0$ depending only on $C_0$ such that if $\rho_0 \in (0, r_0)$,
  \[
  \|\varphi\|_{\rho_0, 3} \le C_0, \quad \|q\|_{\rho_0, 3} \le C_0,
  \]
$M \in \N$, and $\rho \in (0, \rho_0)$ satisfies
\begin{equation}\label{eq:M52rho2}
  C_3 M^{\frac52} \rho^2 < 1 ,  
\end{equation}
the polynomial $\Delta q = \sum_{k = 2}^{2M} \eta_{2k}$ as defined in \eqref{eq:Delta-q} satisfies
\[
  \|\Delta q\|_{\rho_2} \le  C_4 \sqrt{M} \, \|\Lambda_{2M} [S_q] - 1\|_{\rho_1}, 
\]
where 
  \[
	\rho_1 = \frac{\rho}{(1 + C_1 \rho^2)^M}, \quad \rho_2 = \mu\rho_1.
  \]
\end{proposition}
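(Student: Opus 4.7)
The plan is to solve the finite lower-triangular linear system
\[
\sum_{k=2}^j P_{j,k}\,\eta_{2k} = -[S_q]_{2j}, \qquad j=2,\ldots,M,
\]
from the proof of Proposition~\ref{prop:outer-eq} by forward substitution, with careful bookkeeping in the weighted norm at scale $\rho_1$. Here $P_{j,k}$ is the $(z\bar z)^j$ coefficient of $[\xi^{2k}p(\zeta)]$, and the diagonal $|P_{j,j}|$ admits an explicit lower bound via the binomial identity derived in the proof of Proposition~\ref{prop:outer-eq}.

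First I would estimate the off-diagonal entries using Cauchy's inequality, $|P_{j,k}|\rho_1^{2j}\le \|\xi\|_{\rho_1}^{2k}\|p(\zeta)\|_{\rho_1}$, together with items 1 and 2 of Lemma~\ref{lem:basic-est}, which give $\|\xi\|_{\rho_1}\le\mu\rho_1(1+C_1\rho_1^2)$ and $\|p(\zeta)\|_{\rho_1}\le C_2$. Dividing by $|P_{j,j}|$ and passing to the weighted unknowns $a_j := |\eta_{2j}|\rho_2^{2j}$ and source terms $\sigma_j$ proportional to $|[S_q]_{2j}|\rho_1^{2j}/|P_{j,j}|$, the recursion
\[
\eta_{2j} = -P_{j,j}^{-1}\bigl([S_q]_{2j} + \textstyle\sum_{k=2}^{j-1} P_{j,k}\eta_{2k}\bigr)
\]
converts into a Gronwall-type inequality
\[
a_j \le \sigma_j + \sum_{k=2}^{j-1} L_{j,k}\, a_k,
\]
where $L_{j,k}$ is controlled by a summable sequence after the powers of $\mu\rho_1$ cancel against the diagonal. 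The surviving factor $(1+C_1\rho_1^2)^{2k}$ in $L_{j,k}$ is exactly where the definition $\rho_1=\rho/(1+C_1\rho^2)^M$ and the assumption $C_3 M^{5/2}\rho^2<1$ play their role: together they force $(1+C_1\rho^2)^M=O(1)$, hence $(1+C_1\rho_1^2)^{2k}=O(1)$ uniformly over $k\le M$.

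Finally, I would iterate the Gronwall inequality, either via the terminating Neumann series for the strictly lower-triangular operator $L$ or by a telescoping estimate on $A_j=\sum_{k\le j} a_k$, to conclude $\sum_j a_j \le C\sum_j \sigma_j$. A Cauchy--Schwarz-type bound over $j\in\{2,\ldots,M\}$ then gives $\sum_j\sigma_j\le C'\sqrt M\,\|\Lambda_{2M}[S_q]-1\|_{\rho_1}$, which is where the $\sqrt M$ factor in the statement arises. The main obstacle is the delicate balance in the choice of $\rho_1$: small enough that $\|\xi\|_{\rho_1}$ remains close to $\mu\rho_1$ so that the propagation of the triangular iteration stays bounded, yet comparable to $\rho$ so that the source norm $\|\Lambda_{2M}[S_q]-1\|_{\rho_1}$ measures the right quantity; the threshold $M^{5/2}\rho^2<1$ is precisely what reconciles these constraints with the $\sqrt M$ loss absorbed in the final Cauchy--Schwarz step.
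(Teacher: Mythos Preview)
Your overall architecture---rescale to weighted unknowns $a_j=|\eta_{2j}|\rho_2^{2j}$, invert the triangular system via a Neumann/Gronwall iteration, and extract $\sqrt{M}$ from the diagonal $|P_{j,j}|\sim\mu^{2j}/\sqrt{j}$---is exactly the paper's. The gap is in your off-diagonal bound.

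The raw Cauchy estimate you propose, $|P_{j,k}|\rho_1^{2j}\le\|\xi\|_{\rho_1}^{2k}\|p(\zeta)\|_{\rho_1}\lesssim(\mu\rho_1)^{2k}(1+C_1\rho_1^2)^{2k}$, gives after rescaling
\[
L_{j,k}=\frac{|P_{j,k}|}{|P_{j,j}|}\,\rho_2^{2j-2k}\ \lesssim\ \sqrt{j}\,(1+C_1\rho_1^2)^{2k},
\]
which has \emph{no} decay in $j-k$. Even granting $(1+C_1\rho_1^2)^{2k}=O(1)$, the column sums are $\sum_{j>k}L_{j,k}\sim M^{3/2}$, so neither the Neumann series nor the Gronwall iteration contracts, and the hypothesis $M^{5/2}\rho^2<1$ never enters.

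What is missing is one extra factor of $\rho^2$ on the off-diagonal. Since $\xi_0=\frac12((\lambda^{-1}+1)z+(\lambda+1)\bar z)$ is homogeneous of degree $1$, the polynomial $\xi_0^{2k}$ contributes nothing to the $(z\bar z)^j$ coefficient when $j>k$. Hence for $j>k$ one may replace $\xi^{2k}p(\zeta)$ by $\xi^{2k}p(\zeta)-\xi_0^{2k}=(\xi^{2k}-\xi_0^{2k})+\xi^{2k}(p(\zeta)-1)$; both pieces carry an explicit $O(\rho^2)$ from $\xi-\xi_0=O_3$ and $p(\zeta)-1=O_2$, yielding
\[
|P_{j,k}|\ \lesssim\ k\,\mu^{2k}(1+C_1\rho^2)^{2k}\,\rho^{2k-2j+2}.
\]
Now the rescaled off-diagonal is $\lesssim \sqrt{j}\,k\,\rho^2\,(1+C_1\rho^2)^{2k}(\rho_1/\rho)^{2(j-k)}$, and the choice $\rho_1=\rho/(1+C_1\rho^2)^M$ is used to make the last two factors combine to $(1+C_1\rho^2)^{2k-2M(j-k)}\le 1$ for $j\ge k+1$, $k\le M$ (this is its real role, not merely forcing $(1+C_1\rho^2)^M=O(1)$). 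One then gets $\|\Gamma^{-1}N\|_1\lesssim M^{5/2}\rho^2<\tfrac12$, exactly the hypothesis \eqref{eq:M52rho2}, and the Neumann series gives $\|\tilde T_M^{-1}\|_1\lesssim\|\Gamma^{-1}\|_1\lesssim\sqrt{M}$.
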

The proof is contained in Appendix~\ref{sec:delta-q}.

Lemma \ref{lem:basic-est} and Proposition \ref{prop:bound-Delta-q} will be used to estimate the norm of the solution $(q, \varphi)$ in the iteration. The next lemma shows that the iteration show that the iteration double the number of terms of correct coefficients.

\begin{lemma}\label{lem:order-inc}
Suppose $(q, \varphi)$ are chosen such that 
\[
  \cE(q, \varphi) = O_{2N + 1}
\]
for some $N \in \N$. (Note that if $q$ is an even series and $\varphi$ and odd series, then $\cE(q, \varphi)$ is an odd series). Suppose $\Delta q$ is chosen such that 
\[
  \Lambda_{2M}\left( [S_{q + \Delta q}] - 1\right) = 0
\]
for some $M  \in \N$.

Then for $K = \min\{M, 2N\}$, we have
\[
  \Delta q = O_{2N+2}, \quad \Delta \varphi = O_{2N + 1}, \quad
  \cE(q + \Delta q, \varphi + \Delta \varphi) = O_{2K + 1}. 
\]
\end{lemma}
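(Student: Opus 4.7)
The argument is essentially order-tracking through the construction of section 4, using only that $\cE$ is linear in $q$ and that each operator involved either preserves order or improves it by a controlled amount. The plan has four stages.

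\textbf{Stage 1: $\Delta q = O_{2N+2}$.} From the identity derived inside the proof of Lemma \ref{lem:S-zero-mean} we have $[S_q] - 1 = \bar{D}(z\,\Pi_+(\cE(q,\varphi)\varphi_z))$. Since $\cE(q,\varphi) = O_{2N+1}$ and $\varphi_z$ has constant term $1$, the product $\cE(q,\varphi)\varphi_z$ is $O_{2N+1}$; the operators $\Pi_+$, $\bar{D}$ preserve order, and multiplication by $z$ raises it by one, so $[S_q] - 1 = O_{2N+2}$. As $[S_q]$ is a series in $z\bar{z}$, this is equivalent to $\Lambda_{2N}([S_q]-1)=0$. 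The "moreover" clause of Corollary~\ref{cor:Delta-q} then yields $\Lambda_{2N}\Delta q = 0$, and since $\Delta q$ has only even powers of $t$ starting from $t^4$, $\Delta q = O_{2N+2}$. (If $N\ge M$ the corollary together with uniqueness forces $\Delta q = 0$.)

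\textbf{Stage 2: $\cE(\qn,\varphi) = O_{2N+1}$.} Since $\cE$ is linear in $q$, $\cE(\qn,\varphi) = \cE(q,\varphi) + \cE(\Delta q,\varphi)$. The first term is $O_{2N+1}$ by hypothesis. For the second, $\xi = (\varphi^-+\varphi)/2 = O_1$ and $\zeta = (\varphi^- - \varphi)/2 = O_1$ with $\Delta q(t) = O(t^{2N+2})$, so $\Delta q(\xi), \Delta q'(\xi) = O_{2N+1}$, and both $\partial_1 S_{\Delta q}$, $\partial_2 S_{\Delta q}$ evaluated at $(\varphi^-,\varphi)$ are $O_{2N+1}$. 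Hence $\cE(\Delta q,\varphi) = O_{2N+1}$.

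\textbf{Stage 3: $\Delta\varphi = O_{2N+1}$.} Inspect the formulas in \eqref{eq:solution-coh}. Multiplication by $\varphi_z$ (constant term $1$), projections $\Pi_+$, $\Pi$, and the small-divisor inverses $E^+$, $E$ all preserve order. Under the smallness assumption for the iteration, $h$ has a nonzero constant term, so $1/h$ also preserves order. Therefore $\cE(\qn,\varphi)\varphi_z$ is $O_{2N+1}$, $\psi = O_{2N+1}$, $\psi/h - \Pi(\psi/h) = O_{2N+1}$, $w = O_{2N+1}$, and the symmetrization $\Delta\varphi = (w + w\circ I)/2 = O_{2N+1}$.

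\textbf{Stage 4: $\cE(\qn,\varphi+\Delta\varphi) = O_{2K+1}$.} By Proposition \ref{prop:dphi},
\[
  \cE(\qn,\varphi+\Delta\varphi) = \tfrac12(R_4 + R_4\circ I) + R_5.
\]
Since $\Lambda_{2M}([S_{\qn}]-1)=0$ and $\cE(\qn,\varphi) = O_{2N+1}$, Proposition~\ref{prop:S-2nd} gives $R_1 = O_{2M+1}$ and $R_2 = O_{4N+1}$, so $\Pi(\psi/h) = O_{2K+1}$ with $K = \min\{M,2N\}$. Then $h\Pi(\psi/h)$ and $\nabla^+(h\Pi(\psi/h))$ are $O_{2K+1}$, and $[z\partial_z S]/z = D([S_{\qn}])/z = O_{2M+1} \subset O_{2K+1}$, so $R_3 = O_{2K+1}$. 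For $R_4 = \partial_z\cE\,(w/\varphi_z) + R_3/\varphi_z$, the first summand is $O_{2N}\cdot O_{2N+1} = O_{4N+1} \subset O_{2K+1}$, yielding $R_4 = O_{2K+1}$. Finally $R_5$, being the second-order Taylor remainder in $\Delta\varphi$, is quadratic in $\Delta\varphi = O_{2N+1}$, hence $R_5 = O_{4N+2} \subset O_{2K+1}$. Adding the three contributions finishes the proof.

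The only delicate point is that the estimates $R_1 = O_{2M+1}$, $R_2 = O_{4N+1}$ must be invoked in the exact form stated in Proposition~\ref{prop:S-2nd}; otherwise every step is a straightforward order count, with the bound $2K+1 = \min\{2M+1,4N+1\}$ arising as the minimum between the linear error coming from the truncation of $\Delta q$ and the quadratic error coming from the Newton scheme.
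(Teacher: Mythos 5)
Your proposal is correct and follows essentially the same order-tracking argument the paper uses: deduce $[S_q]-1 = O_{2N+2}$ from \eqref{eq:S-to-E2}, conclude $\Delta q = O_{2N+2}$, check that $\psi$, $w$, $\Delta\varphi$ preserve the order $O_{2N+1}$, and then invoke Proposition~\ref{prop:S-2nd} to control $R_1,\dots,R_5$. The one place you are a bit more careful than the paper is Stage 2, where you explicitly verify that $\cE(\qn,\varphi)=O_{2N+1}$ (needed to meet the hypothesis of Proposition~\ref{prop:S-2nd}); the paper leaves this step implicit.
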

\begin{proof}
  From $\cE(q, \varphi) = O_{2N + 1}$ and \eqref{eq:S-to-E2}, we get 
  \[
	[S_q] - 1 = O_{2N + 2}. 
  \]
  Therefore the solution $\Delta q$ to $[S_{q + \Delta q}] = 1$ satisfied 
  \[
	\Delta q = O_{2N + 2}. 
  \]
  One checks by definition that 
  \[
	\psi = O_{2N + 1}, \quad
	w = O_{2N + 1}, \quad
	\Delta \varphi = O_{2N + 1}.
  \]
  To compute $\cE(q + \Delta q, \varphi + \Delta \varphi)$, we first apply Proposition \ref{prop:S-2nd} to get 
  \[
	R_1 + R_2 = O_{2K + 1}. 
  \]
  Since $[z\partial_z S_{\qn}]/z = D([S])/z = O_{2M + 1}$, we have
  \[
	R_3 = [z\partial_z S_{\qn}]/z + \nabla^+(h (R_1 + R_2)) = O_{2K + 1}, 
  \]
  \[
	R_4 = \partial_z\cE (w/\varphi_z) + R_3/\varphi_z = O_{4N + 1} + O_{2K + 1} = O_{2K + 1}, 
  \]
  \[
	R_5 = O(\Delta \varphi^2) = O_{4N + 2}. 
  \]
  Combine everything, we get
  \[
	\cE(q + \Delta q, \varphi + \Delta \varphi) = O_{2K + 1}.
  \]
\end{proof}

One consequence of Lemma~\ref{lem:order-inc} is that we have an alternative proof for Treschev's result.

\begin{corollary} \label{cor:treschev-alt}
There exists power series
\[
  q^\infty(t) = 1 + q_2 t^2 + \sum_{k = 2}^\infty q_{2k} t^{2k}, \quad
  \varphi^\infty(z, \bar{z}) =  z + \bar{z} + \sum_{n = 0}^\infty \sum_{j + k = 2n + 1} \varphi_{j, k} z^j \bar{z}^k
\]
solving the equation $\cE(q, \varphi) = 0$ formally.
\end{corollary}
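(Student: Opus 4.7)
The plan is to iterate the scheme of Section~5 and exploit the order-doubling property of Lemma~\ref{lem:order-inc} to construct $(q^\infty, \varphi^\infty)$ coefficient by coefficient. Start with the initial guess $(q^{(0)}, \varphi^{(0)}) = (1 + q_2 t^2, z + \bar{z})$, where $q_2$ is given by \eqref{eq:q2}, so that the computation in Section~2 yields $\cE(q^{(0)}, \varphi^{(0)}) = O_3$ and the symmetry $\varphi^{(0)} \circ I = \varphi^{(0)}$ is in place.

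Inductively, given $(q^{(n)}, \varphi^{(n)})$ with $\varphi^{(n)} \circ I = \varphi^{(n)}$ and $\cE(q^{(n)}, \varphi^{(n)}) = O_{2N_n + 1}$, apply the iterative step of Section~5 with truncation parameter $M_n = 2N_n$ to define $(q^{(n+1)}, \varphi^{(n+1)}) = (q^{(n)} + \Delta q^{(n)}, \varphi^{(n)} + \Delta \varphi^{(n)})$. The symmetrization \eqref{eq:symmetrize} ensures $\varphi^{(n+1)} \circ I = \varphi^{(n+1)}$, so the inductive hypothesis is preserved. Lemma~\ref{lem:order-inc} applied with $M = 2N_n$ yields $K = \min\{M_n, 2N_n\} = 2N_n$, so that $\cE(q^{(n+1)}, \varphi^{(n+1)}) = O_{4N_n + 1}$; thus setting $N_{n+1} = 2N_n$, the error order doubles at every step and $N_n = 2^n$.

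The same lemma also gives $\Delta q^{(n)} = O_{2N_n + 2}$ and $\Delta \varphi^{(n)} = O_{2N_n + 1}$. Hence for any fixed monomial index, the coefficients $q^{(n)}_k$ and $\varphi^{(n)}_{j,k}$ are eventually independent of $n$. Define $q^\infty$ and $\varphi^\infty$ coefficient-wise by these stable values. The evenness of $q^{(n)}$ and the odd-order, $I$-symmetric structure of $\varphi^{(n)}$ (both preserved by the scheme, since $\Delta q$ is even by Corollary~\ref{cor:Delta-q} and the symmetrized $\Delta \varphi$ only contains odd-order terms by Lemma~\ref{lem:order-inc}) pass to the limit, so $q^\infty$ and $\varphi^\infty$ have the required form.

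Finally, to see that $\cE(q^\infty, \varphi^\infty) = 0$ formally, observe that the coefficient of any fixed monomial in $\cE(q, \varphi)$ is a polynomial expression in finitely many Taylor coefficients of $q$ and $\varphi$. Given any $m$, choose $n$ so large that $2N_n + 1 > m$ and so that $q^\infty, \varphi^\infty$ agree with $q^{(n)}, \varphi^{(n)}$ through the orders on which this coefficient depends; then the order-$m$ coefficient of $\cE(q^\infty, \varphi^\infty)$ equals that of $\cE(q^{(n)}, \varphi^{(n)})$, which vanishes. There is no substantial obstacle here: Lemma~\ref{lem:order-inc} does all the heavy lifting, and the only point requiring care is this continuity statement for $\cE$ on the formal power series ring, which is immediate from its explicit form.
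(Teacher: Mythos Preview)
Your proof is correct and follows essentially the same approach as the paper's: start from $(\qu{0},\phiu{0})=(1+q_2 t^2,\, z+\bar z)$, iterate the scheme with $M_n=2N_n$ (equivalently, the paper's choice $M_0=2$, $M_{n+1}=2M_n$), invoke Lemma~\ref{lem:order-inc} to double the order at each step, and pass to the termwise limit. You are slightly more explicit than the paper about why the parity/symmetry structure persists and why $\cE$ is continuous for the coefficient-wise topology, but the argument is the same.
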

\begin{proof}
  Let $\qu{0} = 1 + q_2 t^2$ and $\phiu{0} = z + \bar{z}$. We have
  \[
	\cE(\qu{0}, \phiu{0}) = O_{2N_0 + 1}, 
  \]
  where $N_0 = 1$. Set $M_0 = 2$, and inductively $M_{n + 1} = 2M_n$, $N_{n + 1} = 2N_n$. Let $\Delta \qu{n}$, $\Delta \phiu{n}$ be chosen using the iterative step, and set $\qu{n+1} = \qu{n} + \Delta \qu{n}$, $\phiu{n+1} = \phiu{n} + \Delta \phiu{n}$. Then
  \[
	\Delta \qu{n} = O_{2N_n + 2}, \quad  \Delta \phiu{n} = O_{N_n + 1}, \quad
	\cE(\qu{n}, \phiu{n}) = O_{4N_n + 1}. 
  \]
  It's clear that $\qu{n} \to q^\infty$, $\phiu{n} \to \phi^\infty$ termwise, and 
  \[
   \cE(q^\infty, \varphi^\infty) = 0
  \]
  formally.
\end{proof}

\begin{remark} \
	\begin{enumerate}
	  \item This proof of Treschev's theorem does not immeditately give uniqueness, so we still refer to \cite{Tre13} for uniqueness. 
	  \item The proof of our main theorem uses the same iterative process, but to allow the norm estimates, we will choose different $\qu{0}$, $\phiu{0}$, $M_n$ and $N_n$.  
	\end{enumerate}
\end{remark}

\section{KAM induction}	

\begin{proposition}[Iterative Step]\label{prop:iterative}
  Let 
  \[
q = 1 + \sum_{k = 1}^\infty q_{2k} t^{2k}, \quad
\varphi = z + \bar{z} + \sum_{n = 1}^\infty \sum_{j + k = 2n + 1} \varphi_{j, k} z^j \bar{z}^k,
  \]
  with $\varphi \circ I = \varphi$. Given $C_0 > 0$, let $C_1$ be as in Lemma~\ref{lem:basic-est}. There exists $\rho_0 > 0$, $\epsilon_0 > 0$, and $C_4, C_5 > 1$, such that the following holds. Suppose $\rho \in (0, \rho_0)$, 
  \[
	\|q\|_{\mu\rho(1 + C_1 \rho^2), 3} \le C_0, \quad  \|\varphi\|_{\rho, 3} \le C_0,
  \]
  \[
	\|\cE(q, \varphi)\|_{\rho} < \epsilon \in (0, \epsilon_0),
  \]
 and that $\gamma_0, \gamma_1, \gamma_2 \in (0, 1)$ and $M \in \N$ verifies the following conditions:
  \begin{enumerate}[(a)]
	\item $C_4 M^{\frac{5}{2}} \rho^2 < 1$
	\item $\gamma_1^{2M} < \epsilon$. 
	\item $(1 + C_1 \rho^2)^{-M} > \gamma_2$. 
	\item $C_4 \mu \rho < \gamma_0 |s_0|$ and $1 + C_1 \rho^2 < 1/\gamma_0$, where $\displaystyle s_0 = \frac14(1 + 2q_2)$.
	\item $\epsilon < (1 - \gamma_0)^3 \gamma_0^3\gamma_2^3 \rho^3$. 
  \end{enumerate}
  Then for $\qn = q + \Delta q$, $\phin = \varphi + \Delta \varphi$ as before, $\bar{\gamma} = \gamma_0^4 \gamma_1 \gamma_2$, and $\rhon = \bar{\gamma} \rho$, there exists $C_\gamma > 0$ depending on $\gamma_0, \gamma_1, \gamma_2, C_0$, and $\tau$, such that 

  we have
 \begin{enumerate}
   \item  $\|\Delta q\|_{\mu \rhon (1 + C_1 (\rhon)^2), 3} \le C_{\gamma} \, \epsilon \rho^{-3}$.
   \item $\|\Delta \varphi\|_{\rhon, 3} \le C_{\gamma} \, \epsilon \rho^{-3}$.
   \item $\|\cE(\qn, \phin)\|_{\rhon} \le  C_{\gamma} \, \epsilon^2 \rho^{-7}$.
 \end{enumerate}
\end{proposition}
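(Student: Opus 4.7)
The plan is to chain together the estimates collected in Lemma~\ref{lem:basic-est} and Proposition~\ref{prop:bound-Delta-q}, following the three-step recipe outlined at the beginning of Section~5. Claims (1) and (2) are obtained by tracking $\|\Delta q\|$ and $\|\Delta \varphi\|$ through the constructions \eqref{eq:Delta-q} and \eqref{eq:solution-coh}--\eqref{eq:symmetrize}. Claim (3) reduces, via the decomposition $\cE(\qn,\phin) = \tfrac12(R_4 + R_4\circ I) + R_5$ of Proposition~\ref{prop:dphi}, to estimating the five remainders $R_1,\ldots,R_5$ through items 10--13 of Lemma~\ref{lem:basic-est}.

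First, I would construct $\Delta q$. The identity \eqref{eq:S-to-E2} together with items 6 and 8 of Lemma~\ref{lem:norm-properties} gives $\|[S_q]-1\|_\rho \lesssim \|\cE(q,\varphi)\|_\rho < \epsilon$. By condition~(a), Proposition~\ref{prop:bound-Delta-q} applies on the radius $\rho_1 = \rho/(1+C_1\rho^2)^M \ge \gamma_2\rho$ (using (c)), yielding $\|\Delta q\|_{\mu\rho_1} \lesssim \sqrt{M}\,\epsilon$. A Cauchy estimate on the further contracted disk (item~4 of Lemma~\ref{lem:basic-est}), with margin $\gamma_0$ supplied by condition~(d), produces the $\|\cdot\|_{\cdot,3}$ bound of claim~(1) and the advertised $\rho^{-3}$ loss. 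Next, $\Delta\varphi$ is controlled by applying items~6 through 8 of Lemma~\ref{lem:basic-est} in sequence: linearity of $\cE$ in $q$ gives $\cE(\qn,\varphi) = \cE(q,\varphi) + \cE(\Delta q,\varphi) = O(\epsilon)$ on the contracted disk; item~6 then bounds $\psi$; item~7 bounds $h$ and $1/h$ (where the smallness requirement \eqref{eq:small-q} is precisely what condition~(d) enforces); item~8 bounds $w$. Symmetrization preserves norms by Lemma~\ref{lem:phi-symmetry}, and a final Cauchy estimate delivers claim~(2).

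For claim~(3), I apply Proposition~\ref{prop:dphi} to write $\cE(\qn,\phin) = \tfrac12(R_4 + R_4\circ I) + R_5$. Among the five remainders, only $R_1$ is a priori linear in the error, its norm being controlled by $\|[S_{\qn}]-1\|$. The decisive observation is that $\Lambda_{2M}([S_{\qn}]-1)=0$ by construction of $\Delta q$, so item~5 of Lemma~\ref{lem:norm-properties} contributes a contraction factor $\gamma_1^{2M}<\epsilon$ (condition~(b)), making $R_1$ quadratically small. The remaining $R_2, R_3, R_4, R_5$ are explicitly quadratic in $\|\cE\|$ or $\|\Delta\varphi\|$ by items~10, 12, 13 of Lemma~\ref{lem:basic-est}, and condition~(e) is what guarantees the smallness hypotheses on $\kappa$ and $h$ survive the contraction to $\rhon = \bar\gamma\rho$. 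Accumulating the worst $\rho^{-1}$ losses from the Cauchy estimates and from the $[\partial_z S]/z$ factors produces the $\rho^{-7}$ loss in claim~(3).

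The principal obstacle is the tight bookkeeping of the three contraction parameters $\gamma_0, \gamma_1, \gamma_2$, each absorbing a specific source of loss: four factors of $\gamma_0$ handle the Cauchy or Diophantine contractions incurred while estimating $\psi, h, w, \kappa$; the factor $\gamma_1$ supplies the crucial truncation gain $\gamma_1^{2M}$ controlling $R_1$; and $\gamma_2$ absorbs the radius contraction $(1+C_1\rho^2)^{-M}$ forced by Proposition~\ref{prop:bound-Delta-q}. The product $\bar\gamma = \gamma_0^4\gamma_1\gamma_2$ is therefore the minimal joint contraction that allows all five remainder bounds to close simultaneously, and conditions (a)--(e) are precisely what is needed to validate every intermediate smallness hypothesis along the way.
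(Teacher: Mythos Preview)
Your proposal is correct and follows essentially the same route as the paper's own proof: the same chain of estimates through Lemma~\ref{lem:basic-est} and Proposition~\ref{prop:bound-Delta-q}, the same decomposition $\cE(\qn,\phin)=\tfrac12(R_4+R_4\circ I)+R_5$, and the same key observation that the truncation $\Lambda_{2M}([S_{\qn}]-1)=0$ together with condition~(b) converts the a~priori linear term $R_1$ into a quadratic one. A few attributions are slightly off (the Cauchy estimate is item~4 of Lemma~\ref{lem:norm-properties}, not of Lemma~\ref{lem:basic-est}; condition~(e) is used to guarantee $\epsilon_1<1$ so that $\|\qn\|$ stays bounded, while condition~(d) is what directly secures \eqref{eq:small-q}), but the overall architecture and the role of each $\gamma_i$ match the paper exactly.
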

\begin{proof}
  We will again use the $f \lesssim g$ notation to denote $f \le Cg$ for $C>0$ depending only on $C_0$ and $c$. Most of the estimates comes from Lemma \ref{lem:basic-est}.

We have
\[
  \|\varphi_z\|_\rho \lesssim 1, 
\]
\[
  \|[S_q] - 1\|_\rho = \|\bar{D}(z \Pi_+(\cE(q, \varphi) \varphi_z))\|_\rho 
  \lesssim \|z \Pi_+(\cE(q, \varphi) \varphi_z))\|_\rho 
  \lesssim \rho \|\cE \varphi_z\|_\rho \lesssim \epsilon \rho.
\]
Set $\rho_1 = \frac{\rho}{(1 + C_1 \rho^2)^M}$ and $\rho_2 = \mu \rho_1$, the conditions of Proposition~\ref{prop:bound-Delta-q} is satisfied since
\[
  C_4 M^{\frac{5}{2}} \rho^2 < 1.
\]
As a result, 
\[
  \|\Delta q\|_{\rho_2} \lesssim \sqrt{M} \rho \cdot \epsilon \le \epsilon, 
\]
noting that the assumption
\[
  C_4 M^{\frac{5}{2}} \rho^2 < 1, \quad C_4 > 1
\]
implies $\sqrt{M} \rho < 1$. Condition (c) implies $\gamma_2 \rho < \rho_1$. Now set 
\[
  \bar{\rho}_1 = \gamma_0 \rho_1, \quad \bar{\rho}_2 = \mu \bar{\rho}_1 (1 + C_1 \bar{\rho}_1^2), 
\]
then $\bar{\rho}_2 \le \mu \rho_1 \gamma_0 (1 + C_1 \rho^2) < \rho_2$ (we used the second part of condition (d)). As a result, $\|\Delta q\|_{\bar{\rho}_2} \le \|\Delta q\|_{\rho_2} \lesssim \epsilon$. 
Recall that $\lambda$ satisfies the Diophantine condition \eqref{eq:diophantine} when $k=1$, i.e. $\mu=|\lambda-1|\geq c$. As a result, 
$\displaystyle \bar{\rho_2}^{-1} < \mu^{-1} \bar{\rho}^{-1} \leq \frac{1}{c}\bar{\rho}^{-1}\lesssim \bar{\rho}^{-1}$ and we also get
\[
  \|\Delta q\|_{\gamma_0 \bar{\rho}_2, 3}  
  \le \frac{\|\Delta q\|_{\bar\rho_2}}{(1 - \gamma_0)^3 \bar{\rho}_1^3}
  \lesssim \frac{\epsilon}{(1 - \gamma_0)^3 \bar{\rho}_1^3}.
\]
Then
\[
  \|\cE(\qn, \varphi)\|_{\gamma_0 \bar\rho_1}
  \le \|\cE(q, \varphi)\|_\rho + \|\cE(\Delta q, \varphi)\|_{\gamma_0 \bar\rho_1}
  \lesssim \epsilon + \|\Delta q\|_{\gamma_0 \bar\rho_2, 1} 
  \lesssim \frac{\epsilon}{(1 - \gamma_0)^3 \bar{\rho}_1^3}
\]
To abreviate notations let us set
\begin{equation}  \label{eq:esp1}
  \epsilon_1 = \frac{\epsilon}{(1 - \gamma_0)^3 \bar{\rho}_1^3}.
\end{equation}

We now show that $[S_{\qn}] - 1$ is of order $\epsilon^2$ when measured on a smaller radius. First of all
\[
  \|[S_{\qn}] - 1\|_{\gamma_0 \bar{\rho}_1}
  \le \|[S_q] - 1\|_{\gamma_0 \bar{\rho}_1} + \|[S_{\Delta q}]\|_{\gamma_0 \bar\rho_1}
  \lesssim \rho\epsilon + \|\Delta q\|_{\gamma_0 \bar\rho_2}
  \lesssim \epsilon. 
\]
Secondly,
\[
  \|[S_{\qn}] - 1\|_{\gamma_0 \gamma_1 \bar\rho_1}
   = \left \| [S_{\qn}] - \Lambda_{2M} [S_{\qn}] \right\|_{\gamma_0 \gamma_1 \bar\rho_1}
  \le \gamma_1^M \|[S_{\qn}] - 1\|_{\gamma_0 \bar\rho_1} \lesssim \epsilon^2,
\]
where we used condition (b).

We are now ready to estimate $\Delta \varphi$. By Lemma~\ref{lem:basic-est}, we have $\|h\|_{\rho_1}, \|1/h\|_{\rho_1} \lesssim 1$ as long as \eqref{eq:small-q} holds with $\rho = \bar{\rho}_1$. To ensure this condition we need to impose
\[
  \epsilon < (1 - \gamma_0)^3 \gamma_0^3 \gamma_2^3 \rho^3, 
\]
so that $\epsilon_1  <  1$, to get  
\[
  \|\qn\|_{\gamma_0 \bar{\rho}_2} \lesssim  \epsilon_1 < 1. 
\]
According to Item 7 of Lemma \ref{lem:basic-est}, \eqref{eq:small-q} holds if 
\[
  C \mu \bar{\rho}_1 (1 + C_1 \bar{\rho}^2) <
  C \mu \bar{\rho}_1 \gamma_0^{-1} < \frac12 |s_0| 
\]
for some constant $C$ dependding only on $C_0, c$ and $\tau$. Noting that $1 + C_1 \bar{\rho}^2 < \gamma_0^{-1}$, this condition is ensured if
\[
  \rho_0 < \frac{\gamma_0}{2C\mu} |s_0|.
\]
This is guaranteed by condition (d).

Denote $\delta = \log(1/\gamma_0)$. From item 6 of Lemma~\ref{lem:basic-est},
\[
  \|\psi\|_{\gamma_0 \bar\rho_1} \lesssim \delta^{-\tau} \epsilon_1, 
\]
and
\[
  \|\Delta \varphi\|_{\gamma_0^2 \bar\rho_1} = \|w\|_{\gamma_0^2 \bar\rho_1}
  \lesssim \delta^{-\tau} \|\psi\|_{\gamma_0 \bar\rho_1} \|1/h\|_{\gamma_0 \bar\rho_1} 
  \lesssim \delta^{-2\tau} \epsilon_1. 
\]

Finally, we will estimte $\cE(\qn, \phin)$. This requires estimating the remainder terms $R_1$ to $R_5$. We have, by item 10-13 in Lemma~\ref{lem:basic-est}
\[
  \|R_1\|_{\gamma_0^2 \gamma_1 \bar\rho_1} 
  \lesssim \frac{1}{\delta \gamma_0^2 \gamma_1 \bar\rho_1} \|[S_{\qn}] - 1\|_{\gamma_0 \gamma_1 \bar\rho_1} 
  \lesssim \frac{\epsilon^2}{\delta \gamma_0^2 \gamma_1 \bar\rho_1}.
\]
\[
  \|\kappa - [\kappa]\|_{\gamma_0^3 \bar\rho_1} 
  \lesssim \frac{\|\cE\|_{\gamma_0 \bar{\rho}_1}}{\delta^\tau (1 - \gamma_0) \bar{\rho}_1}
  = \frac{\epsilon_1}{\delta^\tau (1 - \gamma_0) \bar{\rho}_1},
\]
\[
  \|R_2\|_{\gamma_0^3 \gamma_1 \bar\rho_1} 
  \lesssim \|\psi\|_{\gamma_0 \bar\rho_1} \|\kappa - [\kappa]\|_{\gamma_0^3 \bar\rho_1} 
  \lesssim \frac{\epsilon_1^2}{\delta^{2\tau} (1 - \gamma_0) \bar{\rho}_1}.
\]
\[
  \|R_3\|_{\gamma_0^3  \gamma_1 \bar\rho_1}
  \lesssim \frac{\|[S_{\qn}] - 1\|_{\gamma_0^2 \gamma_1 \bar\rho_1}}{\gamma_0^3 \gamma_1 \delta \bar{\rho}_1}  + \|R_1 + R_2\|_{\gamma_0^3 \bar\rho_1}  
  \lesssim \frac{\epsilon^2_1}{\gamma_0^3 (1 - \gamma_0) \gamma_1 \bar\rho_1 \delta^{2\tau}},
\]
\[
  \begin{aligned}
  \|R_4\|_{\gamma_0^3 \gamma_1 \bar\rho_1} 
  & \lesssim \frac{\|\cE(\qn, \varphi)\|_{\gamma_0 \gamma_1 \bar\rho_1}\|w\|_{\gamma_0 \gamma_1 \bar\rho_1}}{(1 - \gamma_0) \gamma_0^2 \gamma_1 \bar{\rho}_1 }  + \|R_3\|_{\gamma_0^2 \gamma_1 \bar\rho_1} \\
  & \lesssim \frac{\epsilon_1^2}{(1 - \gamma_0)\gamma_0^2 \delta^{2\tau}\gamma_1 \bar{\rho}_1}
  + \frac{\epsilon^2_1}{\gamma_0^3 (1 - \gamma_0) \gamma_1 \bar\rho_1 \delta^{2\tau}}
  \lesssim \frac{\epsilon^2_1}{\gamma_0^3 (1 - \gamma_0) \gamma_1 \bar\rho_1 \delta^{2\tau}},
  \end{aligned}
\]
\[
  \|R_5\|_{\gamma_0^3 \gamma_1 \bar\rho_1} 
  \lesssim \|\qn\|_{\gamma_0^3 \gamma_1 \bar\rho_2, 3}\|\Delta \varphi\|_{\gamma_0^3 \gamma_1 \bar\rho_1}^2 
  \lesssim \frac{\epsilon_1^2}{\delta^{4\tau}}.
\]
We now have
\[
  \begin{aligned}
  \|\cE(\qn, \phin)\|_{\gamma_0^3 \gamma_1 \bar{\rho}_1}
  & \le \|R_4\|_{\gamma_0^3 \gamma_1 \bar{\rho}_1} 
      + \|R_5\|_{\gamma_0^3 \gamma_1 \bar{\rho}_1}
  \lesssim \frac{\epsilon^2_1}{\gamma_0^3 (1 - \gamma_0) \gamma_1 \bar\rho_1 \delta^{4\tau}} \\
  & \le \frac{\epsilon^2}{(1 - \gamma_0)^6 \gamma_0^3 \gamma_1 \bar{\rho}_1^7 \delta^{4\tau}}
  = \frac{\epsilon^2}{(1 - \gamma_0)^6 \gamma_0^3 \gamma_1 \gamma_2^7 (\log(1/\gamma))^{4\tau} \rho^7}\\
  &\lesssim C(\gamma_0,\gamma_1,\gamma_2,\tau,c) \frac{\epsilon^2}{\rho^7}. 
  \end{aligned}
\]
We note that all the constants are explicit in $\gamma_0, \gamma_1, \gamma_2, \tau$ except for an additional unspecified constants that depends only on $C_0$.
\end{proof}

Let $q^\infty, \varphi^\infty$ be the formal solution to $\cE(q, \varphi) = 0$. This follows from Treschev's theorem, and we also provided an alternative proof in Corollary~\ref{cor:treschev-alt}. Since $q^\infty, \varphi^\infty$ is a formal solution, we can truncate them to polynomials, denoted
\[
  \qu{0}, \phiu{0}, 
\]
so that 
\begin{equation}  \label{eq:truncate}
  \cE(\qu{0}, \phiu{0}) = O_{15}.	
\end{equation}
We will use this pair as the initial value of our KAM iteration. 

\begin{proposition}[KAM induction] \label{prop:KAM}
  Let $\qu{0}, \phiu{0}$ be the polynomials given in \eqref{eq:truncate}. There exist $C_6(\lambda) > 1$, $\rhou{0} > 0$ such that if we choose $\gamma_0 = \gamma_1 = \gamma_2$ satisfying $\bar{\gamma} = \gamma_0^5 < (2/3)^{5/4}$, and
  \[
	\epsu{0} = C_6 \left( \rhou{0}\right)^{15}, \quad
	M_0 = \left\lfloor \frac{\log(1/\rhou{0})}{\log(1/\gamma_1)}\right \rfloor + 1, \quad
	N_0 = 7, 
  \]
  and for $n \ge 0$, 
  \[
	M_{n + 1} = \floor{3M_n/2} + 1, \quad 
	N_{n + 1} = \min\{2N_n, M_n\}, \quad
	\rhou{n+1} = \bar{\gamma} \rhou{n}, \quad 
	\epsu{n+1} = \left( \epsu{n}\right)^{\frac32}, 
  \]
	\[
    \qu{n+1} = \qu{n} + \Delta \qu{n}, \quad \phiu{n+1} = \phiu{n} + \Delta \phiu{n}. 
  \]
  Then for every $n \ge 0$, the conditions of Proposition \ref{prop:iterative} are satisfied for every $n$ and
  \[
	\cE(\qu{n}, \phiu{n}) < \epsu{n}.
  \]
  Moreover, we have
 \begin{equation}  \label{eq:Delta-order}
	\Delta \qu{n} = O_{2N_{n} + 2}, \quad \Delta \phiu{n} = O_{2N_n + 1}, \quad n \ge 0.
 \end{equation} 
\end{proposition}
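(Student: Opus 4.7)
The plan is to proceed by induction on $n$, using Proposition \ref{prop:iterative} as the single step. The inductive hypothesis at stage $n$ will be a package of five assertions: (i) $\|\cE(\qu{n}, \phiu{n})\|_{\rhou{n}} < \epsu{n}$; (ii) the five conditions (a)--(e) of Proposition \ref{prop:iterative} with $\rho = \rhou{n}$, $M = M_n$ and the common $\gamma_i$; (iii) the bounds $\|\qu{n}\|_{\mu \rhou{n}(1 + C_1 \rhou{n}^2), 3}, \|\phiu{n}\|_{\rhou{n}, 3} \le C_0$ needed to invoke that proposition; and (iv) the order relation $\cE(\qu{n}, \phiu{n}) = O_{2N_n + 1}$, which feeds Lemma \ref{lem:order-inc} to produce \eqref{eq:Delta-order} and close (iv) at the next step via $N_{n+1} = \min\{2N_n, M_n\}$.

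For the base case, (iv) is exactly \eqref{eq:truncate} with $N_0 = 7$, and Lemma \ref{lem:norm-properties}(5) then gives $\|\cE(\qu{0}, \phiu{0})\|_{\rhou{0}} \le C \rhou{0}^{15}$, so choosing $C_6 \ge C$ secures (i). Assertion (iii) is ensured by taking $\rhou{0}$ small enough that the polynomial norms stay below $C_0$. Conditions (a), (c), (d), (e) at $n=0$ are direct smallness consequences: (a) amounts to $(\log(1/\rhou{0}))^{5/2} \rhou{0}^2 \ll 1$, (c) to $M_0 \log(1 + C_1 \rhou{0}^2) \to 0$ (which uses $M_0 \rhou{0}^2 \log(1/\rhou{0}) \to 0$), (d) to $\rhou{0} \ll |s_0|/\mu$, and (e) to $\rhou{0}^{15} \ll \rhou{0}^3$. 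Condition (b) at $n=0$ is the power comparison $\gamma_1^{2M_0} < \epsu{0}$, which with the chosen $M_0 = \lfloor \log(1/\rhou{0}) / \log(1/\gamma_1) \rfloor + 1$ and $\epsu{0} = C_6 \rhou{0}^{15}$ reduces to a power inequality in $\rhou{0}$ that is arranged for $\rhou{0}$ small.

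For the inductive step, assuming stage $n$, Proposition \ref{prop:iterative} delivers $\|\Delta \qu{n}\|, \|\Delta \phiu{n}\| \le C_\gamma \epsu{n} \rhou{n}^{-3}$ and $\|\cE(\qu{n+1}, \phiu{n+1})\|_{\rhou{n+1}} \le C_\gamma \epsu{n}^2 \rhou{n}^{-7}$. Closing (i) at stage $n+1$ requires $C_\gamma \epsu{n}^2 \rhou{n}^{-7} \le \epsu{n}^{3/2}$, i.e.\ $C_\gamma \epsu{n}^{1/2} \le \rhou{n}^7$, which in logarithmic form is implied by $\tfrac{15}{2}(3/2)^n \ge 7 + O(n/\log(1/\rhou{0}))$ and holds for all $n \ge 0$ once $\rhou{0}$ is small enough; in particular the estimate improves exponentially as $n$ grows. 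Closing (iii) uses that $\epsu{n} \rhou{n}^{-3}$ decays super-exponentially in $n$, so the telescoping sums $\sum_k \|\Delta \qu{k}\|$ and $\sum_k \|\Delta \phiu{k}\|$ stay below $C_0$. Assertion (iv) at stage $n+1$, together with the claimed \eqref{eq:Delta-order}, is immediate from Lemma \ref{lem:order-inc}.

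The main obstacle is propagating conditions (a) and (b) of Proposition \ref{prop:iterative} along the induction, which is what dictates the delicate constants. For (a), the recursions $M_{n+1} \le (3/2)M_n + 2$ and $\rhou{n+1} = \bar\gamma \rhou{n}$ yield
\[
  M_{n+1}^{5/2} \rhou{n+1}^2 \le (3/2)^{5/2} \bar\gamma^2 \cdot M_n^{5/2} \rhou{n}^2 \cdot (1 + o(1)),
\]
and the imposed $\bar\gamma < (2/3)^{5/4}$ is exactly what forces the geometric ratio $(3/2)^{5/2}\bar\gamma^2 < 1$, making $M_n^{5/2} \rhou{n}^2$ a decaying sequence. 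For (b), the recursion $M_{n+1} = \lfloor 3M_n/2 \rfloor + 1$ multiplies $M_n$ by $3/2$ per step, matching the $3/2$ growth rate of $\log(1/\epsu{n+1})=(3/2)\log(1/\epsu{n})$, so $\gamma_1^{2M_n}/\epsu{n}$ is essentially $n$-independent and (b) propagates from the base case. Conditions (c), (d), (e) at stage $n+1$ follow by the same scheme, since $M_n \rhou{n}^2$, $\rhou{n}$ and $\epsu{n}\rhou{n}^{-3}$ all decay. With all five conditions carried forward and (i)-(iv) closed, the induction is complete.
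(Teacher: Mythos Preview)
Your approach mirrors the paper's almost exactly: the same inductive package (norm bounds, error bound, conditions (a)--(e), order relation), the same appeal to Proposition~\ref{prop:iterative} for the single step, and Lemma~\ref{lem:order-inc} for propagating the orders \eqref{eq:Delta-order}. The propagation arguments you give for (a), (c), (d), (e) and for closing (i) and (iii) are correct in outline and match the paper's.

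There is, however, a genuine gap in your verification of condition (b) at the base step. With the stated choice $M_0 = \lfloor \log(1/\rhou{0})/\log(1/\gamma_1)\rfloor + 1$ one has $M_0 \log(1/\gamma_1) \ge \log(1/\rhou{0})$, hence $\gamma_1^{M_0} \le \rhou{0}$ and $\gamma_1^{2M_0} \le (\rhou{0})^2$. But (b) demands $\gamma_1^{2M_0} < \epsu{0} = C_6(\rhou{0})^{15}$, and $(\rhou{0})^2 < C_6(\rhou{0})^{15}$ \emph{fails} for small $\rhou{0}$. So your claim that it ``reduces to a power inequality in $\rhou{0}$ that is arranged for $\rhou{0}$ small'' is the wrong way around. (The paper's own proof also glosses over this, asserting $\gamma_1^{M_0} = (\rhou{0})^{15}$ without justification.) The fix is easy and harmless: replace the numerator $\log(1/\rhou{0})$ in the definition of $M_0$ by $8\log(1/\rhou{0})$ (any constant exceeding $15/2$ works), giving $\gamma_1^{2M_0} \le (\rhou{0})^{16} < \epsu{0}$; conditions (a), (c), (d), (e) at $n=0$ only use $M_0 = O(\log(1/\rhou{0}))$ and are unaffected, and your inductive propagation of (b) via $\gamma_1^{2M_{n+1}} \le (\gamma_1^{2M_n})^{3/2} < \epsu{n}^{3/2} = \epsu{n+1}$ then goes through unchanged.
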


\begin{proof}
  Let $\rho_0$ be as in Proposition \ref{prop:iterative}.
  Since $\qu{0}, \phiu{0}$ are \emph{explicit} polynomials, $\cE(\qu{0}, \phiu{0}) = O_{15}$ is an explict polynomial. There exists a constant $C_6 > 0$ such that
  \[
	\|\cE(\qu{0}, \phiu{0})\|_{\rho} \le C_6 \rho^{15}
  \]
  for every $\rho \in (0, \rho_0)$.

  We will show inductively that, there existss $C_0 > 0$ and $\rho_{[0]} \in (0, \rho_0)$ such that for all $n \ge 0$,
  \begin{enumerate}[(1)]
   \item
	 \[
	   \|\qu{n}\|_{\mu \rhou{n}(1 + C_1(\rhou{n})^2), 3} \le (1 - 2^{n+1}) C_0, \quad
	   \|\phiu{n}\|_{\rhou{n}, 3} \le (1 - 2^{n+1})C_0. 
	 \]
   \item $\|\cE(\qu{n}, \phiu{n})\|_{\rhou{n}} \le \epsu{n}$. 
   \item Conditions (a) - (e) in Proposition~\ref{prop:iterative} are satisfied with $M = M_n$, $\rho = \rhou{n}$, $\epsilon = \epsu{n}$.
 \end{enumerate} 
 Set
 \[
   C_0 = 2\max\{ \|\qu{0}\|_{\mu \rho_0( 1+ C_1), 3}, \|\phiu{0}\|_{\rho_0, 3}\}.
 \]
 We will set $\rhou{0}$ sufficiently small, depending on conditions that depends only on uniform constants. The estimate
\[
  \|\cE(\qu{0}, \phiu{0})\|_{\rhou{0}} \le \epsu{0}
\]
holds by definition. We now check that conditions (a) - (e) of Proposition \ref{prop:iterative} holds for $n = 0$. For condition (a), we have
\[
  C_4 M_0^{\frac{5}{2}} \left( \rhou{0}\right)^2 
  \lesssim (\log(1/\rhou{0})) \left( \rhou{0}\right)^2  \to 0 
\]
as $\rhou{0} \to 0$, therefore (a) can be satisfied by choosing $\rhou{0}$ small enough. For (b), note that by definition,
\[
  \gamma_1^{M_0} = \left( \rhou{0}\right)^{15} < \epsu{0}.
\]
For condition (c), we have
\[
  (1 + C_1 (\rhou{0})^2)^{-M_0} = e^{- M_0 \log(1 + C ( \rhou{0})^2)}
  \ge e^{- \frac{\log(1/\rhou{0})}{\log(1/\gamma_1)} \log(1 + C (\rhou{0})^2)}
  \to 1 
\]
as $\rhou{0} \to 0$, therefore the condition can be satisfied by taking small enough $\rhou{0}$. Condition (d) can be easily satisfed by choosing $\rhou{0}$ small. Finally, condition (e) is implied by 
\[
  \epsu{0} = C_6 (\rhou{0})^{15} < (1 - \gamma_0)^3 \gamma_2^2 (\rhou{0})^3 
\]
which is clearly true for $\rhou{0}$ small enough.
We now check conditions (a) - (e) for $n \ge 1$.
Indeed, if (a) is satisfied for $M_n$, $\rhou{n}$, it is satisfied for $M_{n+1}$, $\rhou{n+1}$ as long as
  \[
	\bar{\gamma}^2\left( \frac32\right)^{\frac{5}{2}} < 1, \text{ or }
	\bar{\gamma}^2 < \left( \frac23\right)^{\frac{5}{2}}.
  \]
  For (b), note that
  \[
	\gamma_1^{2M_{n+1}} < \gamma_1^{2\frac{3M_n}{2} } <  (\epsu{n})^{\frac32} = \epsu{n+1}, 
  \]
  therefore (b) is satisfied for all $n$ by induction. To verify (c), we will show that there exists $\rho_2 > 0$ such that for all $\rho < \rho_2$ and $M > 0$, 
  \[
	(1 + C_1(\bar{\gamma}\rho)^2)^{-3M/2} > (1 + C_1 \rho^2)^{-M}.
  \]
  This means if $\rhou{0} < \rho_2$, then (c) is satisfied for all $n \ge 0$ by induction. Indeed, taking $\log$ to both sides and canceling $M$, it suffices to show
  \[
    \frac{\frac32\log(1 + C_1\bar{\gamma}^2 \rho^2)}{\log(1 + C_1 \rho^2)} < 1
  \]
  for $\rho$ small enough. Noting that the limit of the left hand side as $\rho \to 0$ is $\frac32 \bar{\gamma}^2$, the claim holds as long as $\bar{\gamma}^2 < \frac23$. Condition (d) is satisfied for all $n$ since $\rhou{n}$ is decreasing. To check condition (e) for $n \ge 1$, we claim that
  \[
	\epsu{n} \left( \rhou{n}\right)^{-3}
  \]
  is decreasing. Since
  \[
	\epsu{n+1} \left( \rhou{n+1}\right)^{-3}
	= \epsu{n} \left( \rhou{n}\right)^{-3} \cdot (\epsu{n})^{\frac12} \bar\gamma^{-1}, 
  \]
  the claim holds if $(\epsu{n})^{\frac12} \bar\gamma^{-1} < 1$, which is ensured by choosing $\rhou{0}$ small.

  Finally, we verify the inductive assumption (1) and (2). Suppose they are satisfied for step $n$, we apply Proposition~\ref{prop:iterative} to get 
  \[
	\|\cE(\qu{n+1}, \phiu{n+1})\|_{\rhou{n}} \le C_\gamma \frac{(\epsu{n})^2}{(\rhou{n})^7}.
  \]
  To verify (2), it suffices to show
  \[
     C_\gamma \frac{(\epsu{n})^{\frac12}}{(\rhou{n})^7} < 1.
  \]
  At $n = 0$, this is possible since $\epsu{0} \lesssim (\rhou{0})^{15}$, therefore $\frac{(\epsu{0})^{\frac12}}{(\rhou{0})^7} \lesssim (\rhou{0})^{\frac12} < 1$ if $\rhou{0}$ is small enough. For $n \ge 1$, this hold by induction, since
  \[
	\frac{(\epsu{n+1})^{\frac12}}{(\rhou{n+1})^7} 
	= \frac{(\epsu{n})^{\frac12}}{(\rhou{n})^7} \cdot \frac{(\epsu{n})^{\frac14}}{\bar{\gamma}^7} 
	< \frac{(\epsu{n})^{\frac12}}{(\rhou{n})^7} 
  \]
  as long as $\epsu{n} < \bar{\gamma}^{28}$. This is ensured by choosing $\rhou{0}$ small at step $0$ and by induction every step afterwards. Assumption (2) is verified.
  
  We now come to assumption (1). By Proposition~\ref{prop:iterative}, 
  \[
	\|\Delta \qu{n}\|_{\mu \rhou{n}(1 + C_1(\rhou{n})^2), 3} 
	< C_\gamma \frac{\epsu{n}}{(\rhou{n})^3},
  \]
  \[
	\|\Delta \phiu{n}\|_{\rhou{n}, 3} 
	\le C_\gamma \frac{\epsu{n}}{(\rhou{n})^3}.
  \]
  We claim that  the right hand side of both inequalities are bounded by $2^{-(n+1)} C_0$. This is the case at $n = 0$ by choosing $\rhou{0}$, hence $\epsu{0}$ small. Moreover, since we can always ensure $(\epsu{n})^{\frac12} \bar\gamma^{-1} < \frac12$, we have
  \[
	\epsu{n+1} \left( \rhou{n+1}\right)^{-3}
	= \epsu{n} \left( \rhou{n}\right)^{-3} \cdot (\epsu{n})^{\frac12} \bar\gamma^{-1}
	< \frac12 \epsu{n} \left( \rhou{n}\right)^{-3} 
  \]
  and therefore the same claim hold by induction. 

  We have completed the inductive step, therefore the inductive claims (1) - (3) hold for all $n$.
  
  Finally, let's prove \eqref{eq:Delta-order}. Since $\min\{M_n, 2N_n\} = 2N_n$, we apply Lemma \ref{lem:order-inc} to get 
  \[
	\cE(\qu{n}, \phiu{n}) = O_{2N_n + 1}
  \]
  for all $n$. \eqref{eq:Delta-order} follows.
\end{proof}

The next statement implies our main theorem.
\begin{corollary}
  The formal power series $\qu{n}$	and $\phiu{n}$ from Proposition \ref{prop:KAM} converge term-wise to $q^\infty = \sum_{k = 0}^\infty a_{2k} t^{2k}$ and $\varphi^\infty = \sum_{j, k} b_{j, k} z^j \bar{z}^k$. In the formal sense, we have
  \[
	\cE(q^\infty, \varphi^\infty) = 0. 
  \]
  Moreover, for every $\displaystyle \alpha > \frac{5}{4}$ there exists $c_1, c_2 > 0$ such that 
  \[
	|q^\infty_{k}| \le e^{ c_1 k +  \alpha k \log k}, \quad 
	|\varphi^\infty_{j, k}| \le e^{c_2 (j + k) + \alpha (j + k) \log (j + k)}. 
  \]
  This implies the series $q^\infty$ and $\varphi^\infty$ are of Gevrey order $1 + \alpha$. 
\end{corollary}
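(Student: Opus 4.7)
The plan is to harvest three consequences from Proposition~\ref{prop:KAM}: termwise stabilization of the iterates, the formal vanishing of $\cE$ in the limit, and the Gevrey growth of the limit coefficients. The first two are soft. By \eqref{eq:Delta-order}, $\Delta \qu{n} = O_{2N_n + 2}$ and $\Delta \phiu{n} = O_{2 N_n + 1}$ with $N_n \to \infty$, so for each fixed index the corresponding coefficient of $\qu{n}$ (resp.\ $\phiu{n}$) is eventually independent of $n$; this defines $q^\infty$ and $\varphi^\infty$ termwise. Lemma~\ref{lem:order-inc} gives $\cE(\qu{n}, \phiu{n}) = O_{2 N_n + 1}$, so every coefficient of $\cE(\qu{n}, \phiu{n})$ eventually vanishes. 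Since $\cE$ depends on any bounded range of coefficients of its inputs in a termwise finite way, passing to the limit yields $\cE(q^\infty, \varphi^\infty) = 0$ as formal series.

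The quantitative part rests on the uniform analytic bounds $\|\phiu{n}\|_{\rhou{n}, 3} \le C_0$ and $\|\qu{n}\|_{\mu \rhou{n}(1 + C_1 (\rhou{n})^2), 3} \le C_0$ established inductively in Proposition~\ref{prop:KAM}, combined with the Cauchy-type estimate $|f_{j,k}| \le \rho^{-(j+k)} \|f\|_\rho$ from Lemma~\ref{lem:norm-properties}. For each $m \ge 1$, let $n(m)$ be the first index with $2 N_{n(m)} \ge m$. Termwise stabilization gives $\varphi^\infty_{j, k} = \phiu{n(m)}_{j, k}$ for $j + k = m$ (and similarly for $q^\infty_k$), so
\[
  |\varphi^\infty_{j, k}| \le C_0 \, (\rhou{n(m)})^{-m} = C_0 \, \bar{\gamma}^{-n(m)\, m} (\rhou{0})^{-m}.
\]

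Everything therefore reduces to controlling $n(m)$. From the recursions $N_{n+1} = \min\{2 N_n, M_n\}$ and $M_{n+1} = \lfloor 3 M_n / 2\rfloor + 1$, after an initial transient during which $N_n$ doubles, $N_n$ saturates against $M_n$ and thereafter grows like $(3/2)^n$, so $n(m) = \log m / \log(3/2) + O(1)$. Feeding this back,
\[
   \bar{\gamma}^{-n(m) \, m} \le C^m \exp\!\left( \frac{\log(1/\bar{\gamma})}{\log(3/2)} \, m \log m \right),
\]
which yields the claimed Gevrey bound with exponent $\alpha = \log(1/\bar{\gamma}) / \log(3/2)$. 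The threshold $\bar{\gamma}^5 < (2/3)^{5/4}$ imposed in Proposition~\ref{prop:KAM} is precisely the condition $\alpha > 5/4$, and any prescribed $\alpha > 5/4$ is obtained by choosing $\gamma_0 = \gamma_1 = \gamma_2$ with $\bar{\gamma}$ sufficiently close to $(2/3)^{5/4}$. The one delicate step is verifying the asymptotic $N_n \sim (3/2)^n$: I would bound the transient phase in terms of $M_0$, show that the $O(1)$ correction in $n(m)$ contributes only a factor $\bar{\gamma}^{-O(m)}$, and absorb it into $C^m$ (equivalently, into the constants $c_1, c_2$). This is the sole point where care is required; the rest is bookkeeping with estimates already established.
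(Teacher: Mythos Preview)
Your argument is correct and follows essentially the same route as the paper: termwise stabilization from \eqref{eq:Delta-order}, Cauchy-type coefficient extraction against the uniform bound $\|\phiu{n}\|_{\rhou{n}} \le C_0$, the asymptotic $N_n \sim (3/2)^n$ after a transient (the paper argues this by showing $N_{n+1}$ eventually equals $M_n$ and then tracking $M_n$), and the identification $\alpha = \log(1/\bar\gamma)/\log(3/2)$. One small slip: the constraint from Proposition~\ref{prop:KAM} is $\bar\gamma < (2/3)^{5/4}$ (equivalently $\gamma_0^5 < (2/3)^{5/4}$), not $\bar\gamma^{\,5} < (2/3)^{5/4}$; your conclusion $\alpha > 5/4$ is nevertheless correct.
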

\begin{proof}
  Because of \eqref{eq:Delta-order}, both series $\qu{n}$ and $\phiu{n}$ stabilizes and converges term-wise to a limit series. Let us first show that the sequence $M_n$ and $N_{n+1}$ in \eqref{eq:Delta-order} eventually coincide. Indeed, let $n_0 = \inf \{ n \st M_n \le 2N_n \}$, then for all $n \le n_0$, we have
  \[
	N_n = 2 N_{n-1}, \quad M_n = \floor{3M_{n-1}/2} + 1. 
  \]
  It follows that $n_0 < \infty$ since the contrary will lead to a contradiction. In the sequel, we will only deal with the sequence $M_n$.  

   For $k > M_0$, let $n \in \N$ be the unique integer such that 
  \[
	M_{n} < k \le M_{n+1}. 
  \]
  Using the defnition of $M_n$, it's easy to see that there exists $c_1 > 1$ such that
  \[
	\frac{1}{c_1} \left( \frac32\right)^n < M_n < \left( \frac32\right)^n, 
  \]
  therefore
  \[
	\frac{1}{c_1} \left( \frac32\right)^n < k \le \left( \frac32\right)^{n+1}. 
  \]
  In particular, we have $n < \frac{ \log c_1 + \log k}{\log(3/2)}$.

  It follows from Proposition~\ref{prop:iterative} that 
  \[
	\|\qu{n}\|_{\rhou{n}} \le C_0
  \]
  for all $n \ge 0$. Due to \eqref{eq:Delta-order}, we have
  \[
	  |q^\infty_k|  = |(\qu{n})_k| \le \|\qu{n}\|_{\rhou{n}} \left( \rhou{n}\right)^{-k} \le C_0 \left( \rhou{n}\right)^{-k}.
  \]
  Note that 
  \[
	\log \epsu{n} = (3/2) \log \epsu{n-1} = (3/2)^{n} \log \epsu{0} \le c_1 k\log \epsu{0},  
  \]
  and 
  \[
	(\rhou{n})^{-k} = (\rhou{0})^{-k} \bar{\gamma}^{-nk}
	=  (\rhou{0})^{-k} e^{\log(1/\bar{\gamma}) k\frac{\log c_1 + \log k}{\log(3/2)}}
	\le e^{k \log c_2} e^{\alpha k \log k}, 
  \]
  for some constant $c_2 (\rho[0],\bar{\gamma},c_1) > 0$ and
  \[
	\alpha = \frac{\log(1/\bar{\gamma})}{\log(3/2)}.
  \]
  Since we can choose any $\bar{\gamma}$ that satisfies $\bar{\gamma}^2 < (2/3)^{\frac{5}{2}}$, a simple calculation shows we can pick any $\alpha > \frac{5}{4}$. The estimate for $\qu{\infty}$ follows.
The same calculation applies to $\phiu{\infty}$, with the same exponent $\alpha$, but with possibly different constants. 
\end{proof}

\appendix

\section{Solving for $\Delta q$}
\label{sec:delta-q}

In this section we prove Proposition~\ref{prop:bound-Delta-q}. We will be essentially estimating a weighted norm of a lower triangular matrix. Given power series $q$,  $\varphi$, recall that $\xi = (\varphi^- - \varphi)/2$, $\eta = (\varphi^- + \varphi)/2$, $p(t) = \cos(t)$. Let us write, for all $k \ge 2$,
\[
  [\zeta^{2k} p(\zeta)] = \sum_{j = k}^\infty P_{j, k} (z \bar{z})^j, \quad 
  [S_q] = \sum_{j = 2}^\infty [S_q]_{2j} (z \bar{z})^j. 
\]

\begin{lemma}\label{lem:Delta-q-matrix}
Suppose for some $C_0 > 0$ and $\rho_0 > 0$, 
\[
  \|q\|_{\rho_0, 3}, \|\varphi\|_{\rho_0, 3} < C_0. 
\]
Let $C_1 =2C_0$, Then 
\[
 \begin{aligned}
   |P_{jk}| & \ge \frac{1}{\sqrt{2\pi}} \frac{\mu^{2j}}{\sqrt{j}}, 	& \text{if } j = k, \\
   |P_{jk}| &  \le C_1 k \mu^{2k} (1 + C_1 \rho^2)^{2k} \rho^{2k - 2j + 2},  &\text{if }  j > k.  
 \end{aligned} 
\]
\end{lemma}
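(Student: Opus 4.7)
The plan is to split the two inequalities. Writing $u = \eta = (\varphi^-+\varphi)/2$ and $v = \xi = (\varphi^--\varphi)/2$, so that $P_{j,k}$ is the $(z\bar z)^j$-coefficient of $[u^{2k}p(v)]$, let
\[
u_0 = \tfrac12\bigl((\lambda^{-1}+1)z+(\lambda+1)\bar z\bigr),
\]
so that $u-u_0 = \tfrac12(\varphi+\varphi^- - \varphi^{(0)} - (\varphi^{(0)})^-) = O_3$ and $v = O_1$ with $v = O_1 + O_3$. The basic identity is $(\lambda^{-1}+1)(\lambda+1) = |1+\lambda|^2 = \mu^2$.

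\textbf{Diagonal bound ($j=k$).} Since $u-u_0=O_3$, the binomial expansion gives $u^{2j} = u_0^{2j} + \sum_{m\ge 1}\binom{2j}{m}u_0^{2j-m}(u-u_0)^m$, whose $m\ge 1$ terms all have degree $\ge 2j+2m\ge 2j+2$. Also $p(v)=1+O_2$ contributes only terms of degree $\ge 2k+2$ when multiplied with $u^{2k}$. Hence $[u^{2j}p(v)]_{2j} = [u_0^{2j}]_{2j}$, and an explicit multinomial computation gives
\[
P_{j,j} \;=\; \binom{2j}{j}\tfrac{(\lambda^{-1}+1)^j(\lambda+1)^j}{4^j} \;=\; \binom{2j}{j}\,\frac{\mu^{2j}}{4^j}.
\]
The elementary Stirling lower bound $\binom{2j}{j}\ge 4^j/\sqrt{2\pi j}$ (valid for all $j\ge 1$) then yields $|P_{j,j}|\ge \mu^{2j}/\sqrt{2\pi j}$, as claimed.

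\textbf{Off-diagonal bound ($j>k$).} The key trick is to subtract the ``leading'' polynomial $u_0^{2k}$, whose average $[u_0^{2k}]$ is supported only in degree $2k$ and so contributes only to $P_{k,k}$: for $j>k$ we have $P_{j,k}(z\bar z)^j = [u^{2k}p(v) - u_0^{2k}]_{2j}$. Write
\[
u^{2k}p(v) - u_0^{2k} \;=\; (u^{2k}-u_0^{2k}) + u^{2k}\bigl(p(v)-1\bigr).
\]
For the first term use the telescoping identity $u^{2k}-u_0^{2k} = (u-u_0)\sum_{i=0}^{2k-1}u^i u_0^{2k-1-i}$ together with the norm estimates (items 1, 2, 5 of Lemma~\ref{lem:norm-properties}): $\|u_0\|_\rho = \mu\rho$, $\|u\|_\rho\le \mu\rho(1+C_1\rho^2)$, and $\|u-u_0\|_\rho\le C_1\rho^3$ (since $u-u_0$ has vanishing jet of order $\le 2$). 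This yields
\[
\|u^{2k}-u_0^{2k}\|_\rho \;\le\; 2k\cdot C_1\rho^3\cdot \bigl(\mu\rho(1+C_1\rho^2)\bigr)^{2k-1}.
\]
For the second term, since $p(v)-1 = O_2$ one gets $\|p(v)-1\|_\rho \le C_1\rho^2$ in the same way, hence
\[
\|u^{2k}(p(v)-1)\|_\rho \;\le\; C_1\rho^2\cdot \bigl(\mu\rho(1+C_1\rho^2)\bigr)^{2k}.
\]
Adding and absorbing constants into $C_1$,
\[
\|u^{2k}p(v)-u_0^{2k}\|_\rho \;\le\; C_1\,k\,\mu^{2k}(1+C_1\rho^2)^{2k}\rho^{2k+2},
\]
and item 1 of Lemma~\ref{lem:norm-properties} gives $|P_{j,k}|\le \rho^{-2j}\|u^{2k}p(v)-u_0^{2k}\|_\rho$, which is precisely the stated bound.

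The only genuine subtlety is the subtraction step $[u^{2k}p(v)] - P_{k,k}(z\bar z)^k = [u^{2k}p(v) - u_0^{2k}]$; all other estimates are routine applications of the majorant/Cauchy machinery in Lemma~\ref{lem:norm-properties}. The main bookkeeping annoyance is keeping the single constant $C_1 = 2C_0$ (and not a step-dependent $C_1$) consistent across the telescoping, the bound on $\|p(v)-1\|_\rho$, and the Stirling inequality, which is why one chooses $C_1$ to be twice as large as strictly needed (cf.\ Lemma~\ref{lem:basic-est}, item 1).
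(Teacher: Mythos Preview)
Your proof is correct and follows essentially the same approach as the paper: you use the identical decomposition $u^{2k}p(v)-u_0^{2k}=(u^{2k}-u_0^{2k})+u^{2k}(p(v)-1)$, bound the first piece by the telescoping sum and the second via $p(v)-1=O_2$, then read off the coefficient by item~1 of Lemma~\ref{lem:norm-properties}. Your diagonal argument is in fact slightly more explicit than the paper's, which simply cites the formula for $P_{j,j}$ from the proof of Proposition~\ref{prop:outer-eq} and invokes Stirling.
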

For all the proofs in this section, $f \lesssim g$ stands for $f \le C g$ for a constant $C > 0$ depending only on $C_0$ and the constant $c$ in the Diophantine condition \eqref{eq:diophantine}. 
\begin{proof}
  We have shown in the proof of Proposition \ref{lem:S-zero-mean} that 
  \[
	P_{j, j} = 2^{-2j} \binom{2j}{j} (\lambda^{-1} + 1)^j (\lambda + 1)^j. 
  \]
  Note that $|\lambda^{-1} + 1| = |\lambda + 1| = \mu$. Applying  Sterling's formula $\displaystyle n! = \sqrt{2\pi n} (\frac{n}{e})^n e^{\frac{\theta}{12n}}, 0<\theta<1$ to the term $\binom{2j}{j}=\frac{(2j)!}{(j!)^2}$, one sees that
  \[
	|P_{j, j}| \ge \frac{1}{\sqrt{2\pi}} \frac{\mu^{2j}}{\sqrt{j}},
  \]

  For the upper bound, we apply Lemma \ref{lem:basic-est} to get $\|\xi\|_\rho \le \mu \rho(1 + C_1 \rho^2)$. Let $\xi_0 = \frac12((\varphi^{(0)})^- + \varphi^{(0)})$. Observe that $\xi_0^{2k}$ only has terms of degree $2k$ and does not contribute to the $j, j$ coefficient when $j \ge k + 1$. We then write
  \[
	\xi^{2k} p(\zeta) - \xi_0^{2k} = (\xi^{2k} - \xi_0^{2k}) + \xi^{2k}(p(\zeta) - 1). 
  \]
  The term $\xi^{2k}$ has degree $2k$, hence does not contribute to the $j, j$ coefficient when $j \ge k + 1$. For the first term, according to Lemma \ref{lem:norm-properties} item 1, item 2 and item 5, we have that 
  \[
	\begin{aligned}
\| (\xi - \xi_0) \sum_{l = 0}^{2k - 1} \xi_0^l \xi^{2k - l - 1}\|_\rho 
& \lesssim \rho^3 \|(\xi - \xi_0) \|_{\rho} (2k)\| \sup_{0\leq l\leq 2k-1} \|\xi_0^{l}\xi^{2k-1-l}\|_{\rho}\\
& \lesssim 2k\rho^3\|(\xi - \xi_0) \|_{\rho}\sup_{0\leq l\leq 2k-1}(\mu \rho)^l(1-c\rho^2)^{2k-1-l}\\
& \lesssim  (k) \rho^3\mu^{2k - 1}\rho^{2k-1}(1 + C_1 \rho^2)^{2k - 1} \\
& \le    k \mu^{2k - 1} \rho^{2k + 2} (1 + C_1 \rho^2)^{2k - 1}.
	\end{aligned}
  \]
  For the second term, we note that $p(\zeta) - 1 = \zeta^2 \frac{p(\zeta) - 1}{\zeta^2}$. Given that $\frac{p(t) - 1}{t^2}$ is an entire function, we have
  \[
	\|p(\zeta) - 1\|_\rho \le \rho^2 \|(p(\zeta) - 1)\|_\rho \lesssim \rho^2.
  \]
  Hence
  \[
	\|\xi^{2k}(p(\zeta) - 1)\|_\rho \lesssim \rho^2 \cdot \mu^{2k} \rho^{2k} (1 + C_1 \rho^2)^{2k} = \mu^{2k} \rho^{2k + 2} (1 + C_1 \rho^2)^{2k}.
  \]
  Combine the estimates, we get 
  \[
	\|\xi^{2k}p(\zeta) - \xi_0^{2k}\|_\rho \lesssim k \mu^{2k} \rho^{2k + 2} (1 + C_1 \rho^2)^{2k} 
  \]
  and due to Lemma \ref{lem:norm-properties} item 1, 
  \[
	|P_{jk}| = |[\xi^{2k}p(\zeta) - \xi_0^{2k}]_{j,j}| \lesssim \rho^{-2j}k \mu^{2k} \rho^{2k + 2} (1 + C_1 \rho^2)^{2k}= k \mu^{2k} \rho^{2k -2j+ 2} (1 + C_1 \rho^2)^{2k}.
  \]
\end{proof}

Denote
\[
  T_M = \begin{bmatrix}
	P_{2, 2} & 0 &  \cdots & 0  \\
	\vdots & \ddots & \ddots & \vdots \\  
	\vdots & \ddots & \ddots & 0 \\  
	P_{M, 2} & \cdots & \cdots & P_{M, M} 
  \end{bmatrix} , 
\]
and 
\[
  \bv = \bmat{\eta_4 \\ \eta_6 \\ \vdots \\ \eta_{2M}}, 
  \quad
  \bw = - \bmat{[S_q]_4 \\ [S_q]_6 \\ \vdots \\ [S_q]_{2M}}, 
\]
then 
\[
  \Lambda_{2M}\left( [S_{q + \Delta q}] - 1\right) = 0
\]
with $\Delta q = \sum_{k = 2}^{M} \eta_{2k} t^{2k}$ if and only if 
\[
  T_M \bv = \bw. 
\]

For $r > 0$, denote 
\[
  \Gamma_r = \diag\{r^4, r^6, \cdots, r^{2M}\}. 
\]

\begin{lemma}
Given $\rho_1, \rho_2 > 0$, let $\|\cdot\|_1$ be the operator norm induced by the standard 1-norm, we have that
\[
  \|\Lambda_{2M}(\Delta q) \|_{\rho_2} \le \| \Gamma_{\rho_2} T_M^{-1} \Gamma_{\rho_1}^{-1}\|_1 \cdot \|\Lambda_{2M}[S_q] - 1\|_{\rho_1}. 
\]
\end{lemma}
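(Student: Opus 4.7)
The plan is to reduce everything to a straightforward calculation with the standard vector 1-norm, once one observes that the norms $\|\cdot\|_\rho$ on truncated power series are exactly the weighted 1-norms realized by the diagonal matrices $\Gamma_{\rho_1}$ and $\Gamma_{\rho_2}$. Specifically, since $\Lambda_{2M}(\Delta q) = \sum_{k=2}^M \eta_{2k} t^{2k}$, by definition of $\|\cdot\|_\rho$ we have
\[
  \|\Lambda_{2M}(\Delta q)\|_{\rho_2} = \sum_{k=2}^M |\eta_{2k}| \rho_2^{2k} = \|\Gamma_{\rho_2} \mathbf{v}\|_1,
\]
and the same identification (applied to the components of $\mathbf{w}$, noting that $[S_q] - 1$ has no constant or quadratic part) yields
\[
  \|\Lambda_{2M}[S_q] - 1\|_{\rho_1} = \sum_{k=2}^M |[S_q]_{2k}| \rho_1^{2k} = \|\Gamma_{\rho_1} \mathbf{w}\|_1.
\]

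Next, I would invoke the matrix equation $T_M \mathbf{v} = \mathbf{w}$ established in the preceding paragraph, which is legitimate because $T_M$ is lower triangular with nonzero diagonal entries $P_{j,j}$ (by Lemma~\ref{lem:Delta-q-matrix}), hence invertible. Solving gives $\mathbf{v} = T_M^{-1} \mathbf{w}$, so
\[
  \Gamma_{\rho_2} \mathbf{v} = \Gamma_{\rho_2} T_M^{-1} \Gamma_{\rho_1}^{-1} \bigl( \Gamma_{\rho_1} \mathbf{w}\bigr).
\]
Taking the vector 1-norm of both sides and using the submultiplicativity of the operator norm induced by $\|\cdot\|_1$, we obtain
\[
  \|\Gamma_{\rho_2} \mathbf{v}\|_1 \le \|\Gamma_{\rho_2} T_M^{-1} \Gamma_{\rho_1}^{-1}\|_1 \cdot \|\Gamma_{\rho_1} \mathbf{w}\|_1,
\]
which, after substituting the two norm identifications above, is exactly the claimed inequality.

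There is no serious obstacle here; the statement is essentially a change-of-variables lemma that repackages the weighted analytic norm as a weighted 1-norm, thereby reducing the problem of bounding $\|\Delta q\|_{\rho_2}$ to bounding the matrix norm $\|\Gamma_{\rho_2} T_M^{-1} \Gamma_{\rho_1}^{-1}\|_1$. The genuine work, i.e., estimating this matrix norm by exploiting the lower-triangular structure and the bounds on $P_{j,j}$ versus $P_{j,k}$ with $j > k$ from Lemma~\ref{lem:Delta-q-matrix}, is deferred to subsequent lemmas in the appendix. The only micro-issue to verify carefully in writing out this proof is the correct alignment of indices between the sums $\sum_{k=2}^M$, the rows/columns of $T_M$, and the diagonal entries of $\Gamma_{\rho_i}$, so that the products of matrices and vectors really do correspond termwise to the series identities.
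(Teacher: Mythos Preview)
Your proof is correct and follows essentially the same route as the paper: identify $\|\Lambda_{2M}(\Delta q)\|_{\rho_2}=\|\Gamma_{\rho_2}\mathbf{v}\|_1$ and $\|\Lambda_{2M}[S_q]-1\|_{\rho_1}=\|\Gamma_{\rho_1}\mathbf{w}\|_1$, then insert $\Gamma_{\rho_1}^{-1}\Gamma_{\rho_1}$ and apply the operator-norm inequality. The only difference is that you make explicit the invertibility of $T_M$ and the index alignment, which the paper leaves implicit.
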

\begin{proof}
Note that
\[
  \|\Lambda_{2M}(\Delta q)\|_{\rho_2}
  = \sum_{k = 2}^M |\eta_{2k}| \rho^{2k} 
  = \|\Gamma_{\rho_2} \bv\|_1,
\]
\[
  \|\Lambda_{2M}([S_q] - 1)\|_{\rho_1}
  = \|\Gamma_{\rho_1} \bw \|_1. 
\]
Therefore
\[
  \begin{aligned}
  \|\Lambda_{2M}(\Delta q)\|_{\rho_2}
  & =  \|\Gamma_{\rho_2} \bv\|_1 
  = \|\Gamma_{\rho_2} T_M^{-1} \bw\|_1
  = \|\Gamma_{\rho_2} T_M^{-1} \Gamma_{\rho_1}^{-1} \Gamma_{\rho_1} \bw\|_1 \\
  & \le \| \Gamma_{\rho_2} T_M^{-1} \Gamma_{\rho_1}^{-1}\|_1 
  \|\Gamma_{\rho_1} \bw\|_1 
  = \| \Gamma_{\rho_2} T_M^{-1} \Gamma_{\rho_1}^{-1}\|_1 \cdot \|\Lambda_{2M}[S_q] - 1\|_{\rho_1}. 
  \end{aligned}
\]
\end{proof}

Finally, Proposition~\ref{prop:bound-Delta-q} follows from:
\begin{lemma}
  Under the assumption of Lemma~\ref{lem:Delta-q-matrix} and Proposition~\ref{prop:bound-Delta-q}, there existss $C_2 > 0$ such that for 
  \[
	\rho_1 = \frac{\rho}{(1 + C_1 \rho^2)^M}, \quad 
	  \rho_2 = \mu\rho_1, 
  \]
  we have
  \[
	\| \Gamma_{\rho_2} T_M^{-1} \Gamma_{\rho_1}^{-1}\|_1  \le 2C_2 \sqrt{M}.
  \]
\end{lemma}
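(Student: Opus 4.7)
The plan is to pass to the inverse matrix $B := \Gamma_{\rho_1} T_M \Gamma_{\rho_2}^{-1}$ and estimate $\|B^{-1}\|_1$ by a Neumann-type expansion. Specifically, I would decompose $B = D + N$ where $D$ is the diagonal part and $N$ is strictly lower triangular, write $B = D(I + D^{-1}N)$, and observe that since $N$ is nilpotent (of index at most $M-1$), the series $(I + D^{-1}N)^{-1} = \sum_{l=0}^{M-2}(-D^{-1}N)^{l}$ is automatically finite. The whole task then reduces to proving $\|D^{-1}\|_1 \lesssim \sqrt{M}$ and $\|D^{-1}N\|_1 < 1/2$, after which $\|B^{-1}\|_1 \le 2 \|D^{-1}\|_1$ gives the desired conclusion.

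First I would unpack the entries $B_{jk} = \rho_1^{2j} \rho_2^{-2k} P_{jk}$. Using $\rho_2 = \mu\rho_1$ and $\rho_1^{2j-2k} = \rho^{2j-2k}(1+C_1\rho^2)^{-2M(j-k)}$, the estimates of Lemma~\ref{lem:Delta-q-matrix} translate into
\begin{align*}
  |B_{jj}|  & \ge \frac{1}{\sqrt{2\pi j}}, \\
  |B_{jk}|  & \le C_1 k\, \rho^2\, (1+C_1\rho^2)^{2k - 2M(j-k)}, \qquad j > k.
\end{align*}
The crux is to observe that the exponent $2k - 2M(j-k)$ is non-positive for every pair $j > k$ with $j,k \in \{2,\ldots,M\}$ (the worst case being $k=M-1$, $j=M$, which gives $-2$). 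Hence the spurious $(1+C_1\rho^2)^{2k}$ growth in $P_{jk}$ is exactly absorbed by the shrinkage factor $(1+C_1\rho^2)^{-2M(j-k)}$ built into the definition of $\rho_1$, leaving the uniform off-diagonal bound $|B_{jk}| \le C_1 M \rho^2$.

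Since the operator norm induced by the $\ell^1$ vector norm is the maximum column absolute sum, a direct column-by-column estimate gives
\[
  \|D^{-1}\|_1 \le \sqrt{2\pi M}, \qquad
  \|D^{-1}N\|_1 \le (M-1) \cdot C_1 M \rho^2 \cdot \sqrt{2\pi M} \le C_1 \sqrt{2\pi}\, M^{5/2} \rho^2.
\]
The hypothesis \eqref{eq:M52rho2}, namely $C_3 M^{5/2} \rho^2 < 1$, is tailored to this estimate: by choosing $C_3 \ge 2\sqrt{2\pi}\, C_1$ we obtain $\|D^{-1}N\|_1 \le 1/2$. The Neumann series then yields $\|(I + D^{-1}N)^{-1}\|_1 \le 2$, and multiplying through gives $\|B^{-1}\|_1 \le 2\sqrt{2\pi M}$, i.e.\ the claim with $C_2 = \sqrt{2\pi}$.

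The only nontrivial step is the verification that the off-diagonal factor $(1+C_1\rho^2)^{2k - 2M(j-k)}$ is uniformly bounded by $1$; this is precisely where the specific choice $\rho_1 = \rho/(1+C_1\rho^2)^{M}$ pays off, converting a potentially geometric blow-up into a harmless contraction. Everything else is routine finite-dimensional linear algebra for a lower triangular matrix with a diagonally dominant structure relative to the chosen weights.
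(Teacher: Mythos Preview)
Your proposal is correct and follows essentially the same route as the paper: conjugate to $B=\Gamma_{\rho_1}T_M\Gamma_{\rho_2}^{-1}$, split into diagonal plus strictly lower-triangular, bound the diagonal inverse by $\sqrt{2\pi M}$ and the off-diagonal column sums by $C M^{5/2}\rho^2$, and invoke \eqref{eq:M52rho2} to sum the Neumann series. The only cosmetic differences are that you observe the series is actually a finite sum (nilpotence), and you bound the exponent $2k-2M(j-k)$ directly rather than first relaxing $2k$ to $2M$ as the paper does; neither changes the argument.
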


\begin{proof}
Denote
\[
  \tilde{T}_M = \Gamma_{\rho_1} T_M \Gamma_{\rho_2} = (\tilde{P}_{jk})_{2 \le j, k \le M}. 
\]
Note that for consistency of notations, we are starting the index at $2$. We have
\[
  \tilde{P}_{j, k} = \rho_1^{2j} P_{j, k} \rho_2^{-2k}, 
\]
hence
\[
  |\tilde{P}_{j, j}| = |\rho_1^{j}P_{j,j}(\mu\rho)^{-2j}| = |\mu^{-2j}P_{j,j}|\geq |\mu^{-2j} \frac{1}{\sqrt{2\pi}}\frac{\mu^{2j}}{\sqrt{j}} |= \frac{(2\pi)^{-\frac{1}{2}}}{\sqrt{j}}. 
\]
For $j - k \ge 1$, $k\leq M$, we have that
\[
\begin{aligned}
  |\tilde{P}_{j, k}| & \lesssim M \rho_1^{2j} \mu^{2k} (1 + C_1 \rho^2)^{2k} \rho^{2k-2j+2}\mu^{-2k} \rho_1^{-2k} \\
					 & \le M (1 + C_1 \rho^2)^{2M}\rho^{2} (1 + C_1 \rho^2)^{-(2j - 2k)M}  \\
					 & \le M \rho^2 (1 + C_1 \rho^2)^{2M(k - j + 1)}.
\end{aligned} 
\]

Write $\tilde{T}_M = \Gamma + N$, where $\Gamma$ is the diagonal part of $\tilde{T}_M$. Then formally, 
\[
  \tilde{T}_M^{-1} = (\Gamma + N)^{-1} 
  = (I + \Gamma^{-1} N)^{-1} \Gamma^{-1} = \sum_{k = 0}^\infty (\Gamma^{-1} N)^k \Gamma^{-1}. 
\]
Since
\[
\begin{aligned}
  \|\Gamma^{-1} N\|_1 &
  \le \max_{k} \sum_{j = k+1}^M |P_{jj}^{-1} P_{jk}| 
  = \max_k \sum_{j = k + 1}^M  M \sqrt{j} \rho^2 (1 + C_1 \rho^2)^{2M(k - j + 1)} \\
					  & \le \max_k \sum_{j = k+1}^MM\sqrt{j}\rho^2  \\
					  &\leq \sum_{j=1}^{M}M\sqrt{M}\rho^2 = M^{\frac{5}{2}}\rho^2 
\end{aligned}
\]
Hence the condition \eqref{eq:M52rho2} implies that $\|\Gamma^{-1} N\|_1 < \frac12$, which in turn leads to
\[
  \|\tilde{T}_M^{-1}\|_1 \lesssim \|\Gamma^{-1}\|_1 \lesssim \sqrt{M}. 
\]
\end{proof}

\bibliographystyle{alpha}
\bibliography{linearizable-billiard}

\end{document}